\theoremstyle{plain}
\newtheorem{theorem}{Theorem}
\newtheorem{corollary}{Corollary}
\newtheorem{proposition}{Proposition}
\newtheorem{lemma}{Lemma}
\theoremstyle{definition}
\newtheorem{definition}{Definition}
\newtheorem{remark}{Remark}
\newtheorem*{ack}{Acknowledgements}
\begin{document}

\title[Random substitution tilings and deviation phenomena]
      {Random substitution tilings and deviation phenomena}
\author[S. Schmieding]{Scott Schmieding}
\address{Northwestern University}
\email{schmiedi@math.northwestern.edu}
\author[R. Trevi\~{n}o]{Rodrigo Trevi\~no}
\address{University of Maryland}
\email{rodrigo@math.umd.edu}

\begin{abstract}
  Suppose a set of prototiles allows $N$ different substitution rules. In this paper we study tilings of $\mathbb{R}^d$ constructed from random application of the substitution rules. The space of all possible tilings obtained from all possible combinations of these substitutions is the union of all possible tilings spaces coming from these substitutions and has the structure of a Cantor set. The renormalization cocycle on the cohomology bundle over this space determines the statistical properties of the tilings through its Lyapunov spectrum by controlling the deviation of ergodic averages of the $\mathbb{R}^d$ action on the tiling spaces.
\end{abstract}
\maketitle

\section{Introduction}
In this paper we study tilings which are generated by random combinations of substitutions using a finite family of substitution rules. This generalizes the constructions and results known for self-similar tilings, which are tilings constructed from a single substitution rule. As an example to keep in mind, consider the two substitution rules defined for the following triangles:
\begin{figure}[h]
  \centering
  \includegraphics{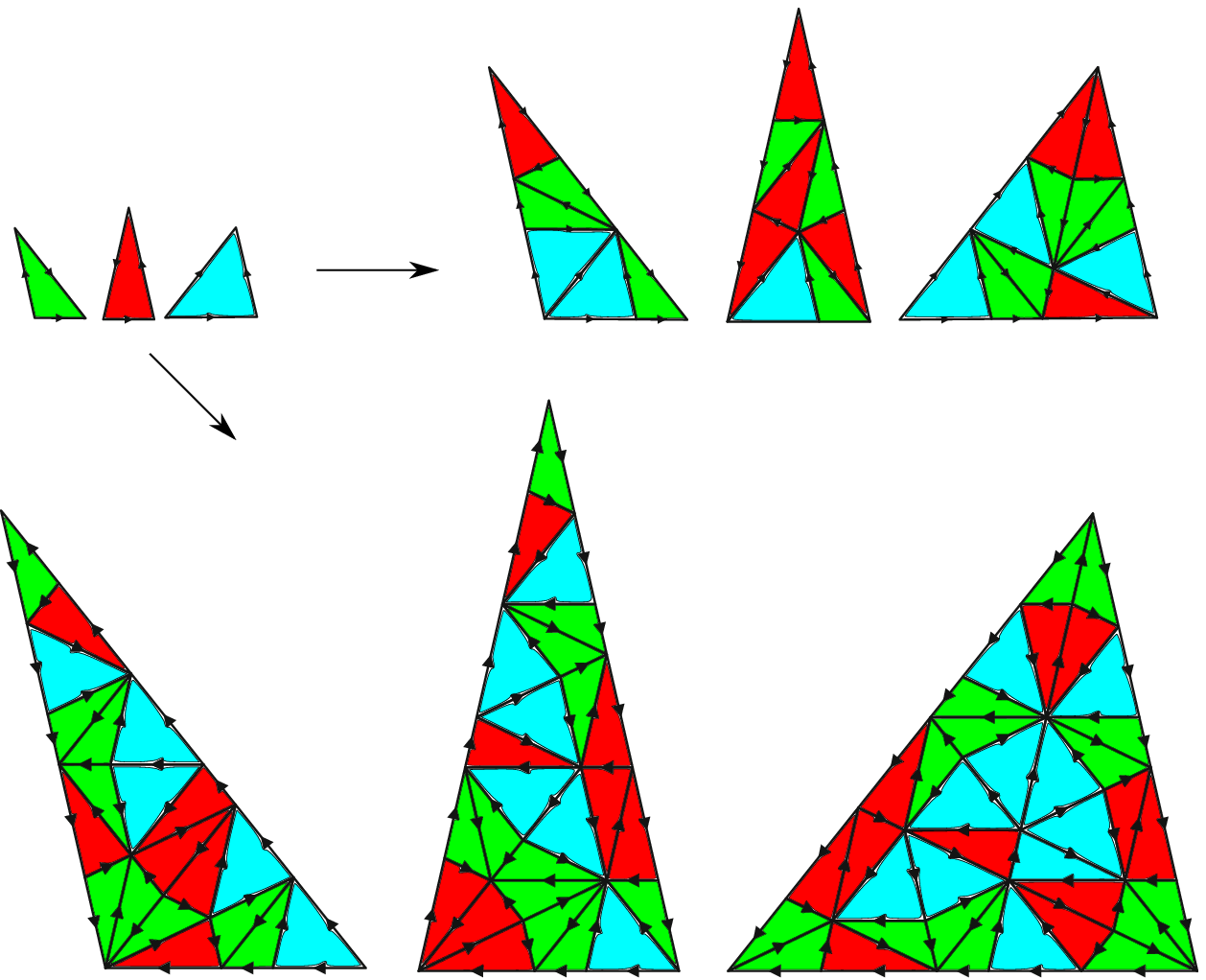}
\end{figure}

We first point out that the two substitutions are in fact different; it is not the case that one is the power of another. Moreover, their expansion constants are not related by a power. These two substitution rules were discovered in \cite{GKM:computer}.

The general procedure for constructing self-similar tilings from a single substitution rule can be roughly described as follows: start with a single tile, apply the substitution rule and rescale the tiled polygon so that the tiles in the polygon are isometric copies of the tiles on which the substitution rule is defined. Doing this infinitely many times and carefully taking a limit, one obtains a self-similar tiling.

Now, for the triangles in the figure above, suppose that instead of using a single substitution rule to build a tiling one applies a sequence of substitutions randomly chosen from among the two substitutions given above to construct a tiling. Suppose $\mathcal{T}_x$ and $\mathcal{T}_{x'}$ are two different tilings constructed from two sequences $x\neq x'\in \{1,2\}^\mathbb{N}$, the entries of which determine the order in which we apply either substitution rule. We may ask:
\begin{enumerate}
\item How are $\mathcal{T}_x$ and $\mathcal{T}_{x'}$ related?
\item How are the respective tiling spaces of $\mathcal{T}_x$ and $\mathcal{T}_{x'}$ related?
\item Will the dynamics defined by $\mathcal{T}_x$ and $\mathcal{T}_{x'}$ be conjugate?
\item What determines the statistical properties of the two tilings $\mathcal{T}_x$ and $\mathcal{T}_{x'}$ such as asymptotic patch frequency?
\end{enumerate}
The answers to these questions in the self-similar case are well-known to be related to the geometry and combinatorics of the substitution rule. In this paper we show that what determines the answers to these and other questions are the ergodic shift-invariant measures on $\{1,2\}^\mathbb{N}$, the typical points $x,x'$ of which which we take to construct tilings.

Our construction of tilings using graph iterated function systems is inspired by the blowup construction of Barnsley and Vince \cite{BarnsleyVince:blowup} but we make use of Bratteli diagrams to organize and give structure to all of the possible combinations of substitutions we may use. The use of Bratteli diagrams in the study of tilings goes back several decades, see e.g. \cite{kellendonk:gap, BJS:brat1, JS:bratII}. Our use of Bratteli diagrams can particularly be seen as a non-stationary version of those used in \cite{kellendonk:gap}. Our formalism using Bratteli diagrams also has many parallels to the fusion theory of Priebe-Frank and Sadun \cite{FrankSadun:Fusion}.

There have been other works where random substitutions have been investigated \cite{GM:mixed, BD:sadic, Rust:uncountable, RS:random} but most of the results in those are one-dimensional in nature. A difficulty which arises in the case of higher dimensional tilings, which is of independent interest in itself, is whether a given set of tiles admits more than one substitution rule. The results of \cite{GKM:computer} indicate that although this is a hard question in general, one can find plenty of interesting examples by considering triangles with angles which are integer multiples of $\pi/n$ for most $n>4$. The figure above is one of many examples found in \cite{GKM:computer}.

Our approach here is the one adopted in the study of translation flows in Teichm\"uller dynamics. To summarize, given a finite set of substitution rules we can consider all possible tiling spaces which can be constructed from these subsitution rules. This serves as a sort of ``moduli space'' of tiling spaces coming from a given family of substitutions on the same set of tiles. There is a dynamical system on this moduli space and the dynamics on this moduli space determine many of the properties of the tilings constructed. The dynamics on the moduli space are known as \textbf{renormalization dynamics}.

Tiling spaces associated to aperiodic tilings are foliated spaces which are not manifolds; instead, they are locally the product of a manifold with a Cantor set. As such, in contrast to the situation in Teichm\"uller dynamics, there is no Hodge theory for tiling spaces, so we have to come up with some components which are missing in the context of tiling spaces, such as a useful norm on the relevant cohomology bundle. Moreover, it is not clear whether there are Sobolev spaces where de Rham regularization yields an isomorphism between finite-dimensional smooth cohomology and any type of finite-dimensional Sobolev cohomology, so analytic approaches using Sobolev norms (e.g. \cite{forni:deviation}, \cite{FF:horocycle}, \cite{CF:equidistr}) are not clearly applicable in this setting.

A thorough study of the moduli spaces defined by families of substitutions falls outside the scope of the present paper, and the investigation of such moduli spaces is a topic we plan pursue in future work. Thus, while we do not explicitly call anything in the paper an actual moduli space, the reader familiar with dynamics on moduli spaces will recognize our use of the shift on $\Sigma_N$ as the dynamics on moduli space where the action of some mapping class group of a tiling space acts through a map induced by the shift.

Our blowup construction through graph iterated function systems to construct tilings and tiling spaces is quite general. The restrictions we impose here allow us to obtain tilings and tiling spaces which have finite complexity (and, in addition, finite-dimensional cohomology). However, relaxing these restrictions may give tilings of several, even infinite, scales, as well as tilings of infinite complexity. Our hope is to extend the renormalization tools used in this paper to study the more general case of multiscale and tilings of infinite complexity.

As mentioned above, the constructions here generalize the construction of self-similar tilings, and our main result generalizes the results \cite{sadun:exact, BufetovSolomyak, ST:SA} to the context of not-self similar tilings. These types of results are not only illuminating in the study of tilings, but also are of interest to the mathematical physics community. Since mathematical tilings are taken as models for quasicrystals, the results here yield results about the convergence properties of diffraction measures for quasicrystals (see \cite{ST:SA}). In addition, these results also yield information about convergence properties in the Bellissard-Shubin formula for the integrated density of states for random Schrodinger operators on quasiperiodic media, as well as traces in the $*$-algebras of certain types of operators as in \cite{ST:traces}.


\subsection{Statement of results}
Suppose we have $N$ substitution rules $\mathcal{F}_1,\dots , \mathcal{F}_N$ on the same set of prototiles which satisfy certain conditions (see Definition \ref{def:typeH} in \S\ref{sec:blowups}). The assumptions guarantee that most tilings constructed from these substitution rules will have finite complexity. Given $x\in \{1,\dots, N\}^{\mathbb{Z}}$, we can construct a (bi-infinite) Bratteli diagram $\mathcal{B}_x$ which records a set of instructions used to create a tiling. A Bratteli diagram is an infinite directed graph partitioned into levels indexed by $\mathbb{Z}$ (Bratteli diagrams are defined in \S \ref{sec:gifs}), so the $k^{th}$ level of $\mathcal{B}_x$ is defined by $x_{k}$. We construct tilings from infinite paths in $\mathcal{B}_x$, and as long as $\mathcal{B}_x$ is connected enough, the collection of all such tilings gives a tiling space $\Omega_x$ with an action of $\mathbb{R}^d$ given by translations. We call such sufficiently connected diagrams $\mathcal{B}_x$ \emph{minimal} (minimal diagrams are defined in \S\ref{subsec:brat}), and minimal diagrams yield tiling spaces with minimal $\mathbb{R}^d$-actions. A shift-invariant measure on $\Sigma_N$ is minimal if $\mathcal{B}_x$ is minimal for $\mu$-almost every $x$.

The shift $\sigma:\Sigma_N\rightarrow\Sigma_N$ defines a homeomorphism $\Phi_x:\Omega_x\rightarrow \Omega_{\sigma(x)}$ which is a conjugacy between the translation actions in $\Omega_x$ and $\Omega_{\sigma(x)}$, respectively, and drives the renormalization dynamics (this is found in \S\ref{sec:biinf}). We define the cohomology bundle $\mathcal{H}_\mathcal{F}$ over $\Sigma_N$ where the fiber over $x$ is the vector space $H^d(\Omega_x;\mathbb{R})$ and the renormalization cocycle is the bundle map $(x,c)\mapsto (\sigma(x), (\Phi_x^{-1})^* c)$ over the shift $\sigma$.

As such, given a $\sigma$-invariant ergodic minimal measure $\mu$ {on $\Sigma_{N}$}, Oseledets theorem yields Lyapunov exponents $\lambda_1\geq \dots\geq \lambda_r$ which measure the exponential rate of growth of vectors in $H^d(\Omega_x;\mathbb{R})$ under the renormalization cocycle. The \emph{rapidly expanding subspace} $E^+_x\subset H^d(\Omega_x;\mathbb{R})$ corresponds to vectors with Lyapunov exponents $\lambda_i$ satisfying $d\lambda_i > (d-1)\lambda_1$. The functions whose ergodic integrals we study are the analogue of $C^\infty$ functions on manifolds, which are the \emph{transversally locally constant functions}, denoted $C^\infty_{tlc}(\Omega_x)$ (they are defined in \S\ref{subsec:AP}). For a set $B\subset \mathbb{R}^d$ we denote by $T\cdot B$ the rescaling of $B$ by $T>0$, that is, $T\cdot B = T\,\mathrm{Id}\,B$.
\begin{theorem}
\label{thm:main}
  Let $\mathcal{F} = \{\mathcal{F}_1,\dots, \mathcal{F}_N\}$ be a family of substitution rules satisfying the conditions of Definition \ref{def:typeH} in \S\ref{sec:biinf}. Let $\mu$ be a minimal $\sigma$-invariant ergodic probability measure on $\Sigma_N$, and let $\lambda_1\geq \cdots\geq  \lambda_{\rho}$ be the Lyapunov exponents for $\mu$ corresponding to vectors in $E^+_x$. Then for $\mu$-almost every $x\in\Sigma_N$, there are $\rho$ $\mathbb{R}^d$-invariant distributions $\mathcal{D}_1,\dots, \mathcal{D}_{\rho} \in C^\infty_{tlc}(\Omega_x)'$ such that for any $f\in C^\infty_{tlc}(\Omega_x)$, if $\mathcal{D}_i(f)=0$ for all $i<j\leq \rho$ and $\mathcal{D}_j(f)\neq 0$, for a good Lipschitz domain $B\subset \mathbb{R}^d$ and $\mathcal{T}\in\Omega_x$ we have that
  $$\limsup_{T\rightarrow\infty} \frac{\log \left| \displaystyle\int_{T\cdot B}f\circ\varphi_t(\mathcal{T})\, dt\right|}{\log T} \leq d\frac{\lambda_j}{\lambda_1}.$$
Moreover, for any $\varepsilon>0$ there exists a compact subset $B_\varepsilon$ which is $\varepsilon$-close in the Hausdorff metric to $B$, a convergent sequence of vectors $\tau_k\in \mathbb{R}^d$ and a sequence $T_k\rightarrow \infty$ such that
  $$\limsup_{k\rightarrow\infty} \frac{\log \left| \displaystyle\int_{T_k\cdot(\tau_k+B_\varepsilon)}f\circ\varphi_t(\mathcal{T})\, dt\right|}{\log T_k} \geq d\frac{\lambda_j}{\lambda_1}.$$
  Finally, if $\mathcal{D}_i(f)=0$ for all $i\leq \rho$, then
  $$\limsup_{T\rightarrow\infty} \frac{\log \left| \displaystyle\int_{T\cdot B}f\circ\varphi_t(\mathcal{T})\, dt\right|}{\log T} \leq d-1.$$
\end{theorem}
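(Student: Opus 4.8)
The plan is to recast the ergodic integrals cohomologically and read off their growth from the Oseledets filtration of the renormalization cocycle, in the spirit of \cite{sadun:exact,BufetovSolomyak,ST:SA} but with the substitution matrix replaced everywhere by the $n$-step renormalization cocycle $A^{(n)}(x)$, the $n$-fold composition of $c\mapsto(\Phi_x^{-1})^*c$ over $x,\sigma x,\dots,\sigma^{n-1}x$. \emph{First, a cohomological formula for ergodic integrals.} Realize $\Omega_x$ as the inverse limit of the finite CW approximants $\Gamma_{x,0},\Gamma_{x,1},\dots$ read off from the levels of $\mathcal{B}_x$ (as in \cite{kellendonk:gap,FrankSadun:Fusion}). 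Any $f\in C^\infty_{tlc}(\Omega_x)$ factors through some $\Gamma_{x,m}$, and the top-degree form $f\,d\mathrm{vol}$ determines a class $\mathcal{I}(f):=[f]\in H^d(\Omega_x;\mathbb{R})$; the map $\mathcal{I}$ is continuous and $\mathbb{R}^d$-invariant, since $\varphi_s^*(f\,d\mathrm{vol})$ differs from $f\,d\mathrm{vol}$ by an exact tlc form. On the approximants $A^{(n)}(x)$ is the non-stationary product of the degree-$d$ maps induced by $\mathcal{F}_{x_0},\dots,\mathcal{F}_{x_{n-1}}$, i.e.\ of the (transposes of the) substitution incidence matrices. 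Writing $\Lambda_n(x)=\lambda_{x_0}\cdots\lambda_{x_{n-1}}$ for the accumulated linear expansion, the decomposition of $T\cdot B$ into the level-$n$ supertiles of $\mathcal{T}$ it contains plus a one-supertile-thick collar along $\partial(T\cdot B)$, telescoped over the scales $n$ with $\Lambda_n(x)\le T$, should yield an estimate
$$\left|\int_{T\cdot B}f\circ\varphi_t(\mathcal{T})\,dt-\mathrm{vol}(T\cdot B)\,\mathcal{D}_1(f)\right|\ \le\ C(B)\,T^{d-1}\!\!\sum_{n:\ \Lambda_n(x)\le T}\!\!\Lambda_n(x)^{-(d-1)}\,\big\|A^{(n)}(x)\,\mathcal{I}(f)\big\|,$$
where $\mathcal{D}_1(f)$ is, up to normalization, $\int_{\Omega_x}f\,d\mu_x$; the good Lipschitz domain hypothesis on $B$ enters precisely in controlling the collar and the supertiles straddling $\partial(T\cdot B)$, and one also needs companion lower bounds on the individual summands (see below).

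\emph{Oseledets, the distributions, and the upper bounds.} Apply the Oseledets theorem to $A^{(n)}(x)$ restricted to $E^+_x$, obtaining the decreasing filtration $E^+_x=F_1(x)\supsetneq F_2(x)\supsetneq\cdots$ and a measurable $x$-adapted basis of $E^+_x$ whose $i$-th vector spans a line of exponent $\lambda_i$ (listed with multiplicity, non-increasing); set $\mathcal{D}_i:=(\,i\text{-th coordinate in the adapted basis}\,)\circ\mathcal{I}$, a continuous $\mathbb{R}^d$-invariant functional on $C^\infty_{tlc}(\Omega_x)$. If $\mathcal{D}_i(f)=0$ for $i<j$ and $\mathcal{D}_j(f)\ne 0$ with $j\ge 2$, then $\mathcal{D}_1(f)=0$ and $\mathcal{I}(f)$ has leading exponent $\lambda_j$, so $\tfrac1n\log\|A^{(n)}(x)\mathcal{I}(f)\|\to\lambda_j$ for $\mu$-a.e.\ $x$, while $\tfrac1n\log\Lambda_n(x)\to\bar\ell:=\int\log\lambda_{x_0}\,d\mu$ by Birkhoff and $d\bar\ell=\lambda_1$ (the frequency class is an eigenvector of each $A(x)$ with eigenvalue $\lambda_{x_0}^d$ and realizes the top exponent, by the Perron--Frobenius properties of the substitution cocycle guaranteed by Definition \ref{def:typeH}). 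Hence the $n$-th summand above has exponent $\tfrac1d\big(d\lambda_j-(d-1)\lambda_1\big)n+o(n)$; since $\lambda_j\in E^+$ this is positive, so the sum is dominated by $n\sim(\log T)/\bar\ell$ and equals $T^{\,d\lambda_j/\lambda_1-(d-1)+o(1)}$, and multiplying by the prefactor $T^{d-1}$ gives the first asserted bound. If instead $\mathcal{D}_i(f)=0$ for all $i\le\rho$, then $\mathcal{I}(f)$ lies in the slow part of the Oseledets decomposition, every summand is $e^{o(n)}$, the sum is $e^{o(\log T)}$, and the bound $\le d-1$ follows; the case $j=1$ is just the Birkhoff leading term $\mathrm{vol}(T\cdot B)\,\mathcal{D}_1(f)\asymp T^d$, consistent with $d\lambda_1/\lambda_1=d$.

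\emph{The lower bound.} Take $n_k\to\infty$ and $T_k\asymp\Lambda_{n_k}(x)$, so $T_k\to\infty$ and $e^{n_k\lambda_j}\asymp T_k^{\,d\lambda_j/\lambda_1}$, and by Oseledets $\|A^{(n_k)}(x)\mathcal{I}(f)\|\ge e^{(\lambda_j-\delta_k)n_k}$ with $\delta_k\to 0$. For such $T_k$ the region $T_k\cdot(\tau_k+B_\varepsilon)$ meets only boundedly many level-$n_k$ supertiles, and the formula above reduces (the Birkhoff term vanishing since $j\ge 2$) to $\int_{T_k\cdot(\tau_k+B_\varepsilon)}f\circ\varphi_t(\mathcal{T})\,dt=\big\langle c_k,\,A^{(n_k)}(x)\mathcal{I}(f)\big\rangle+O(T_k^{d-1})$, with $c_k$ the level-$n_k$ supertile count discrepancy of the region, a bounded integer vector. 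One then fixes a perturbation $B_\varepsilon$ of $B$ whose boundary is in general position relative to the finitely many supertile shapes --- which is why the given $B$ need not work verbatim --- and translations $\tau_k$ (convergent, obtained by locating all the needed supertile patterns near one point of $\mathcal{T}$ via minimality) so that $|\langle c_k, A^{(n_k)}(x)\mathcal{I}(f)\rangle|\gtrsim\|A^{(n_k)}(x)\mathcal{I}(f)\|$; since $d\lambda_j>(d-1)\lambda_1$ the $O(T_k^{d-1})$ error is of strictly smaller exponential order, which gives the asserted lower bound along $(\tau_k)$, $(T_k)$, $B_\varepsilon$.

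\emph{The main obstacle.} Once the cohomological formula and the lower-bound mechanism are available the Oseledets bookkeeping is routine, so the difficulty is concentrated in those two inputs. The formula requires constructing a workable norm on the cohomology bundle $\mathcal{H}_{\mathcal{F}}$ (there being no Hodge theory to invoke), identifying the renormalization cocycle with the substitution cocycle in degree $d$, and making the collar and straddling estimates uniform along $\mu$-generic orbits --- this last point, where the subexponential errors in Oseledets must be threaded through, is the price of non-stationarity. The real obstacle, however, is the ``angular richness'' claim used in the lower bound: because supertile shapes are unrelated to $B$, one cannot take $T_k\cdot(\tau_k+B_\varepsilon)$ to be a single supertile, so one must instead show that translating an $\varepsilon$-perturbed copy of $B$ through $\mathcal{T}$ produces, uniformly in $k$, a family of count-discrepancy vectors $c_k$ spread out enough in direction to witness the growth of $A^{(n_k)}(x)\mathcal{I}(f)$ from below.
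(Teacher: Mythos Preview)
Your overall architecture matches the paper's: map $f$ to the class $[\star\bar f]\in H^d(\Omega_x;\mathbb{R})$, define the distributions $\mathcal{D}_i$ as the Oseledets coordinates of that class, and derive the upper bound from a hierarchical packing of $T\cdot B$ by supertiles of levels $0,\dots,n(T,B)$ together with the collar estimate. Your telescoped inequality is exactly the content of the paper's Lemma~\ref{lem:packing} and Proposition~\ref{prop:upBnd}, and your identification $d\bar\ell=\lambda_1$ is precisely equation~(\ref{eqn:Oseledets3}). So the upper bound and the $d-1$ bound are fine.

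The lower bound is where you diverge, and where your proposal has a genuine gap: you correctly flag the ``angular richness'' of the count vectors $c_k$ as the obstacle, but you do not supply a mechanism for producing it. Your suggested route --- perturb $\partial B$ into ``general position relative to the supertile shapes'' and vary $\tau_k$ so that the resulting family of discrepancy vectors $c_k$ is spread out in direction --- is not how the paper proceeds, and it is not clear it can be made to work, since for a fixed $B_\varepsilon$ the vectors $c_k$ take only finitely many values and there is no a priori reason they span enough directions to detect an arbitrary Oseledets line.

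The paper's resolution is more rigid. It chooses $B_\varepsilon$ not in general position but as an \emph{exact patch} of $\mathcal{T}$: set $B_\varepsilon=T_*^{-1}\cdot\mathcal{O}^-_\mathcal{T}(T_*\cdot B)$ for $T_*$ large enough that this patch contains a copy of \emph{every} collared tile. It then uses Poincar\'e recurrence simultaneously on $x\in\Sigma_N$ and on the path $\bar e\in X_{\mathcal{B}_x}$ to find $n_k\to\infty$ so that (after a bounded translation $\tau_k$) the very same combinatorial patch sits inside $\Phi_x^{(n_k)}(\mathcal{T})$. The point is that the paper has already built a bespoke $L^\infty$-type norm on $H^d(\Omega_x;\mathbb{R})$ (Proposition~\ref{prop:norm} and Lemma~\ref{lem:cocycleForms}) for which $\|\Theta_x^{(n)}[\eta_\ell]\|$ is literally a maximum of $\big|\int_t A_{(n)_x}^*\eta_\ell\big|$ over the finitely many collared tile types $t$. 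Since $B_\varepsilon$ contains at least one copy of each collared tile, the maximizing tile is present in the patch, so the pairing $\big|\sum_z s(z)\int_{t_z}A_{(n_k)_x}^*\eta_\ell\big|$ is bounded below by a fixed multiple of the norm after separating the tiles whose normalized contribution has positive $\limsup$ from those whose contribution is $o(\|\cdot\|)$. This replaces your unproved ``angular richness'' claim by the combination of (i) the tailored max-over-tiles norm and (ii) the choice of $B_\varepsilon$ as a patch containing all collared tile types.
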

\begin{remark}
As in the case of translation flows \cite{forni:deviation}, the lower bound is harder to obtain than the upper bound, and the geometry of the group acting on the spaces comes into play in the derivation of a lower bound. For tilings of dimension greater than 1 ($d>1$), unlike the case of flows, the geometry of $\mathbb{R}^d$ is nontrivial, which is why we must make small changes to the averaging sets to obtain a lower bound along a subsequence.
\end{remark}
\subsection{Outline}
This paper is organized as follows. In \S\ref{sec:back} we review the necessary materials for tilings and tilings spaces. In \S\ref{sec:gifs} we review graph iterated function systems as well as Bratteli diagrams and construct Bratteli digrams from graph iterated functions systems. In \S\ref{sec:blowups} we show how to construct tilings using infinite paths on the Bratteli diagrams constructed from families of graph iterated function systems. We also relate the strcuture of the Bratteli diagram to the structure of the tiling space. In \S\ref{sec:biinf} we extend the construction to bi-infinite Bratteli diagrams and introduce the renormalization operations on the tiling spaces. \S\ref{sec:AP} concerns the cohomology of the tiling spaces constructed and it culminates with explicit norm on the cohomology spaces of top degree for tiling spaces. In \S\ref{sec:bundle} we define the cohomology bundle and define the renormalization cocycle. Using all this, in \S \ref{sec:erg}, we prove the main results on deviation of ergodic averages. The route we follow is inspired by Forni's work on translation surfaces\cite{forni:deviation} (see also \cite[\S 5]{DHL:WindTree}).
\begin{ack}
We would like to thank Giovanni Forni for pointing out a gap in the paper in an early draft. S.S was supported in part by the National Science Foundation grant `RTG: Analysis on manifolds' at Northwestern University. R.T was supported by the National Science Foundation through grant DMS-1665100.
\end{ack}

\section{Background}
\label{sec:back}
A \textbf{tile} $t$ is a bounded, connected subset of $\mathbb{R}^d$. We assume tiles have non-empty interior and regular boundary. A \textbf{tiling $\mathcal{T}$ of $\mathbb{R}^d$ by tiles $\{t_i\}_i$} is a cover of $\mathbb{R}^d$ by translated copies of the tiles $t_i$ such that any two different tiles in this cover intersect, at most, along their boundaries. Here we are only concerned with tilings obtained using copies of a finite set of tiles $\{t_1,\dots, t_M\}$, called the set of \textbf{prototiles}. A \textbf{patch} $\mathcal{P}$ of the tiling $\mathcal{T}$ is a finite subset of the tiles of $\mathcal{T}$, and the support of a patch $\mathcal{P}$ is the union of the tiles contained in $\mathcal{P}$. Finally, denote by $\partial \mathcal{T}$ the union of the boundaries of all the tiles covering $\mathbb{R}^d$ in the tiling $\mathcal{T}$, and $\partial \mathcal{P}$ the union of boundaries of the tiles contained in the patch $\mathcal{P}$ of $\mathcal{T}$. We say a tiling $\mathcal{T}$ is \textbf{regular} if the set $\partial\mathcal{T}$ is closed in $\mathbb{R}^d$. In this paper we will only consider regular tilings.

A tiling $\mathcal{T}$ admits a \textbf{substitution rule} if there exists a scaling factor $s\in (0,1)$ such that each prototile $t_i$ can be tiled by the prototiles $\{st_1,\dots ,st_M\}$. A tiling which admits a substitution rule is called a \textbf{substitution tiling}.

Tilings can be pushed around: for any $\tau\in\mathbb{R}^d$ we denote by $\varphi_\tau(\mathcal{T}) = \mathcal{T}+\tau$ the translation of the tiling $\mathcal{T}$ by the vector $\tau$. A tiling $\mathcal{T}$ is \textbf{repetitive} if for any patch $\mathcal{P}\subset \mathcal{T}$ there exists an $R>0$ such that for any $x\in\mathbb{R}^d$ the set $B_x(R)\cap\mathcal{T}$ contains a translated copy of $\mathcal{P}$. A tiling $\mathcal{T}$ has \textbf{finite local complexity} if for every $R>0$ there exists a set of patches $\mathcal{P}_1^R,\dots, \mathcal{P}_{N_R}^R$ such that for any $x\in\mathbb{R}^d$ the union of all the tiles of $\mathcal{T}$ which intersect $B_x(R)$ is a translated copy of one of the patches $\mathcal{P}_i^R$. A tiling $\mathcal{T}$ is \textbf{aperiodic} if $\varphi_\tau(\mathcal{T}) = \mathcal{T}$ implies that $\tau = 0$. In this paper we will only be concerned with aperiodic tilings of finite local complexity.

Denote by $\Pi_d:\mathbb{R}^d\rightarrow S^d$ the inverse of the stereographic projection. We can impose a distance on the set of all translates $\varphi_\tau(\mathcal{T})$ of a regular tiling $\mathcal{T}$ by
\begin{equation}
  \label{eqn:distance}
d(\mathcal{T},\varphi_\tau(\mathcal{T})) = d_H(\Pi_d(\partial\mathcal{T}),\Pi_d(\varphi_\tau(\partial\mathcal{T}))),
\end{equation}
where $d_H(X,Y)$ is the Hausdorff distance of two closed subsets $X,Y\subset S^d$. The completion
\begin{equation}
  \label{eqn:space}
  \Omega_\mathcal{T} := \overline{\{\varphi_\tau(\mathcal{T}):\tau\in\mathbb{R}^d\}}
\end{equation}
with respect to the metric (\ref{eqn:distance}) is called the \textbf{tiling space} of $\mathcal{T}$. As such, at admits an action of $\mathbb{R}^d$ by translation and thus is foliated by the orbits of this action. It is compact if $\mathcal{T}$ has finite local complexity, and the translation action of $\mathbb{R}^d$ is minimal if and only if $\mathcal{T}$ is repetitive.

Let $\mathcal{T}$ be a regular, repetitive tiling of $\mathbb{R}^d$ of finite local complexity whose tiles are all copies of a finite set of prototiles $\{t_1,\dots, t_M\}$. Pick a point $p_i \in t_i$ in the interior of each prototile. Then each tile in the tiling $\mathcal{T}$ has a distinguished point in its interior coming from the distinguished points $p_i$. The \textbf{canonical transversal}
\begin{equation}
  \label{eqn:mho}
\mho_\mathcal{T} = \{\mathcal{T}^{\prime
} \in \Omega_\mathcal{T} \mid \textnormal{ the origin is the distinguished point of the tile in } \mathcal{T}^{\prime} \textnormal{ containing the origin}\}
\end{equation}
is a Cantor subset of $\Omega_\mathcal{T}$ if $\mathcal{T}$ has finite local complexity. Its name comes from the fact that it intersects every $\mathbb{R}^d$ orbit. This set depends on our choice of distinguished points for the prototiles, but we get homeomorphic sets as long as our choice for distinguished points in every tile is uniform. The following is well known.

\begin{proposition}
  \label{prop:productStruct}
Let $\mathcal{T}$ be an aperiodic, repetitive tiling of finite local complexity. The topological space $\Omega_{\mathcal{T}}$ has a basis given by sets of the form $\mathcal{C}\times V$, where $\mathcal{C}$ is a Cantor set and $V\subset \mathbb{R}^d$ is homeomorphic to an open disk.
  \end{proposition}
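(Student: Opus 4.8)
The plan is to produce, around each tiling in $\Omega_\mathcal{T}$, an explicit neighborhood basis whose members are the image of (a clopen subset of the canonical transversal) $\times$ (a small open ball in $\mathbb{R}^d$) under the translation action, show each such image is open, and then transport these sets around by translations. I would first record two standard inputs. (i) Finite local complexity gives, up to translation, only finitely many two-tile patches occurring in $\Omega_\mathcal{T}$, and each contributes a positive distance between its two distinguished points; hence there is a uniform $\delta>0$ so that in every $\mathcal{S}\in\Omega_\mathcal{T}$ any two distinct distinguished points lie at distance $\ge\delta$. In particular, if $\mathcal{S},\mathcal{S}'\in\mho_\mathcal{T}$ and $\mathcal{S}'=\varphi_\tau(\mathcal{S})$ with $|\tau|<\delta$, then $\tau=0$, since otherwise the origin would be carried to a distinguished point within distance $<\delta$ of itself. (ii) The metric \eqref{eqn:distance} has the usual comparison property: for small $\epsilon$, $d(\mathcal{S},\mathcal{S}')<\epsilon$ holds if and only if there is $\tau\in\mathbb{R}^d$ with $|\tau|$ of order $\epsilon$ such that $\mathcal{S}$ and $\varphi_\tau(\mathcal{S}')$ agree (as patches) on $B_0(R(\epsilon))$, where $R(\epsilon)\to\infty$ as $\epsilon\to 0$.

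Next, fix $\mathcal{T}'\in\mho_\mathcal{T}$, and for $R>0$ set $\mathcal{C}_R=\{\mathcal{S}\in\mho_\mathcal{T}:\mathcal{S}\text{ and }\mathcal{T}'\text{ agree on }B_0(R)\}$. By the metric estimates of (ii) together with finite local complexity (and using (i) to kill the translation between transversal elements), each $\mathcal{C}_R$ is clopen in $\mho_\mathcal{T}$; moreover $\mathcal{T}'\in\mathcal{C}_R$ and $\bigcap_R\mathcal{C}_R=\{\mathcal{T}'\}$. Since $\mho_\mathcal{T}$ is a Cantor set and $\mathcal{C}_R$ is a nonempty clopen subset of it, $\mathcal{C}_R$ is again a Cantor set (an isolated point of $\mathcal{C}_R$ would be isolated in $\mho_\mathcal{T}$). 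Choose $0<\epsilon<\delta/2$, put $V=B_0(\epsilon)$, and define
$$h:\mathcal{C}_R\times\overline{B_0(\epsilon)}\longrightarrow\Omega_\mathcal{T},\qquad h(\mathcal{S},\tau)=\varphi_{-\tau}(\mathcal{S}).$$
Then $h$ is continuous (continuity of the translation action together with continuity of the inclusion $\mho_\mathcal{T}\hookrightarrow\Omega_\mathcal{T}$) and injective: if $\varphi_{-\tau_1}(\mathcal{S}_1)=\varphi_{-\tau_2}(\mathcal{S}_2)$ then $\mathcal{S}_1=\varphi_{\tau_1-\tau_2}(\mathcal{S}_2)$ with $\mathcal{S}_1,\mathcal{S}_2\in\mho_\mathcal{T}$ and $|\tau_1-\tau_2|\le 2\epsilon<\delta$, so $\tau_1=\tau_2$ and $\mathcal{S}_1=\mathcal{S}_2$ by (i). Being a continuous injection from a compact space into a Hausdorff space, $h$ is a homeomorphism onto its image; restricting to the open subset $\mathcal{C}_R\times V$ shows $U_{R,\epsilon}:=h(\mathcal{C}_R\times V)$ is homeomorphic to $\mathcal{C}_R\times V$, i.e.\ to (Cantor set)$\times$(open disk), and contains $\mathcal{T}'=h(\mathcal{T}',0)$.

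It remains to verify that $U_{R,\epsilon}$ is \emph{open} in $\Omega_\mathcal{T}$ and that these sets form a basis. For openness, take $p=h(\mathcal{S}_0,\tau_0)\in U_{R,\epsilon}$, so $|\tau_0|<\epsilon$, and let $\mathcal{Q}\in\Omega_\mathcal{T}$ with $d(\mathcal{Q},p)$ small. Then $\varphi_{\tau_0}(\mathcal{Q})$ is close to $\mathcal{S}_0\in\mho_\mathcal{T}$, so by (ii) the tile of $\varphi_{\tau_0}(\mathcal{Q})$ containing the origin has its distinguished point at some $\eta$ with $|\eta|$ as small as we like, whence $\varphi_{\eta+\tau_0}(\mathcal{Q})=\varphi_\eta(\varphi_{\tau_0}(\mathcal{Q}))\in\mho_\mathcal{T}$ and agrees with $\mathcal{S}_0$, hence with $\mathcal{T}'$, on $B_0(R)$, so it lies in $\mathcal{C}_R$. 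Choosing $\mathcal{Q}$ close enough that $|\eta|<\epsilon-|\tau_0|$ gives $\eta+\tau_0\in V$ and $\mathcal{Q}=h(\varphi_{\eta+\tau_0}(\mathcal{Q}),\eta+\tau_0)\in U_{R,\epsilon}$; thus a metric ball around $p$ lies in $U_{R,\epsilon}$. Finally, since $\mathcal{C}_R\downarrow\{\mathcal{T}'\}$ as $R\to\infty$ and $\epsilon$ may be taken arbitrarily small, the sets $U_{R,\epsilon}$ form a neighborhood basis at $\mathcal{T}'$; letting $\mathcal{T}'$ range over $\mho_\mathcal{T}$ and then applying the homeomorphisms $\varphi_\tau$ of $\Omega_\mathcal{T}$, which carry each $U_{R,\epsilon}$ to a set of the same form and whose orbits exhaust $\Omega_\mathcal{T}$ (every tiling is a translate of a transversal element), yields the desired basis of $\Omega_\mathcal{T}$ by sets $\mathcal{C}\times V$. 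The main obstacle is precisely this last openness step: upgrading the statement ``$\Omega_\mathcal{T}$ locally looks like $\mathcal{C}\times V$'' to ``$\mathcal{C}\times V$ sits inside $\Omega_\mathcal{T}$ as an honest open set,'' which is where both standard inputs — the metric comparison of (ii) and the finite-local-complexity facts (uniform separation $\delta$ of distinguished points and clopenness of the cylinder sets) — are essential.
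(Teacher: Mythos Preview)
The paper does not actually prove Proposition~\ref{prop:productStruct}: it is introduced with the phrase ``The following is well known'' and stated without argument, so there is no proof in the paper to compare against. Your proposal supplies the standard argument that the paper omits---building box charts $\mathcal{C}_R\times B_0(\epsilon)$ from cylinder sets in the canonical transversal crossed with small translations, and using FLC (uniform separation of punctures) plus aperiodicity to get injectivity and openness---and it is essentially correct as written; the only cosmetic point is that in the openness paragraph you should make explicit that the small translation $\eta$ is the one furnished directly by the metric comparison (ii), which simultaneously lands you in $\mho_\mathcal{T}$ and gives agreement with $\mathcal{S}_0$ on a ball larger than $R$, rather than first locating the distinguished point and then separately checking agreement.
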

For a closed subset $S\subset \mathbb{R}^d$, a tiling $\mathcal{T}$, and $r>0$, define the sets
\begin{equation*}
  \begin{split}
    \mathcal{O}^+_\mathcal{T}(S) &= \mbox{union of all tiles in $\mathcal{T}$ intersecting }S,\\
    \mathcal{O}^-_\mathcal{T}(S) &= \mbox{union of all tiles in $\mathcal{T}$ completely contained in }S, \\
    \partial_r(S) &= \mbox{$r$-neighborhood of the boundary $\partial S$ of }S.
  \end{split}
\end{equation*}

\subsection{Lipschitz domains}
Let $\mathcal{H}^m$ denote the $m$-dimensional Hausdorff measure.
\begin{definition}
A set $E\subset \mathbb{R}^d$ is called \emph{$m$-rectifiable} if there exist Lipschitz maps $f_i: \mathbb{R}^m\rightarrow\mathbb{R}^d$, $i = 1,2,\dots$ such that
$$\mathcal{H}^m\left(  E\backslash \bigcup_{i\geq 0} f_i(\mathbb{R}^m)   \right) = 0.$$
\end{definition}
\begin{definition}
A \emph{Lipschitz domain} $A\subset\mathbb{R}^d$ is an open, bounded subset of $\mathbb{R}^d$ for which there exist finitely many Lipschitz maps $f_i:\mathbb{R}^{d-1}\rightarrow \mathbb{R}^d$, $i = 1,\dots, L$ such that
$$\mathcal{H}^{d-1}\left(  \partial A \backslash \bigcup_{i=1}^L f_i(\mathbb{R}^{d-1})   \right) = 0.$$
\end{definition}
Lipschitz domains have $d-1$-rectifiable boundaries.
\begin{definition}
A subset $A\subset \mathbb{R}^d$ is a \textbf{good} Lipschitz domain if it is a Lipschitz domain and $\mathcal{H}^{d-1}(\partial A)<\infty$.
\end{definition}

\section{Graph iterated function systems}
\label{sec:gifs}
Here we recall the basics of graph iterated function systems (GIFS), our goal being to build a graph which will represent an iterated function system. Suppose we have $M\in\mathbb{N}$ copies of $\mathbb{R}^d$, denoted by $\mathbb{R}_1^d,\dots, \mathbb{R}^d_M$, and let
$$X =  \mathbb{R}^d_1\times \cdots \times \mathbb{R}^d_M.$$
Suppose we have $r(i,j)\in\mathbb{N}$ maps $f_{i,j,k}:\mathbb{R}^{d}_i\rightarrow \mathbb{R}^{d}_j$, with $k\in\{1,\dots, r(i,j)\}$. Suppose $S\subset X$ is of the form $S = S_1\times\cdots\times S_{M}$, where $S_i\subset \mathbb{R}^{d}_i$. The GIFS associated to this collection of maps is the mapping of sets defined as
\begin{equation}
  \label{eqn:bigMap}
F(S) =\left(\bigcup_{i=1}^{M}\bigcup_{k=1}^{r(i,1)} f_{i,1,k}(S_i),\dots ,\bigcup_{i=1}^{M}\bigcup_{k=1}^{r(i,M)} f_{i,M,k}(S_i)\right).
  \end{equation}
An \textbf{attractor} for the GIFS $F$ is a set $A = A_1\times\cdots\times A_{M} \subset X$ satisfying $F(A) = A$.

The following is a more general point of view. Let $\mathcal{C}$ be the set of all closed subsets of $\mathbb{R}^d$ endowed with topology induced by the Hausdorff metric, which makes it a compact metric space. Let $\mathcal{C}_k = \mathcal{C}\times\cdots \times \mathcal{C}$ be the Cartesian product of $\mathcal{C}$ with itself $k$ times with the product topology. A GIFS $F$ as above induces a map $\mathcal{F}:\mathcal{C}_{M}\rightarrow\mathcal{C}_{M}$ as follows. Let $\mathcal{S} = (\mathcal{S}_1,\dots,\mathcal{S}_{M})\in \mathcal{C}_{M}$. Then
$$\mathcal{F}(\mathcal{S}) =\left(\bigcup_{i=1}^{M}\bigcup_{k=1}^{r(i,1)} f_{i,1,k}(\mathcal{S}_i),\dots ,\bigcup_{i=1}^{M}\bigcup_{k=1}^{r(i,M)} f_{i,M,k}(\mathcal{S}_i)\right).$$
It is well known that if each $f_{i,j,k}$ is a contraction, then $\mathcal{F}$ is a contraction. As such, by Hutchinson's theorem \cite{Hutchinson}, there is a fixed point for $\mathcal{F}$ which is an attractor for $F$. A GIFS $F$ is \textbf{contracting, uniform affine scaling} (CUAS) if there exists a $s\in (0,1)$ such that all maps are of the form $f(x) = sx + q$, for some $q\in\mathbb{R}^d$.
\begin{lemma}
  \label{lem:subrule}
Any substitution rule is given by a CUAS GIFS.
\end{lemma}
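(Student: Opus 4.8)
The plan is to unwind the definitions and observe that a substitution rule is, essentially by its own description, a list of placement maps between copies of $\mathbb{R}^d$, one copy per prototile, with all maps being the same contraction followed by a translation. Concretely: suppose $\mathcal{T}$ admits a substitution rule with scaling factor $s\in(0,1)$, so that each prototile $t_j$ is tiled by isometric copies of $st_1,\dots,st_M$. Fixing $t_j$, the copies of $st_i$ appearing in this tiling of $t_j$ are of the form $s\,t_i + q$ for finitely many translation vectors $q$; let $r(i,j)$ be the number of such copies and let $q_{i,j,1},\dots,q_{i,j,r(i,j)}$ enumerate the corresponding vectors. (Here I am using that tiles are placed only by translation, which is the standing convention in \S\ref{sec:back}.)

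Next I would define the data of a CUAS GIFS from this: take $M$ copies $\mathbb{R}^d_1,\dots,\mathbb{R}^d_M$ of $\mathbb{R}^d$, indexed by the prototiles, and for each triple $(i,j,k)$ with $k\in\{1,\dots,r(i,j)\}$ set $f_{i,j,k}:\mathbb{R}^d_i\to\mathbb{R}^d_j$ to be $f_{i,j,k}(x) = sx + q_{i,j,k}$. By construction every $f_{i,j,k}$ has the required affine form $x\mapsto sx+q$ with the single common contraction ratio $s\in(0,1)$, so the resulting GIFS $F$ is CUAS. It remains to check that this GIFS genuinely ``is'' the substitution rule, which amounts to the fixed-point statement: if we set $A_i = t_i$ for each $i$ and $A = A_1\times\cdots\times A_M$, then $F(A)=A$. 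Indeed the $j$-th coordinate of $F(A)$ is $\bigcup_{i=1}^M\bigcup_{k=1}^{r(i,j)} f_{i,j,k}(t_i) = \bigcup_{i,k}(s\,t_i + q_{i,j,k})$, which is exactly the union of the tiles in the substitution of $t_j$, and that union is $t_j$ itself because the substitution rule tiles $t_j$. Hence $A$ is an attractor for $F$, and since each map is a contraction, Hutchinson's theorem shows it is the unique one, so the GIFS and the substitution rule carry the same information.

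The only genuine subtlety — and the step I would be most careful about — is the passage between ``geometric substitution rule'' and ``combinatorial GIFS data,'' namely verifying that in a substitution tiling the small tiles really do sit inside the big prototile only by translation (not by a more general isometry) and that their number and placement vectors are well-defined and finite. This is forced by the definitions in \S\ref{sec:back}: tilings are covers by \emph{translated} copies of prototiles, tiles have nonempty interior, and a prototile is a bounded set, so only finitely many translated copies of the $st_i$ can fit in $t_j$ with disjoint interiors; thus $r(i,j)<\infty$ and the vectors $q_{i,j,k}$ are uniquely determined. Everything else is a direct substitution into the formula \eqref{eqn:bigMap} for $F$ and an appeal to Hutchinson's theorem, both already recalled above.
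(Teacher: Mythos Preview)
Your proof is correct and follows the same approach as the paper: take one copy of $\mathbb{R}^d$ per prototile, let the $f_{i,j,k}$ be the placement maps $x\mapsto sx+q_{i,j,k}$ coming from the substitution, and observe that the product of the prototiles is the attractor. The paper's proof is considerably terser (it does not spell out the form of the maps, the verification of $F(A)=A$, or the finiteness of $r(i,j)$), so your version is simply a more careful unpacking of the same idea.
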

\begin{proof}
For $\mathcal{T}$ to be a substitution tiling it needs to admit a substitution rule. By definition, a substitution rule gives a way of covering each prototile $t_i$ with copies of scaled prototiles $\{st_1,\dots, st_N\}$. So for our GIFS we take $M$ to be the number of prototiles and the maps $f_{i,j,k}$ the different maps which take each prototile into another prototile. Since it is a substitution tiling, the attractor is the product of the prototiles.
\end{proof}
Given a GIFS $\mathcal{F} = \{f_{i,j,k}\}$ we can associate a graph as follows. The graph will have $|V| = M$ vertices labeled $v_1,\dots, v_{|V|}$ and there will be $r(i,j)$ directed edges going from vertex $v_i$ to vertex $v_j$. Note that there is a bijection between the edges of the graph and the maps $f_{i,j,k}$ of the GIFS.
\begin{definition}
A set of GIFS $\mathcal{F} = \{\mathcal{F}_1,\dots, \mathcal{F}_N\}$ is said to have a \textbf{shared attractor} if the attractor for $\mathcal{F}_i$ is the same as the attractor for $\mathcal{F}_j$ for all $i,j$.
\end{definition}

\begin{definition}
  \label{def:CUASfam}
  A family of GIFS $\mathcal{F} = \{\mathcal{F}_1,\dots, \mathcal{F}_N\}$ is called \textbf{contracting, uniformly affine scaling} (CUAS) if there are $(\theta_1,\dots,\theta_N)\in(0,1)^N$ such that all maps associated to $\mathcal{F}_i$ are of the form $f(x) = \theta_i x + b$ for some $b\in\mathbb{R}^d$.
\end{definition}
\begin{remark}
\label{rem:convention}
  Given a CUAS family $\mathcal{F}$ of GIFS with shared attractor $A$, without loss of generality, \textbf{we will always assume that the origin is contained in the interior of the attractor}. Whenever the attractor corresponds to the product of the prototiles in a substitution tiling this can be done by choosing a distinguished point in the interior of each prototile and making this distinguished point the origin.
  \end{remark}
\subsection{Bratteli diagrams and GIFS}
\label{subsec:brat}
A \textbf{Bratteli diagram} is an infinite directed graph $B  = (V,E)$ with both the vertex and edges sets partitioned as
$$V = \bigsqcup_{k\geq 0}V_k\hspace{1in}\mbox{ and } \hspace{1in}E = \bigsqcup_{k>0}E_k$$
with surjective maps $r:E_k\rightarrow V_{k}$ and $s:E_k\rightarrow V_{k-1}$ called, respectively, the \textbf{range and source maps}. Since the graph is directed, the maps $r,s$ describe where individual edges end and begin, respectively. We shall always assume that the sets $|V_k|$ and $|E_k|$ are finite for all $k$.

A Bratteli diagram can also be described by the transitions between levels. That is, the data of the edges between $V_{k-1}$ and $V_{k}$ is given by a matrix $M_k$ defined by
$$(M_k)_{i,j} = \textnormal{ number of edges between } v_i \in V_k \textnormal{ and } v_j \in V_{k-1}.$$
The matrix $M_{k}$ is called the \textbf{$k^{th}$ transition matrix} of the Bratteli diagram.

A finite path of a Bratteli diagram is a collection of edges $\bar{e} = (e_i,\dots, e_j)$ with $r(e_k) = s(e_{k+1})$ for all $k = 1,\dots, j-1$. We extend the domain of the source map to the set of all finite paths by assigning the source of a path to be the same vertex which is the source of the first edge of the path. Likewise, we can extend the domain of the range map to all finite paths by assigning the range of the last edge on the path. We denote by $E_{p,q}$ the set of all paths with source in $V_p$ and range in $V_q$. For $v\in V_k$, we also denote by $E_v$ the set of all paths $\bar{e}$ with $s(\bar{e})\in V_0$ and $r(\bar{e}) = v$, i.e. all paths which end in the vertex $v$.

An infinite path of a Bratteli diagram is a collection of edges $\bar{e} = (e_i,e_{i+1},\dots )$ with $r(e_k) = s(e_{k+1})$ for all $k >{i-1}$ and we extend the domain of the source map to include infinite paths in the obvious way. We denote the set of all infinite paths with source in $V_0$ by $X_B$ and endow it with the (infinite) product topology coming from the fact that $X_B$ can be seen as a subset of the infinite product of sets of edges $E_k$. Given $\bar{e} = (e_1,e_2,\dots)\in X_B$ we define $\bar{e}|_k$ to be the finite path $(e_1,\dots, e_k)$.

The topology of $X_B$ is generated by \textbf{cylinder sets}: if $\bar{e}|_k$ is a finite path, we define $C_{\bar{e}|_k}$ to be the open set of all paths which agree with $\bar{e}$ in the first $k$ edges. The collection of such cylinder sets $C_{\bar{e}}$ forms a basis for the topology on $X_{B}$, and $X_B$ is a compact totally disconnected space.

The \textbf{tail of a path $\bar{e}\in X_B$ from level $k$} is the infinite path $(e_{k+1},e_{k+2},\dots)$. Two paths $\bar{e}$ and $\bar{f}$ are \textbf{tail equivalent} if there exists a $k$ so that the tail of $\bar{e}$ from level $k$ is the same as the tail of $\bar{f}$ from level $k$. This is an equivalence relation on $X_{B}$ and we denote by $[\bar{e}]$ the tail-equivalence class of $\bar{e}\in X_B$. A Bratteli diagram is \textbf{minimal} if for any $\bar{e} \in X_{B}$ the tail-equivalence class $[\bar{e}]$ is dense in $X_B$. A tail-equivalence class $[\bar{e}]$ is called a \textbf{periodic component} of $X_B$ if it is finite.

\begin{definition}
A Borel probability measure $\mu$ on $X_{B}$ is \textbf{invariant under the tail equivalence relation} if for any two finite paths $\bar{e}$, $\bar{e}'$ with the property that $s(\bar{e}),s(\bar{e}')\in V_0$ and $r(\bar{e})=r(\bar{e}')$, we have that $\mu(C_{\bar{e}})=\mu(C_{\bar{e}'})$.
\end{definition}
Such measures will be referred to as invariant measures. If $\mu$ is an invariant measure on $X_B$ and $v\in V_k$, then we define
\begin{equation}
  \label{eqn:invMeasure}
\mu(v):= \mu(C_{\bar{e}}),
  \end{equation}
for any $\bar{e}\in E_{0,k}$ with $r(\bar{e}) = v$. By definition of invariance, this is independent of the path chosen in $E_{0,k}$.
\subsubsection{Bratteli diagrams and GIFS}
\label{subsubsec:Bratgifs}
Suppose that we have $N$ substitution rules defined on the same set of prototiles. By Lemma \ref{lem:subrule}, these are given by a CUAS family of GIFS $\mathcal{F} = \{\mathcal{F}_1,\dots, \mathcal{F}_N\}$ with shared attractor. Each GIFS $\mathcal{F}_i$ defines a matrix $\mathcal{M}_k=\mathcal{M}(\mathcal{F}_k)$ with integer entries: $\mathcal{M}(\mathcal{F}_k)_{i,j}$ is the integer $r(i,j)$ coming from the GIFS in (\ref{eqn:bigMap}). Denote by $\mathcal{M}_1,\dots, \mathcal{M}_N$ the different matrices for $\mathcal{F}$ and define $\bar{Z}:= \mathbb{Z}-\{0\}$. For $x = (x_1,x_2,\dots)\in \Sigma_N := \{1,\dots, N\}^{\bar{\mathbb{Z}}}$, the Bratteli diagram $B_x(\mathcal{F}) = B_x(\mathcal{F}_1,\dots, \mathcal{F}_N) = (V_x,E_x)$ is the Bratteli diagram with transition matrix $\mathcal{M}_k = \mathcal{M}(\mathcal{F}_{x_k})$ between $V_{k-1}$ and $V_{k}$ for all $k>0$. This is called \textbf{Bratteli diagram with parameter $x$}. Note that in this construction there is a map $f_{i,j,k}$ associated to each edge $e\in E_x$.
  \begin{remark}
Note that even though the Bratteli diagram $B_x(\mathcal{F})$ only depends on the coordinates of $x\in\Sigma_N$ with index greater than zero, we still take $x$ to be a bi-infinite sequence and not only an infinite sequence. This is because having an infinite past will help us define homeomorphisms between tiling spaces. This will become clear in \S \ref{sec:renorm}.
  \end{remark}

\section{Blowups and random substitutions}
\label{sec:blowups}
The following condition first appeared in \cite{GM:mixed} and it ensures that a family $\mathcal{F}$ of GIFS gives a substitution rule with enough structure to guarantee finite complexity.
\begin{definition}[Compatibility]
  A family $\mathcal{F} = \{\mathcal{F}_1,\dots,\mathcal{F}_N\}$ of GIFS with shared attractor $A = A_1\times\cdots\times A_M$ are \textbf{compatible} if for every $i$, $A_i$ has a $CW$-structure and if for any $v\in V-V_0$, for any $\bar{e},\bar{e}'\in E_v$ with $f_{\bar{e}}(A_{s(\bar{e})})\cap f_{\bar{e}'}(A_{s(\bar{e}')})\neq \varnothing$, the intersection is a union of $d-1$ cells in both $f_{\bar{e}}(A_{s(\bar{e})})$ and $f_{\bar{e}'}(A_{s(\bar{e}')})$.
\end{definition}
In order to reduce the tedious number of adjectives assigned to families of graph iterated function systems we make the following definition.
\begin{definition}[Type H]
  \label{def:typeH}
  A \textbf{type H family} $\mathcal{F}$ is a finite collection $\{\mathcal{F}_1,\dots, \mathcal{F}_N\}$ of graph iterated function systems which
  \begin{enumerate}
  \item is contracting,
  \item is uniformly affine scaling,
  \item has a shared attractor containing the origin,
  \item is compatible.
  \end{enumerate}
\end{definition}
Let $\mathcal{F} = \{\mathcal{F}_1,\dots, \mathcal{F}_N\}$ be a type H family and pick $x\in \Sigma_N$. Let $B_x(\mathcal{F}) = (V_x(\mathcal{F}), E_x(\mathcal{F}))$ be the Bratteli diagram given by the family $\mathcal{F}$ GIFS and parameter $x$. Note that the number of vertices is the same for all levels (as it is given by the number of prototiles in each of the substitutions) and we denote this number by $M=|V|$. Recall that the set of edges is in bijection with contracting maps $f_{i,j,k}$ of $\mathbb{R}^d$ in (\ref{eqn:bigMap}). Thus, to any edge $e\in E_x(\mathcal{F})$ there is a unique contracting map $f_e:\mathbb{R}^d\rightarrow \mathbb{R}^d$.

Given a finite path $\bar{e} = (e_{p+1},\dots,e_q)\in E_{p,q}$ on $B_x(\mathcal{F}_1,\dots, \mathcal{F}_N)$ we define an associated map
\begin{equation}
  \label{eqn:comp}
f_{\bar{e}} := f_{e_{q}}\circ f_{e_{q-1}}\circ \cdots \circ f_{e_{p+1}}:\mathbb{R}^d\rightarrow \mathbb{R}^d.
\end{equation}
Starting from the attractor $A = A_1\times \cdots \times A_{M} $ we can build a sequence of tiled patches of arbitrarily large size through ``blowups'' \cite{BarnsleyVince:blowup}.

Pick $\bar{e}\in E_{0,k}$. The idea behind blowups is first to consider each part $A_i$ of the attractor $A$ as a prototile, and label the prototiles with the vertices $\{v_1,\dots, v_{M}\}$. Since finite paths $\bar{e}$ give us maps through composition as in (\ref{eqn:comp}), we can start with a part of the attractor $A_i$ and apply the inverse of the map $f_{\bar{e}}$ to ``blow up'' $A_i$. More precisely, for $\bar{e}\in E_{0,k}$ consider the set $f_{\bar{e}}^{-1}(A_{r(\bar{e})})$, where $A_{r(\bar{e})} = A_j$  if $r(\bar{e}) = v_j\in V_k$.
This set $f_{\bar{e}}^{-1}(A_{r(\bar{e})})$ is the rescaling of $A_{r(\bar{e})}$ by the factor $\theta_{x_1}^{-1}\cdots \theta_{x_k}^{-1}$, where the $\theta_i$'s are the scaling factors in Definition \ref{def:CUASfam}. Moreover, this larger copy of $A_{r(\bar{e})}$ is tiled by tiles of the form $f^{-1}_{\bar{e}}\circ f_{\bar{e}'}(A_{s(\bar{e}')})$, where $\bar{e}'$ is any path from $V_0$ to $r(\bar{e})$. In other words:
\begin{equation}
  \label{eqn:approx}
  f_{\bar{e}}^{-1}(A_{r(\bar{e})}) = \bigcup_{\bar{e}'\in E_{r(\bar{e})}}f^{-1}_{\bar{e}}\circ f_{\bar{e}'}(A_{s(\bar{e}')}).
\end{equation}
Let us emphasize that the union in (\ref{eqn:approx}) is the union of copies of prototiles: for each $\bar{e}'\in E_{r(\bar{e})}$, $f^{-1}_{\bar{e}}\circ f_{\bar{e}'}(A_{s(\bar{e}')})$ is a copy of the prototile $A_{s(\bar{e}')}$. So $f_{\bar{e}}^{-1}(A_{r(\bar{e})})$ in (\ref{eqn:approx}) is a patch of some tiling since it decomposes as the union of copies of prototiles.
\begin{definition}
Let $B = B_x(\mathcal{F}_1,\dots,\mathcal{F}_N)$ be a Bratteli diagram with parameter $x$ built from a type H family $\mathcal{F}$, and pick $\bar{e}\in X_{B}$. For $k\in\mathbb{N}$, the \textbf{$k^{th}$ approximant of $\bar{e}$} is the union of tiles forming the set $f_{\bar{e}|_k}^{-1}(A_{r(\bar{e}|_k)})$ as defined in (\ref{eqn:approx}). We denote by $\mathcal{P}_k(\bar{e})$ the $k^{th}$ approximant of $\bar{e}$. By convention we make the zeroth approximant $\mathcal{P}_0(\bar{e}) := A_{s(\bar{e})}$.
\end{definition}
Since we have assumed that the shared attractor contains the origin in its interior, it follows that $\mathcal{P}_k(\bar{e})\subset \mathcal{P}_{k+1}(\bar{e})$ for any $k$. That is, the tiles in the approximant $\mathcal{P}_k(\bar{e})$ are also tiles in the approximant $\mathcal{P}_{k+1}(\bar{e})$. Thus, taking arbitrarily large values of $k$ tiles arbitrarily large parts of $\mathbb{R}^d$ through (\ref{eqn:approx}).
\begin{definition}
  Let $B = B_x(\mathcal{F}_1,\dots, \mathcal{F}_N)$ be a Bratteli diagram with parameter $x$, where $\mathcal{F}$ is a type H family, and pick $\bar{e}\in X_{B_x}$. The \textbf{tiling associated to $\bar{e}$} is
  \begin{equation}
    \label{eqn:tiling}
    \mathcal{T}_{\bar{e}} = \bigcup_{k=1}^{\infty} \mathcal{P}_k(\bar{e}).
    \end{equation}
\end{definition}
Note that the approximants $\mathcal{P}_k(\bar{e})$ are patches of $\mathcal{T}_{\bar{e}}$. Patches of the form $\mathcal{P}_k(\bar{e})$ are also called \textbf{level $k$ supertiles}.
We now investigate when it is the case the the tilings $\mathcal{T}_{\bar{e}}$ defined above cover all of $\mathbb{R}^d$ or just parts of it.

Let $\mathcal{F} = \{\mathcal{F}_1,\dots, \mathcal{F}_N\}$ be a type H family and $x\in \Sigma_N$. Let $B_x(\mathcal{F}) = (V,E)$. For each $\ell\in\mathbb{N}$, let
$$\partial V_\ell = \{\bar{e} = (e_1,\dots, e_\ell,\dots )\in X_B:f_{e_\ell}\circ \cdots \circ f_{e_1}(A_{s(e_1)})\cap \partial A_{r(e_\ell)} \neq \varnothing\},$$
and denote by
$$\limsup \partial V_k = \bigcap_{n\geq 1}\bigcup_{k\geq n}\partial V_k$$
the set of paths which are in $\partial V_k$ for infinitely many $k$.

\begin{lemma}
  \label{lem:cover}
Let $\mathcal{F}= \{\mathcal{F}_1,\dots,\mathcal{F}_N\}$ be a type H family. Then the tiling $\mathcal{T}_{\bar{e}}$ covers all of $\mathbb{R}^d$ if  $\bar{e}\not\in \limsup \partial V_k$.
\end{lemma}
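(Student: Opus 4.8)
The strategy is to show that if $\bar e\notin\limsup\partial V_k$, then the supertiles $\mathcal{P}_k(\bar e)$ contain balls around the origin of radius tending to infinity, which forces $\mathcal{T}_{\bar e}=\bigcup_k\mathcal{P}_k(\bar e)$ to exhaust $\mathbb{R}^d$. First I would unwind the definitions: recall that $\mathcal{P}_k(\bar e)=f_{\bar e|_k}^{-1}(A_{r(\bar e|_k)})$, so the origin lies in $\mathcal{P}_k(\bar e)$ (since the origin is in the interior of $A$ by the Type H hypothesis, and $f_{\bar e|_k}^{-1}$ sends $f_{\bar e|_k}(0)$-translated copies appropriately — more precisely, by Remark \ref{rem:convention} and the nesting $\mathcal{P}_k(\bar e)\subset\mathcal{P}_{k+1}(\bar e)$, the origin is interior to every approximant). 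Let $r_k$ be the largest radius such that $B_0(r_k)\subset \mathcal{P}_k(\bar e)$; since the tiling is $\bigcup_k\mathcal{P}_k(\bar e)$, it suffices to prove $r_k\to\infty$.

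The key quantitative point is the relation between $r_k$ and the condition $\bar e\notin\limsup\partial V_k$. Observe that $\bar e|_k$ lies in $\partial V_k$ precisely when the level-$0$ tile $f_{e_k}\circ\cdots\circ f_{e_1}(A_{s(e_1)})$ — i.e. one particular tile of the blown-down picture — touches $\partial A_{r(\bar e|_k)}$; after blowing up by $f_{\bar e|_k}^{-1}$, this says the tile of $\mathcal{P}_k(\bar e)$ containing the origin (which is the image $f_{\bar e|_k}^{-1}\circ f_{\bar e|_k}(A_{s(e_1)}) = $ a unit-size copy of $A_{s(e_1)}$ sitting at the origin) touches the boundary of the supertile $\mathcal{P}_k(\bar e)$. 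So $\bar e\notin\partial V_k$ means the origin-tile of $\mathcal{P}_k(\bar e)$ is interior to $\mathcal{P}_k(\bar e)$, hence surrounded by a collar of further tiles. Now if $\bar e\notin\limsup\partial V_k$, there is $n_0$ with $\bar e\notin\partial V_k$ for all $k\ge n_0$. I would then argue that for each such $k$, passing from level $k$ to level $k+1$ blows everything up by the factor $\theta_{x_{k+1}}^{-1}>1$ while the origin-tile stays a fixed-size copy of a prototile surrounded (in $\mathcal{P}_{k+1}(\bar e)$) by at least the previous collar rescaled; iterating, $r_k$ grows at least like a product $\prod_{j=n_0+1}^{k}\theta_{x_j}^{-1}$ times a fixed positive constant (the inradius of the collar at level $n_0$), which diverges because each $\theta_j^{-1}$ is bounded below by $(\max_i\theta_i)^{-1}>1$.

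The main obstacle I anticipate is making the "collar stays around the origin after blow-up" argument precise — i.e. showing that $\bar e\notin\partial V_k$ for all $k\ge n_0$ really yields a uniformly positive lower bound on the inradius at level $n_0$ that then gets amplified, rather than a collar that could shrink relative to the (growing) supertile. The cleanest way is: not being in $\partial V_k$ means the origin-tile of $\mathcal{P}_k(\bar e)$ does not meet $\partial\mathcal{P}_k(\bar e)$; since there are only finitely many possible local patch types (here one should invoke, or re-derive, finite local complexity of these patches, which follows from compatibility of the Type H family), the distance from the origin-tile to $\partial\mathcal{P}_k(\bar e)$ is bounded below by a single constant $\delta>0$ independent of $k$, measured in the scale of level-$0$ tiles; multiplying by the blow-up factor $f_{\bar e|_k}^{-1}$ turns this into $r_k\ge \delta\cdot\theta_{x_1}^{-1}\cdots\theta_{x_k}^{-1}\to\infty$. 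I would also need to handle the finitely many levels $k<n_0$, but those contribute only a bounded error and do not affect the limit. Once $r_k\to\infty$ is established, $\mathbb{R}^d=\bigcup_k B_0(r_k)\subset\bigcup_k\mathcal{P}_k(\bar e)=\mathcal{T}_{\bar e}$, completing the proof.
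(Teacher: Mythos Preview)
Your overall plan mirrors the paper's: from $\bar e\notin\limsup\partial V_k$ you extract a threshold $n_0$ with $\bar e\notin\partial V_k$ for all $k\ge n_0$, and then argue that the approximants $\mathcal P_k(\bar e)$ exhaust $\mathbb R^d$. The paper's (brief) proof does exactly this, but phrases the growth step in terms of \emph{consecutive} approximants: it asserts that for $k>j$ the support of $\mathcal P_k(\bar e)$ lies in the interior of $\mathcal P_{k+1}(\bar e)$ at a uniformly positive distance from its boundary, so the nested chain fills $\mathbb R^d$.

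Your growth step, however, contains a scaling error. You correctly observe that $\bar e\notin\partial V_k$ means the origin-tile $A_{s(e_1)}$ does not meet $\partial\mathcal P_k(\bar e)$, and that compatibility/FLC then gives a uniform lower bound $\delta>0$ on the distance from the origin-tile to $\partial\mathcal P_k(\bar e)$. But that distance is already measured in the ambient $\mathbb R^d$ where $\mathcal P_k(\bar e)$ lives and where tiles have unit scale; there is no further ``blow-up by $f_{\bar e|_k}^{-1}$'' to apply, and you obtain only $r_k\ge\delta$, not $r_k\ge\delta\,\theta_{x_1}^{-1}\cdots\theta_{x_k}^{-1}$. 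The same confusion recurs in your iterative paragraph: passing from $\mathcal P_k$ to $\mathcal P_{k+1}$ does not rescale the collar already present at level $n_0$ --- $\mathcal P_k$ sits unchanged inside the larger set $\mathcal P_{k+1}$. What makes the paper's route work is a uniform gap between $\mathcal P_k$ and $\partial\mathcal P_{k+1}$ (not between the origin-tile and $\partial\mathcal P_k$): a uniform $\epsilon>0$ with $\mathcal P_k+B(0,\epsilon)\subset\mathcal P_{k+1}$ yields $r_{k+1}\ge r_k+\epsilon$ and hence $r_k\to\infty$. To salvage your approach you would need to upgrade your origin-tile statement to this consecutive-approximant statement.
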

\begin{proof}
  Let $\bar{e}\not\in \limsup \partial V_k$. There exists a $j$ such that $\bar{e}\not\in \partial V_k$ for all $k>j$. This means the support of $\mathcal{P}_k(\bar{e})$ is contained in the interior of the support of $\mathcal{P}_{k+1}(\bar{e})$, at a positive distance from the boundary of the support of $\mathcal{P}_{k+1}(\bar{e})$, uniformly for all $k>j$. Thus the nested approximants $\mathcal{P}_{k-1}(\bar{e})\subset \mathcal{P}_k(\bar{e})\subset \mathcal{P}_{k+1}(\bar{e})\subset \cdots$ eventually cover all of $\mathbb{R}^d$.
\end{proof}
The following result will not be used for the main theorem. However it is still interesting to know how likely it is that a path $\bar{e}\in X_B$ gives a tiling $\mathcal{T}_{\bar{e}}$ of $\mathbb{R}^d$.
\begin{lemma}
  \label{lem:limsup}
  Let $\mathcal{F}= \{\mathcal{F}_1,\dots,\mathcal{F}_N\}$ be a type H family. For $x\in\Sigma_N$ let $E$ be the set of edges of the Bratteli diagram $B = B_x(\mathcal{F})$, and assume $B_{x}$ has $M$ vertices at each level. For the two quantities
    $$\lambda^-:= \min_{v\in \{v_1,\dots, v_M\}} \left\{ \liminf_{k\rightarrow \infty}\frac{\log |E_{v}|}{k}\right\} \hspace{.4in}\mbox { and } \hspace{.4in} \lambda^+:= \max_{v\in \{v_1,\dots, v_M\}} \left\{ \limsup_{k\rightarrow \infty}\frac{\log |E_{v}|}{k}\right\}$$
  suppose that $\lambda^->0$ and
  \begin{equation}
    \label{eqn:lambdaCond}
    \lambda^+-\lambda^- < \frac{\lambda^+}{d}.
    \end{equation}
 If $\mu$ is a Borel probability measure which is invariant under the tail equivalence relation, then $\mu(\limsup \partial V_k) = 0$.
\end{lemma}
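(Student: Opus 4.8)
The plan is to show $\sum_{k\ge 1}\mu(\partial V_k)<\infty$ and then apply the first Borel--Cantelli lemma; since $\limsup\partial V_k$ is by definition the set of paths lying in $\partial V_k$ for infinitely many $k$, this gives $\mu(\limsup\partial V_k)=0$. To begin, fix $\ell\ge 1$. Whether a path $\bar e$ lies in $\partial V_\ell$ depends only on $\bar e|_\ell$, so $\partial V_\ell$ is the disjoint union of the cylinders $C_{\bar f}$ over those $\bar f\in E_{0,\ell}$ with $f_{\bar f}(A_{s(\bar f)})\cap\partial A_{r(\bar f)}\ne\varnothing$. Grouping these paths by range vertex and using tail-invariance of $\mu$ in the form (\ref{eqn:invMeasure}), together with $\mu(X_B)=1$, yields
$$\mu(\partial V_\ell)=\sum_{v\in V_\ell}b_\ell(v)\,\mu(v),\qquad\qquad\sum_{v\in V_\ell}|E_v|\,\mu(v)=1,$$
where $b_\ell(v)$ is the number of pieces $f_{\bar f}(A_{s(\bar f)})$, $\bar f\in E_v$, making up the $\ell$-fold substitution of the prototile $A_v$ (cf.\ (\ref{eqn:approx})) that meet $\partial A_v$. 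The second identity forces $\mu(v)\le|E_v|^{-1}$, so $\mu(\partial V_\ell)\le M\max_{v\in V_\ell}b_\ell(v)/|E_v|$.

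Next I would prove two volume estimates. Write $\Theta_\ell:=\theta_{x_1}\cdots\theta_{x_\ell}\in(0,1)$ for the common contraction ratio of the maps $f_{\bar f}$, $\bar f\in E_{0,\ell}$. Each piece $f_{\bar f}(A_{s(\bar f)})$ is a $\Theta_\ell$-rescaled copy of a prototile, hence has diameter $\le C\Theta_\ell$ and $d$-volume comparable to $\Theta_\ell^d$, with constants depending only on the finitely many prototiles (each of non-empty interior, by the standing convention of Remark \ref{rem:convention}). Since the pieces tile $A_v$ with pairwise disjoint interiors, comparing total volumes gives $|E_v|\asymp\Theta_\ell^{-d}$ uniformly in $v$. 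A piece meeting $\partial A_v$ lies in the $C\Theta_\ell$-neighborhood of $\partial A_v$, and since the prototile boundaries are regular --- in particular $(d-1)$-rectifiable of finite $\mathcal{H}^{d-1}$-measure --- this neighborhood has volume $O(\Theta_\ell)$ once $\ell$ is large, so a second volume comparison yields $b_\ell(v)=O(\Theta_\ell^{1-d})$, again uniformly in $v$.

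I would then combine the estimates via the growth rates. By the definitions of $\lambda^{\pm}$, for every $\varepsilon>0$ and all large $\ell$ one has $e^{(\lambda^--\varepsilon)\ell}\le|E_v|\le e^{(\lambda^++\varepsilon)\ell}$ for all $v\in V_\ell$; with $|E_v|\asymp\Theta_\ell^{-d}$ this gives $\Theta_\ell^{1-d}=(\Theta_\ell^{-d})^{(d-1)/d}\le C'e^{(d-1)(\lambda^++\varepsilon)\ell/d}$. Hence
$$\mu(\partial V_\ell)\ \le\ C''\exp\!\big(\,[\,\tfrac{(d-1)\lambda^+}{d}-\lambda^-+O(\varepsilon)\,]\,\ell\,\big).$$
Since $\lambda^+-\lambda^-<\lambda^+/d$ is exactly $\tfrac{(d-1)\lambda^+}{d}<\lambda^-$, and $\lambda^->0$ makes the right-hand side a genuine positive rate, a small enough $\varepsilon$ makes the exponent negative; thus $\sum_\ell\mu(\partial V_\ell)<\infty$ and Borel--Cantelli completes the argument.

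The step I expect to be hardest is the surface estimate $b_\ell(v)=O(\Theta_\ell^{1-d})$: the naive recursion through the transition matrices over-counts and only gives $b_\ell(v)\le|E_v|$, so one must genuinely use the geometry of the prototile boundaries, taking care that the Minkowski-content constant is uniform over the (finitely many) prototiles and that the error term is uniform in $\ell$. Everything else is bookkeeping; in particular the possible discrepancy between the liminf $\lambda^-$ and the limsup $\lambda^+$ is precisely what the hypothesis $\lambda^+-\lambda^-<\lambda^+/d$ is designed to absorb.
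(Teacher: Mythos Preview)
Your proposal is correct and follows essentially the same route as the paper: decompose $\mu(\partial V_\ell)$ over vertices, bound $\mu(v)$ via the normalization $\sum_v|E_v|\mu(v)=1$ together with $\lambda^-$, bound the boundary count by a $(d-1)/d$-power volume argument, and finish with Borel--Cantelli. The only cosmetic difference is that you work in the contracted picture (tiling $A_v$ by $\Theta_\ell$-scaled pieces and invoking a Minkowski-content bound on $\partial A_v$) while the paper works in the blown-up picture (tiling $\mathcal{P}_k(\bar e)$ by unit tiles and bounding $\mathrm{Vol}(\partial_{5r_*}\mathcal{P}_k(\bar e))$ by $\mathrm{Vol}(\mathcal{P}_k(\bar e))^{(d-1)/d}$); these are related by a global rescaling and yield the same exponent.
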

In practice, we will usually have $\lambda^+=\lambda^->0$, which will satisfy the hypotheses of the Lemma. In fact, under some mild assumptions of minimality of $B_x$, one can always show that $\lambda^+=\lambda^->0$. We leave this to the interested reader to work out.
  \begin{proof}
   Let $\mu$ be a Borel probability measure on $X_B$ which is invariant for the tail-equivalence relation. Note that
    \begin{equation}
      \label{eqn:Yk}
      \mu(\partial V_k) = \sum_{\bar{e}\in E_{0,k}} \mu(C_{\bar{e}}) = \sum_{v\in V_k}\mu(v)\cdot |E_v\cap \partial V_k|
    \end{equation}
    for any $k>0$, where $\mu(v)$ is defined in (\ref{eqn:invMeasure}). We will show that
    $$\sum_{k>0} \mu(\partial V_k)<\infty$$
  for any measure $\mu$ invariant under the tail equivalent relation. Thus by the Borel-Cantelli lemma we will have that $\mu(\limsup \partial V_k) =0$.

First, we claim that for any $\varepsilon\in( 0,\lambda^-)$ there exists a constant $c_\varepsilon>0$ such that
    \begin{equation}
      \label{eqn:firstEst}
      \mu(v) < c_\varepsilon e^{-(\lambda^--\varepsilon)k}
  \end{equation}
    for any $v\in V_k$ and $k>0$. Indeed, by the definition of $\lambda^-$, for any $\varepsilon\in(0,\lambda^-)$ there exists a $c_0$ such that for any $v\in V_k$ and $k>0$
    $$c_0e^{(\lambda^--\varepsilon)k}\leq |E_v|.$$
    Then, since $1 = \sum_{v\in V_k}\mu(v)|E_v|$, for any $k$, we have
    $$\sum_{v\in V_k }\mu(v)c_0e^{(\lambda^--\varepsilon)k}\leq \sum_{v\in V_k}\mu(v)|E_v| = 1,$$
    from which it follows that
    $$\mu(v) < \sum_{v\in V_k} \mu(v) \leq \frac{1}{c_0}e^{-(\lambda^--\varepsilon)k} = c_\varepsilon e^{-(\lambda^--\varepsilon)k} $$
    for any $k>0$ and $v\in V_k$, proving (\ref{eqn:firstEst}).

    We now claim that for any $\varepsilon\in (0,\lambda^+)$ there exists a constant $C_\varepsilon$ such that
    \begin{equation}
      \label{eqn:secondEst}
      | \partial V_k| \leq C_\varepsilon e^{(\lambda^++\varepsilon)\frac{(d-1)}{d}k}.
    \end{equation}
    Indeed, let $v\in V_k$ and let $\bar{e}\in X_B$ be such that $r(\bar{e}|_k) = v$. Then $\mathcal{P}_k(\bar{e})$ is a CW-complex of volume $\mathrm{Vol}(A_{r(\bar{e}|_k)}) (\theta_{x_k}\cdots \theta_{x_1})^{-d}$ tiled by $|E_v|$ tiles, each a copy of some prototile $t_{i}$, as in (\ref{eqn:approx}). Now, for any $\varepsilon_+\in (0,\lambda^+)$ there exists a $C_\varepsilon'>0$ such that
    $$\mathrm{Vol}(\mathcal{P}_k(\bar{e}))=\mathrm{Vol}(A_{r(\bar{e}|_k)}) (\theta_k\cdots \theta_1)^{-d} \leq C_\varepsilon' e^{(\lambda^++\varepsilon_+)k}.$$
    Let $r_*>0$ be large enough that $B_{r_*}$ contains a copy of any prototile $t_i$ in its interior.  Since $\mathcal{P}_k(\bar{e})$ is a CW-complex, then there exists a $K$ such that $\mathrm{Vol}_{}(\partial_{5r_*}( \mathcal{P}_k(\bar{e}))) \leq K \mathrm{Vol}(\mathcal{P}_k(\bar{e}))^{\frac{d-1}{d}}$. Now, since $|\partial V_k|$ is the number of paths in $E_k$ which correspond to tiles on the boundary of approximants $\mathcal{P}_k(\bar{e})$, $|\partial V_k|$ is proportional to $\mathrm{Vol}_{}(\partial_{5r_*}( \mathcal{P}_k(\bar{e})))$. Combining this with the above bound, we get (\ref{eqn:secondEst}) with $C_\varepsilon = KC_\varepsilon'$.

By (\ref{eqn:lambdaCond}) we can pick $\varepsilon_\pm \in (0,\lambda^\pm)$ small enough so that
    \begin{equation}
\lambda^+-\lambda^- + \varepsilon_+ + \varepsilon_- < \frac{\lambda^+ +\varepsilon_+}{d},
    \end{equation}
    and hence
    \begin{equation}\label{eqn:lambdaCond2}
     (d-1)\lambda^{+} - d \lambda^{-} + (d-1)\epsilon_{+} + d \epsilon^{-} < 0.
     \end{equation}

    Finally, by (\ref{eqn:Yk}), (\ref{eqn:firstEst}) and (\ref{eqn:secondEst}),
    \begin{equation*}
      \begin{split}
        \mu(\partial V_k) &= \sum_{v\in V_k}\mu(v)\cdot |E_v\cap \partial V_k| \leq |\partial V_k|\cdot M \max_{v\in V_k}\mu(v) \\
        &= M\left(C_{\varepsilon_+} e^{(\lambda^++\varepsilon_+)\frac{(d-1)}{d}k} \right)\left(c_{\varepsilon_-}e^{-(\lambda^--\varepsilon_-)k}\right)\\
        &= C' \exp\left(k\left(\frac{d-1}{d}(\lambda^+ + \epsilon_{+})-\lambda^{-}-\varepsilon_- \right)\right) \le C^{\prime}\lambda_{*}^{k}\\
      \end{split}
    \end{equation*}
    for some $\lambda_*\in(0,1)$, where we used (\ref{eqn:lambdaCond2}) in the last equality. Thus we have that $\sum_{k>0}\mu(\partial V_k)<\infty$, so by the Borel-Cantelli lemma, $\mu(\limsup \partial V_k) = 0$.
  \end{proof}
  Let $\mathcal{F}= \{\mathcal{F}_1,\dots,\mathcal{F}_N\}$ be a type H family and suppose that $B_x(\mathcal{F})$ is minimal. We define the set of singular paths by
  $$ \Sigma_{B_x} := \left\{ \bar{e}\in X_B : \mathcal{T}_{\bar{e}} \,\,\mbox{ does not tile all of } \mathbb{R}^d\right\}$$
  which, by Lemma \ref{lem:cover}, is a subset of $\limsup \partial V_k$.
\begin{definition}
The \textbf{extension set of $\mathcal{T}_{\bar{e}}$} for $\bar{e}\in \Sigma_{B_x}$ consists of all $\mathcal{T}_{\bar{e}'}$ where $\bar{e}'$ is a limit point of $\{\bar{e}_k\}$, with $\bar{e}_k\rightarrow \bar{e}\in\Sigma_{B_x}$ but $\bar{e}_k\not\in\Sigma_{B_x}$ for all $k$.
\end{definition}
That extensions exist whenever $\mu(\Sigma_{B_x})=0$ for a finite Borel invariant measure $\mu$ follows from Lemma \ref{lem:cover}: since $\bar{e}_k\not \in \Sigma_{B_x}$, $\mathcal{T}_{\bar{e}_k}$ covers all of $\mathbb{R}^d$. Since $X_B$ is compact, a limit exists along a subsequence. Thus, even if $\mathcal{T}_{\bar{e}}$ does not tile all of $\mathbb{R}^d$ for $\bar{e}\in \Sigma_{B_x}$, there are tilings of $\mathbb{R}^d$ which contain the tiling $\mathcal{T}_{\bar{e}}$, namely any of its extensions. Finally, define
$$\mathring{X}_B := X_B\backslash \Sigma_{B_x}.$$
\begin{lemma}
\label{lem:FLC}
Suppose $\mathcal{F}$ is a type H family, $x\in\Sigma_N$ and $\bar{e}\in \mathring{X}_{B_x(\mathcal{F})}$. Then $\mathcal{T}_{\bar{e}}$ has finite local complexity.
\end{lemma}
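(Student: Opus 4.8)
The plan is to show that for every $R>0$, there are only finitely many patches of $\mathcal{T}_{\bar e}$ of radius $R$ up to translation. The key structural fact I would exploit is the hierarchical decomposition already in place: $\mathcal{T}_{\bar e}=\bigcup_k \mathcal{P}_k(\bar e)$ with nested supertiles, and each level-$k$ supertile $\mathcal{P}_k(\bar e)$ is subdivided (via the composed maps $f_{\bar e|_k}^{-1}\circ f_{\bar e'}$, $\bar e'\in E_{r(\bar e|_k)}$) into copies of the $M$ prototiles $A_1,\dots,A_M$. Because the family $\mathcal{F}$ is uniformly affine scaling, the tiles at level $k$ of the blow-up all have a single common scale $\theta_{x_1}^{-1}\cdots\theta_{x_k}^{-1}$; in particular the inradii of the tiles in $\mathcal{T}_{\bar e}$ are bounded below by the minimal prototile inradius $\rho_{\min}>0$, so any ball $B_x(R)$ meets at most a fixed number $\kappa=\kappa(R,d)$ of tiles, independent of $x$ and of $\bar e$.

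First I would fix $R$ and choose $k=k(R)$ large enough that the scale $\theta_{x_1}^{-1}\cdots\theta_{x_k}^{-1}$ exceeds $2R$ plus twice the prototile diameter — note this $k$ can be chosen uniformly in $\bar e$ since the $\theta_i$ lie in a finite set bounded away from $1$. Then any ball $B_x(R)$ that meets $\mathcal{T}_{\bar e}$ is contained in the union of at most $2^d$ adjacent level-$k$ supertiles (the supertiles at level $k$ tile $\mathbb{R}^d$, up to the measure-zero singular set, and have inradius $\geq$ something comparable to the scale). So the patch of $\mathcal{T}_{\bar e}$ around any point is determined, up to translation, by the combinatorial type of a bounded cluster of level-$k$ supertiles together with the position of the ball inside it. The number of distinct level-$k$ supertiles appearing in $\mathcal{T}_{\bar e}$ is at most $M$ (one combinatorial type per prototile, since the subdivision of $f_{\bar e|_k}^{-1}(A_{r(\bar e|_k)})$ is a fixed pattern once $x_1,\dots,x_k$ are fixed), and hence the number of combinatorial types of $2^d$-clusters of such supertiles that actually occur is finite. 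Since each supertile has bounded diameter relative to $R$, and $B_x(R)$ can sit in only finitely many translational positions relative to a fixed finite cluster (again because all tiles have inradius bounded below, the ``fault lines'' are discrete-ish), one concludes there are finitely many $R$-patches up to translation.

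The one point requiring care — and the step I expect to be the main obstacle — is making the last sentence precise: a priori the translational positions of $B_x(R)$ inside a fixed cluster form a continuum, so ``finitely many patches'' needs the standard finite-local-complexity argument that the patch only changes when the boundary of $B_x(R)$ crosses a tile boundary, and that within a fixed cluster of bounded complexity there are only finitely many such combinatorial positions. This is exactly the classical equivalence (for tilings built from finitely many prototiles with a uniform lower bound on tile size and with hierarchically constrained local patterns) between ``finitely many prototiles plus hierarchical structure'' and FLC; one can cite the analogous statement for ordinary substitution tilings (e.g. the discussion around self-similar tilings in \cite{solomyak:SS}) and observe that the proof only uses the uniform scale at each level and the compatibility condition, both guaranteed here by $\mathcal{F}$ being type H. I would therefore organize the write-up as: (1) uniform lower bound on inradii and the resulting bound $\kappa(R,d)$ on tiles meeting a ball; (2) uniform choice of level $k(R)$ and containment of $B_x(R)$ in a bounded supertile cluster; (3) finitely many supertile types $\Rightarrow$ finitely many cluster types; (4) the classical FLC argument that bounded-complexity clusters yield finitely many translational patch types. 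Step (4) is where the compatibility hypothesis (intersections of adjacent supertiles are unions of $(d-1)$-cells) is used to rule out pathological overlaps, ensuring the cluster really is a legitimate patch with finitely many internal configurations.
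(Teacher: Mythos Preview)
The paper's own proof is a single sentence: ``This follows from the compatibility condition, so there are finitely many local configurations.'' In other words, the authors simply invoke compatibility and stop. Your proposal is therefore not so much a different route as an attempt to flesh out what the paper leaves unsaid.

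Your instincts are right that compatibility is the whole point, but the architecture you build around it is more elaborate than needed and has a couple of soft spots. First, the $2^d$ bound on the number of level-$k$ supertiles meeting a ball is not correct for general prototile shapes; it should just be ``a bounded number depending on $R$, $d$, and the prototile geometry,'' which is all the argument needs. Second, and more importantly, your step (3) --- ``finitely many supertile types $\Rightarrow$ finitely many cluster types'' --- is exactly the place where the real work happens, not step (4). Knowing there are $M$ supertile types does not by itself bound the number of ways they can be arranged adjacently; for that you need to know that adjacent supertiles meet in only finitely many relative positions, and that is precisely what compatibility delivers. Once you grant yourself that, the detour through level-$k$ supertiles is unnecessary: compatibility (applied at any $v\in V_k$, $k>0$) already says that any two level-$0$ tiles in the tiling that touch do so along a union of $(d-1)$-cells of their fixed CW structures. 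With finitely many prototiles, each carrying a fixed finite CW structure, this forces finitely many two-tile patches up to translation, and finitely many two-tile patches is the standard equivalent formulation of FLC.

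So the clean version of what both you and the paper are gesturing at is: any two adjacent tiles of $\mathcal{T}_{\bar e}$ lie inside some approximant $\mathcal{P}_n(\bar e)$, where compatibility forces their intersection to be a union of $(d-1)$-cells; hence the tiling is face-to-face with finitely many prototile CW types, hence FLC. Your steps (1)--(2) and the supertile-cluster machinery can be dropped entirely.
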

\begin{proof}
  This follows from the compatibility condition, so there are finitely many local configurations.
\end{proof}
\subsection{Topology revisited}
\label{sec:renorm}
 Let $\mathcal{F} = \{\mathcal{F}_1,\dots, \mathcal{F}_N\}$ be a type H family. Given $B_x(\mathcal{F})$ we will denote by $\theta_{x_k}$ the scaling factor of the GIFS associated with the level $k$ of the Bratteli diagram $B_x(\mathcal{F})$.
\begin{lemma}
\label{lem:twopart}
  Let $B = B_x(\mathcal{F})$ be a Bratteli diagram with parameter $x$ for a type H family, and pick $\bar{e}\in X_{B}$. Then:
\begin{enumerate}
\item if $\bar{e}_1,\bar{e}_2\in [\bar{e}]$, then there exists a $\tau$ such that $\mathcal{T}_{\bar{e}_1} = \varphi_\tau(\mathcal{T}_{\bar{e}_2})$.
\item the tiling space $\mathcal{T}_{\bar{e}}$ only depends on the minimal component in $X_{B}$ containing $\bar{e}$.
\end{enumerate}
\end{lemma}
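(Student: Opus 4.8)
The plan is to prove part (i) directly from the definition of the tiling $\mathcal{T}_{\bar{e}}$ as the nested union $\bigcup_k \mathcal{P}_k(\bar{e})$, and then deduce part (ii) from part (i) together with the minimality of the tail-equivalence class inside a minimal component. For part (i), suppose $\bar{e}_1, \bar{e}_2 \in [\bar{e}]$, so there is a level $k_0$ from which the tails of $\bar{e}_1$ and $\bar{e}_2$ agree; call this common tail $\bar{g} = (g_{k_0+1}, g_{k_0+2}, \dots)$, and write $\bar{a} = \bar{e}_1|_{k_0}$, $\bar{b} = \bar{e}_2|_{k_0}$, both paths from $V_0$ to the same vertex $v = r(\bar{a}) = r(\bar{b}) = s(g_{k_0+1})$. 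For $k > k_0$, the $k^{th}$ approximant $\mathcal{P}_k(\bar{e}_1)$ is $f_{\bar{e}_1|_k}^{-1}(A_{r(\bar{e}_1|_k)})$, and since $\bar{e}_1|_k = \bar{a}$ followed by $(g_{k_0+1},\dots,g_k)$, we have $f_{\bar{e}_1|_k} = f_{(g_{k_0+1},\dots,g_k)} \circ f_{\bar{a}}$ by the composition formula (\ref{eqn:comp}); similarly for $\bar{e}_2$ with $f_{\bar{b}}$ in place of $f_{\bar{a}}$. Hence, writing $h_k := f_{(g_{k_0+1},\dots,g_k)}$ (the common ``tail map''), the supports satisfy $f_{\bar{e}_1|_k}^{-1}(A_v) = f_{\bar{a}}^{-1}\circ h_k^{-1}(A_v)$ and $f_{\bar{e}_2|_k}^{-1}(A_v) = f_{\bar{b}}^{-1}\circ h_k^{-1}(A_v)$, and more importantly the decomposition (\ref{eqn:approx}) into prototile copies transforms by the single affine map $\Psi := f_{\bar{a}}^{-1}\circ f_{\bar{b}}$: every tile $f_{\bar{e}_2|_k}^{-1}\circ f_{\bar{e}'}(A_{s(\bar{e}')})$ of $\mathcal{P}_k(\bar{e}_2)$ is carried by $\Psi$ to the corresponding tile of $\mathcal{P}_k(\bar{e}_1)$. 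Since $\mathcal{F}$ is uniformly affine scaling, $f_{\bar{a}}$ and $f_{\bar{b}}$ have the same linear part (namely $\theta_{x_1}\cdots\theta_{x_{k_0}}\,\mathrm{Id}$), so $\Psi$ is a pure translation by some vector $\tau \in \mathbb{R}^d$. Taking $k\to\infty$ and using that the approximants are nested and exhaust the supports, we conclude $\mathcal{T}_{\bar{e}_1} = \varphi_\tau(\mathcal{T}_{\bar{e}_2})$; note $\tau$ does not depend on $k$, which is exactly why the limit goes through.

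For part (ii), the statement is that $\Omega_{\mathcal{T}_{\bar{e}}}$ — the tiling space, i.e. the closure of the $\mathbb{R}^d$-orbit of $\mathcal{T}_{\bar{e}}$ — depends only on the minimal component $Y \subset X_B$ containing $\bar{e}$. First, part (i) shows that if $\bar{e}' \in [\bar{e}]$ then $\mathcal{T}_{\bar{e}'}$ is a translate of $\mathcal{T}_{\bar{e}}$, hence $\Omega_{\mathcal{T}_{\bar{e}'}} = \Omega_{\mathcal{T}_{\bar{e}}}$; so the tiling space is constant along tail-equivalence classes. Next I would show it is continuous, in the appropriate sense, along the diagram: if $\bar{f}_n \to \bar{f}$ in $X_B$, then $\bar{f}_n$ and $\bar{f}$ agree on longer and longer initial segments, so for each fixed $k$ the supertile patch $\mathcal{P}_k(\bar{f}_n)$ eventually equals $\mathcal{P}_k(\bar{f})$; thus $\mathcal{T}_{\bar{f}_n}$ agrees with $\mathcal{T}_{\bar{f}}$ on arbitrarily large balls for $n$ large, i.e. $\mathcal{T}_{\bar{f}_n} \to \mathcal{T}_{\bar{f}}$ in the tiling metric (\ref{eqn:distance}). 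In particular every $\mathcal{T}_{\bar{f}}$ with $\bar{f}\in Y$ is a limit of tilings $\mathcal{T}_{\bar{f}'}$ with $\bar{f}'$ in the tail class $[\bar{e}]$ (using that $[\bar{e}]$ is dense in $Y$ by minimality of the component), and such $\mathcal{T}_{\bar{f}'}$ are translates of $\mathcal{T}_{\bar{e}}$; hence $\mathcal{T}_{\bar{f}} \in \overline{\mathbb{R}^d\cdot \mathcal{T}_{\bar{e}}} = \Omega_{\mathcal{T}_{\bar{e}}}$, and therefore $\Omega_{\mathcal{T}_{\bar{f}}} \subseteq \Omega_{\mathcal{T}_{\bar{e}}}$. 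By symmetry (swapping the roles of $\bar{e}$ and $\bar{f}$, both of which have tail class dense in $Y$) we get the reverse inclusion, so $\Omega_{\mathcal{T}_{\bar{f}}} = \Omega_{\mathcal{T}_{\bar{e}}}$ for all $\bar{f}\in Y$, which is the claim.

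I expect the main obstacle to be the bookkeeping in part (i): one has to be careful that $\Psi = f_{\bar{a}}^{-1}\circ f_{\bar{b}}$ genuinely matches up the two decompositions (\ref{eqn:approx}) tile-for-tile, since a priori the index sets $E_{r(\bar{a})}$ and $E_{r(\bar{b})}$ coincide (both equal $E_v$) but the specific prototile copies placed by $f_{\bar{e}_1|_k}^{-1}\circ f_{\bar{e}'}$ versus $f_{\bar{e}_2|_k}^{-1}\circ f_{\bar{e}'}$ must be verified to differ exactly by $\Psi$ — this is where the composition identity (\ref{eqn:comp}) and the common tail $\bar{g}$ do the work. The only subtle point beyond bookkeeping is confirming that $\tau$ is independent of $k$ (so the translation relation survives the limit), but this is immediate once one observes $\Psi = f_{\bar{e}_1|_k}^{-1}\circ f_{\bar{e}_2|_k}$ for \emph{every} $k > k_0$, because the tail maps $h_k$ cancel. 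For part (ii), the continuity statement ``$\bar{f}_n\to\bar{f}$ implies $\mathcal{T}_{\bar{f}_n}\to\mathcal{T}_{\bar{f}}$'' should be routine given the exhaustion property $\mathcal{P}_k(\bar{e})\subset\mathcal{P}_{k+1}(\bar{e})$ established right after the definition of $\mathcal{T}_{\bar{e}}$, and the one thing to state carefully is that we are using minimality of the component precisely to guarantee density of a single tail class in $Y$.
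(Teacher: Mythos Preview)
Your proposal is correct and follows essentially the same approach as the paper's proof: for part (i) you both use that tail-equivalent paths have identical approximants from some level onward up to a fixed translation, and for part (ii) you both combine part (i) with the density of a tail class in its minimal component to approximate any $\mathcal{T}_{\bar f}$ by translates of $\mathcal{T}_{\bar e}$. Your version is in fact more explicit than the paper's in two places --- you compute the translation $\tau$ directly as $\Psi = f_{\bar a}^{-1}\circ f_{\bar b}$ and verify it is $k$-independent, and you spell out the symmetry argument giving both inclusions $\Omega_{\mathcal{T}_{\bar f}} \subseteq \Omega_{\mathcal{T}_{\bar e}}$ and $\Omega_{\mathcal{T}_{\bar e}} \subseteq \Omega_{\mathcal{T}_{\bar f}}$ --- whereas the paper states these points more tersely.
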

\begin{proof}
  Let $\bar{e} \neq \bar{e}'$ in $X_{B_x}$ be tail-equivalent: there exists a smallest $k\in\mathbb{N}$ such that $\bar{e}_i = \bar{e}'_i$ for all $i>k$. Consider the approximants $\mathcal{P}_k(\bar{e})$ and $\mathcal{P}_k(\bar{e}')$. By (\ref{eqn:approx}) both approximants are the set $A_{r(\bar{e}|_k)}$ scaled by $\theta_{x_1}^{-1}\cdots \theta_{x_k}^{-1}$ and are tiled by tiles in bijection with paths from $V_0$ to $r(\bar{e}|_k)=r(\bar{e}'|_k)$ in the same way. Thus there is a $\tau\in\mathbb{R}^d$ such that $\mathcal{P}_k(\bar{e}) = \varphi_\tau(\mathcal{P}_k(\bar{e}'))$ and $\partial\mathcal{P}_k(\bar{e}) = \varphi_\tau(\partial\mathcal{P}_k(\bar{e}'))$.

  The fact that $\bar{e}_i = \bar{e}'_i$ for all $i>k$ means that heirarchical structures $\mathcal{P}_{k}(\bar{e})\subset \mathcal{P}_{k+1}(\bar{e})\subset \mathcal{P}_{k+2}(\bar{e})\subset\cdots$ and $\mathcal{P}_{k}(\bar{e}')\subset \mathcal{P}_{k+1}(\bar{e}')\subset \mathcal{P}_{k+2}(\bar{e}')\subset\cdots$ are the same. Thus, in the limit, $\mathcal{T}_{\bar{e}}$ differs from $\mathcal{T}_{\bar{e}'}$ by a translation: $\mathcal{T}_{\bar{e}} = \varphi_\tau(\mathcal{T}_{\bar{e}'})$, which proves the first part.

  By (i), $[\bar{e}]$ can be identified with a set of translates $\varphi_{\tau_{\bar{e}'}}(\mathcal{T}_{\bar{e}})$ of $\mathcal{T}_{\bar{e}}$, for some $\tau_{\bar{e}'}$ depending on $\bar{e}'\in[\bar{e}]$. So $\Omega_{\mathcal{T}_{\bar{e}'}} = \Omega_{\mathcal{T}_{\bar{e}}} $ whenever $[\bar{e}] = [\bar{e}']$. Let $\bar{e}'\in\overline{[\bar{e}]}$ but $\bar{e}'\not\in [\bar{e}]$. Then there exists a sequence $\{\bar{e}^k\}$ in $[\bar{e}]$ converging to $\bar{e}'$ in $\overline{[\bar{e}]}$ with $\bar{e}^k_i = \bar{e}'_i$ for all $i\leq k$. This means that $\mathcal{P}_k(\bar{e}^k) = \mathcal{P}_k(\bar{e}')$ as tiled patches for all $k\in\mathbb{N}$. Thus $d(\mathcal{T}_{\bar{e}'},\mathcal{T}_{\bar{e}^k}) = d(\mathcal{T}_{\bar{e}'}, \varphi_{\tau_{\bar{e}^k}}(\mathcal{T}_{\bar{e}}))\rightarrow 0$, so $\mathcal{T}_{\bar{e}'}\in \Omega_{\mathcal{T}_{\bar{e}}}$.
\end{proof}
Given $\bar{e}\in \mathring{X}_{B_x(\mathcal{F})}$, recall the definition of the tiling $\mathcal{T}_{\bar{e}}$ as defined in (\ref{eqn:tiling}). Since we are assuming that the shared attractor of $\mathcal{F}$ contains the origin, then the origin is contained in the interior of every single prototile, which we can treat as a distinguished point. As such, we have that if $\mathcal{T}_{\bar{e}}$ tiles all of $\mathbb{R}^d$ then $\mathcal{T}_{\bar{e}}\in \mho_{\mathcal{T}_{\bar{e}}}$. Let $\Delta_x:\mathring{X}_{B_x(\mathcal{F})}\rightarrow \mho_{\mathcal{T}_{\bar{e}}}$ be the map $\Delta_x(\bar{e}) = \mathcal{T}_{\bar{e}}$ . This is called the Robinson map in \cite{kellendonk:gap} where the following type of result can be found.
\begin{proposition}
  \label{prop:Robinson1}
  Let $\mathcal{F}= \{\mathcal{F}_1,\dots,\mathcal{F}_N\}$ be a type H family and suppose that $B_x(\mathcal{F})$ is minimal. The Robinson map $\Delta_x:\mathring{X}_{B_x}\rightarrow \mho_x$ is a continuous map onto its image which defines a bijection between Borel probability measures $\mu$ on $X_{B_x(\mathcal{F})}$ which are invariant for the tail-equivalence relation and satisfy $\mu(\Sigma_{B_x})=0$ with Borel transverse invariant probability measures for the $\mathbb{R}^d$ action on $\Omega_{\mathcal{T}_{\bar{e}}}$ supported on the canonical transversal $\mho_{\mathcal{T}_{\bar{e}}}$.
\end{proposition}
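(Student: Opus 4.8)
The plan is to follow the analysis of the Robinson map in \cite{kellendonk:gap}, adapted to the non-stationary diagram $B_x(\mathcal{F})$. Three things have to be checked: that $\Delta_x$ is continuous on $\mathring{X}_{B_x}$; that, up to a null set, $\Delta_x$ is a Borel isomorphism onto $\mho_x$; and that under this identification the tail-equivalence-invariant measures $\mu$ with $\mu(\Sigma_{B_x})=0$ correspond exactly to the transverse invariant probability measures on $\mho_x$.

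Continuity I would prove directly from the blowup formula (\ref{eqn:approx}). Closeness in the metric (\ref{eqn:distance}) is implied by agreement of two tilings on a sufficiently large ball about the origin, so fixing $\bar e\in\mathring{X}_{B_x}$ and $\varepsilon>0$ it suffices to exhibit a cylinder neighbourhood of $\bar e$ whose image consists of tilings agreeing with $\mathcal{T}_{\bar e}$ on a ball of some radius $R=R(\varepsilon)$ about the origin. Since $\bar e\in\mathring{X}_{B_x}$, the approximants $\mathcal{P}_k(\bar e)$ form an increasing union of tiles covering $\mathbb{R}^d$, so for $k$ large every one of the finitely many tiles of $\mathcal{T}_{\bar e}$ meeting that ball already lies in $\mathcal{P}_k(\bar e)$. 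For $\bar f\in C_{\bar e|_k}$ one has $f_{\bar f|_k}=f_{\bar e|_k}$ and $r(\bar f|_k)=r(\bar e|_k)$, so by (\ref{eqn:approx}) the tiled patches $\mathcal{P}_k(\bar f)$ and $\mathcal{P}_k(\bar e)$ are literally equal (not merely translates of one another); as each is a subpatch of its tiling, $\mathcal{T}_{\bar f}$ agrees with $\mathcal{T}_{\bar e}$ on the ball and $d(\mathcal{T}_{\bar e},\mathcal{T}_{\bar f})<\varepsilon$.

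For the bijection I would invoke a \emph{recognizability} property for type H families: when $B_x$ is minimal, every tiling in the tiling space $\Omega_{\mathcal{T}_{\bar e}}$ carries, for each $k$, a unique partition into level-$k$ supertiles, and these supertiles are exactly the standard patches $f_{\bar e'}^{-1}(A_v)$, $v\in V_k$. This is the non-stationary counterpart of the recognizability theorem for aperiodic FLC substitution tilings, and it is where aperiodicity and the compatibility condition enter. Granting it, a tiling $\mathcal{T}'\in\mho_x$ determines, for each $k$, the address of the origin's tile inside its level-$k$ supertile, hence a path $\bar f'\in X_B$ with $\mathcal{T}_{\bar f'}$ equal to the nested union of these supertiles; moreover $\mathcal{T}_{\bar f'}=\mathcal{T}'$ exactly when those supertiles exhaust $\mathbb{R}^d$, i.e.\ when $\bar f'\in\mathring{X}_{B_x}$. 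Thus $\Delta_x$ is injective on $\mathring{X}_{B_x}$, with image all of $\mho_x$ except the tilings having a ``fault'' through the origin; since $\mathring{X}_{B_x}$ is a Borel subset of the Polish space $X_B$, the Lusin--Souslin theorem makes $\Delta_x$ a Borel isomorphism onto a Borel subset of $\mho_x$. The boundary-supertile counting that underlies Lemma \ref{lem:limsup} (the proportion of level-$k$ supertiles touching the boundary of a level-$k$ supertile tends to $0$) shows the excluded fault set is null for \emph{every} transverse invariant probability measure, and symmetrically forces $\mu(\Sigma_{B_x})=0$. It is convenient to record that for $\bar e\in E_{0,k}$ the image $\Delta_x(C_{\bar e})$ is the clopen acceptance domain consisting of those $\mathcal{T}'\in\mho_x$ whose level-$k$ supertile at the origin is the marked patch $\mathcal{P}_k(\bar e)$.

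Finally the measure correspondence. Given tail-invariant $\mu$ with $\mu(\Sigma_{B_x})=0$, put $\nu:=(\Delta_x)_*\mu$; then $\nu(\Delta_x(C_{\bar e}))=\mu(C_{\bar e})=\mu(r(\bar e))$ depends only on the range, and for $r(\bar e)=r(\bar e')$ the acceptance domains $\Delta_x(C_{\bar e})$ and $\Delta_x(C_{\bar e'})$ are carried onto one another by a fixed translation lying in the holonomy pseudogroup of the tiling foliation on $\mho_x$, so this range-dependence is precisely the holonomy (hence transverse) invariance of $\nu$. Conversely, given a transverse invariant $\nu$, define $\mu(C_{\bar e}):=\nu(\Delta_x(C_{\bar e}))$; finite additivity of $\nu$ over $\Delta_x(C_{\bar e})=\bigsqcup_{e'}\Delta_x(C_{\bar e e'})$ is the consistency needed to extend $\mu$ to a Borel probability measure on $X_B$, transverse invariance gives tail-invariance, and the null-set estimate gives $\mu(\Sigma_{B_x})=0$; the two assignments are mutually inverse because $\Delta_x$ is a Borel isomorphism onto a $\nu$-conull subset of $\mho_x$. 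The main obstacle is the recognizability input in the third paragraph: in the self-similar case it is a theorem of Solomyak, but here the substitution changes from level to level, so one must run the recognizability argument along the non-stationary diagram while keeping track of the compatibility condition; everything else is bookkeeping with clopen sets and the standard dictionary between transverse invariant measures and patch frequencies.
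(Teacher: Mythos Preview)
Your argument is broadly correct and follows the same skeleton as the paper's, but it is considerably more detailed and inserts an extra ingredient the paper deliberately avoids.

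The continuity paragraph matches the paper exactly: the paper simply says ``follows directly by considering sequences in $\mathring{X}_{B_x}$, their approximants, and their images through $\Delta_x$,'' and your argument via $\mathcal{P}_k(\bar f)=\mathcal{P}_k(\bar e)$ on cylinder neighbourhoods is precisely the unpacking of that sentence.

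The substantive divergence is your third paragraph. You build a Borel \emph{isomorphism} by invoking a non-stationary recognizability theorem, and you yourself flag this as the main obstacle. The paper does not go there: it never claims $\Delta_x$ is injective, only that it is a continuous surjection onto $\mho_x$, and it establishes the measure bijection directly by the computation $(\Delta_x)_*\mu(C(\mathcal{P}_k(\bar e)))=\mu(\Delta_x^{-1}(C(\mathcal{P}_k(\bar e))))=\mu(C_{\bar e})$, together with Lemma~\ref{lem:twopart}(i) to identify the holonomy translate $\varphi_\tau(C(\mathcal{P}_k(\bar e)))$ with $C(\mathcal{P}_k(\bar e'))$ when $r(\bar e)=r(\bar e')$. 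The equality $\Delta_x^{-1}(C(\mathcal{P}_k(\bar e)))=C_{\bar e}$ (modulo $\Sigma_{B_x}$) is exactly the recognizability-type statement you isolate, but the paper treats it as immediate from the approximant construction and the reference to \cite{kellendonk:gap}, rather than as a theorem to be proved. So what you gain by making recognizability explicit is honesty about where the difficulty lies; what you lose is that you have introduced a nontrivial input that, in this non-stationary setting, is not available off the shelf, whereas the paper's route needs only the set-level identity of preimages of acceptance domains with cylinders.

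One caution on your null-set step: you invoke ``the boundary-supertile counting that underlies Lemma~\ref{lem:limsup}'' to show the fault set is null for \emph{every} transverse invariant measure and to force $\mu(\Sigma_{B_x})=0$ in the converse direction. Lemma~\ref{lem:limsup} carries the hypothesis~(\ref{eqn:lambdaCond}), which is not assumed in the proposition, so this appeal is not automatic. The paper sidesteps the issue: it takes $\mu(\Sigma_{B_x})=0$ as a hypothesis on the Bratteli side and simply asserts that the inverse direction ``is similarly proved,'' without a separate null-set estimate.
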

\begin{proof}
  Since $B_x$ is minimal, by the second part of Lemma \ref{lem:twopart}, the tiling space is independent of which $\bar{e}\in\mathring{X}_{B_x}$ is used to construct a tiling $\mathcal{T}_{\bar{e}}$ and then a tiling space. As such, the canonical transversal $\mho_x$ is also independent of this choice. That the map $\Delta_x$ is a continuous surjection onto $\mho_x$ follows directly by considering sequences in $\mathring{X}_{B_x}$, their approximants, and their images through $\Delta_x$.


  Now consider a Borel probability measure $\mu$ which is invariant under the tail-equivalence relation with $\mu(\Sigma_{B_x}) = 0$,.
By definition, we have that if $\bar{e}$ and $\bar{e}'$ have the property that $s(\bar{e}),s(\bar{e}')\in V_0$ and $r(\bar{e})=r(\bar{e}')$, then $\mu(C_{\bar{e}})=\mu(C_{\bar{e}'})$. A Borel probability measure $\nu$ on the canonical transversal is invariant under the $\mathbb{R}^d$ action if for any open set $C\subset \mho_x$ and vector $\tau$ with $\varphi_\tau(C)\subset \mho_x$ we have that $\nu(\varphi_\tau(C)) = \nu(C)$. Thus we verify the pushforward of $(\Delta_x)_*\mu$ on open sets of the form $C(\mathcal{P}_k(\bar{e}))$ and $C(\mathcal{P}_k(\bar{e}'))$ with $s(\bar{e}),s(\bar{e}')\in V_0$ and $r(\bar{e}) = r(\bar{e}')$:
  \begin{equation*}
    \begin{split}
      (\Delta_x)_*\mu(C(\mathcal{P}_k(\bar{e}))) &= \mu(\Delta_x^{-1}(C(\mathcal{P}_k(\bar{e})))) = \mu(C_{\bar{e}})  =  \mu(C_{\bar{e}'}) =\mu(\Delta_x^{-1}(C(\mathcal{P}_k(\bar{e}'))))\\
      &= (\Delta_x)_*\mu(C(\mathcal{P}_k(\bar{e}'))) = (\Delta_x)_*\mu(\varphi_\tau(C(\mathcal{P}_k(\bar{e})))),
      \end{split}
  \end{equation*}
  where we used the invariance of $\mu$ in the third equality and part (i) of Lemma \ref{lem:twopart} in the last one. So $(\Delta_x)_*\mu$ is invariant for the $\mathbb{R}^d$ action. That the inverse $\Delta_x^{-1}$ sends invariant measures to invariant measures for which $\Sigma_{B_x}$ is a null set is similarly proved.
\end{proof}

\section{Bi-infinite diagrams and hierarchical structures}
\label{sec:biinf}
We now extend the construction of tilings using Bratteli diagrams to bi-infinite Bratteli diagrams. A key application of this construction appears in Proposition \ref{prop:conj}, where, using Proposition \ref{prop:Robinson1}, we connect the shift map $\sigma$ on the parameter space $\Sigma_{N}$ (the full shift on $N$ symbols) to an induced map between tiling spaces $\Omega_{x} \to \Omega_{\sigma(x)}$. We follow the conventions of \cite{LT,trevino:masur}. Recall that $\bar{\mathbb{Z}} = \mathbb{Z}-\{0\}$, and note $\bar{\mathbb{Z}}$ inherits an order from the order on $\mathbb{Z}$.
\subsection{Bi-infinite diagrams}
A \textbf{bi-infinite Bratteli diagram} $\mathcal{B}=(\mathcal{V},\mathcal{E})$ is an infinite graph with vertex and edge sets partitioned as
$$\mathcal{V} = \bigsqcup_{k\in\mathbb{Z}}\mathcal{V}_k\hspace{.3in}\mbox{ and }\hspace{.3in}\mathcal{E} = \bigsqcup_{k\in\bar{\mathbb{Z}}}\mathcal{E}_k,$$
along with range and source maps $r,s:\mathcal{E}\rightarrow\mathcal{V}$ which are defined as $r:\mathcal{E}_k\rightarrow \mathcal{V}_{k}$ for $k>0$ and $s:\mathcal{E}_k\rightarrow \mathcal{V}_{k}$ for $k<0$, while $s:\mathcal{E}_k\rightarrow \mathcal{V}_{k-1}$ for $k>0$ and $r:\mathcal{E}_k\rightarrow \mathcal{V}_{k+1}$ for $k<0$. The definitions of paths from \S \ref{subsec:brat} are generalized in the natural way to the bi-infinite case.

For a bi-infinite Bratteli diagram $\mathcal{B}$, we denote by $X_{\mathcal{B}}$ the set of infinite paths in $\mathcal{B}$, i.e.,
$$X_{\mathcal{B}} = \left\{\bar{e} = (\dots, e_{k-1},e_k,e_{k+1},\dots) \in \prod_{k\in\bar{\mathbb{Z}}}\mathcal{E}_k: r(e_i) = s(e_{i+1})\mbox{ for all }i\in\bar{\mathbb{Z}}\right\}.$$
The topology of $X_{\mathcal{B}(\mathcal{F})}$ is generated by cylinder sets of the form $C_{\bar{e}}$, where $\bar{e}$ is a finite path in $\mathcal{B}(\mathcal{F})$.
\begin{definition}
Let $\mathcal{B}$ be a bi-infinite Bratteli diagram. The \textbf{positive part of $\mathcal{B}$}, denoted by $\mathcal{B}^+$, is the (not bi-infinite) Bratteli diagram $B$ where the vertices, edges, and source and range maps are the same as those of $\mathcal{B}$ when we restrict to sets with non-negative indices. Likewise, the \textbf{negative part of $\mathcal{B}$}, denoted by $\mathcal{B}^-$, is obtained by restricting to sets with negative indices ignoring all the sets with positive indices in $\mathcal{B}$ and then reversing the sign of the indices of the sets left.
\end{definition}

\subsection{Hierarchical structures}
Let $\Sigma_N = \{1,\dots, N\}^{\bar{\mathbb{Z}}}$, where $\bar{\mathbb{Z}} := \mathbb{Z}-\{0\}$, inheriting an order from that of $\mathbb{Z}$. Given a type H family $\mathcal{F} = \{\mathcal{F}_1,\dots, \mathcal{F}_N\}$ and $x\in \Sigma_N$ we consider the bi-infinite Bratteli diagram $\mathcal{B}_{x}(\mathcal{F})$ by defining its $k^{th}$ transition matrix $\mathcal{M}_k$ to be $\mathcal{M}(\mathcal{F}_i)$ if $x_k = i$.

Recall that, assuming $\mathcal{B}^+_x$ is minimal, paths starting at $V_0$ in $\mathcal{B}^+_x$ record the hierarchical structure of a tiling in $\mho_x$. This is done through the approximants: if $\bar{e}\in \mathring{X}_{\mathcal{B}^+_x}$, then the hierarchical structure of the tiling $\Delta_x(\bar{e})$ is described by the inclusions
$$\{0\} \subset A_{s(\bar{e})}\subset \mathcal{P}_1(\bar{e})\subset \cdots \subset \mathcal{P}_k(\bar{e})\subset \mathcal{P}_{k+1}(\bar{e})\subset \cdots.$$

Following the philosophy of \cite{BowenMarcus}, paths in the negative part of $\mathcal{B}_x$ describe the transverse structure of the object described by paths in the positive part. By Proposition \ref{prop:Robinson1}, paths in $\mathcal{B}^+_x$ describe the local structure of $\mho_x$, so considering the local product structure of $\Omega_x$ in Proposition \ref{prop:productStruct}, then the paths in $\mathcal{B}^-_x$ ought to describe the local structure of the leaves which foliate $\Omega_x$. We now describe how this is done.

Let $\bar{e}$ be a path with $s(\bar{e})\in \mathcal{V}_k$, $r(\bar{e})\in \mathcal{V}_0$ and $k<0$. As in (\ref{eqn:comp}), there is a map $f_{\bar{e}}$ which maps $A_{s(\bar{e})}$ into $A_{r(\bar{e})}$. In fact, as in (\ref{eqn:approx}), for any $k<0$ and $v\in\mathcal{V}_0$, the prototile $A_{v}$ is tiled by tiles indexed by all paths $\bar{e}$ with $s(\bar{e})\in \mathcal{V}_k$ and $r(\bar{e}) = v$:
$$A_v = \bigcup_{\{\bar{e}:s(\bar{e})\in\mathcal{V}_k, r(\bar{e}) = v \}} f_{\bar{e}}(A_{s(\bar{e})}).$$
Since all maps $f$ in the family $\mathcal{F}$ are affine and contracting, the prototile $A_v$ can be partitioned by smaller and smaller tiles by considering longer and longer paths ending in $v\in \mathcal{V}_0$. As such, any point in $A_v$ has a (not necessarily unique) address given by an infinite path in the negative part of $\mathcal{B}$, given by a surjective function $p_x:X_{\mathcal{B}^-_x}\rightarrow \bigcup_{v\in\mathcal{V}_0}A_v$. Define
$$\mathring{X}_{\mathcal{B}_x}:= \{\bar{e} = (\dots, e_{-2},e_{-1},e_1,e_2,\dots)\in X_{\mathcal{B}_x}: \bar{e}^+:=(e_1,e_2,\dots) \not \in \Sigma_{\mathcal{B}^+_x}\}.$$
We get the extension of Proposition \ref{prop:Robinson1} in the bi-infinite case.
\begin{proposition}
  \label{prop:Robinson2}
  The Robinson map $\Delta_x:\mathring{X}_{\mathcal{B}^+_x}\rightarrow \mho_x$ extends to a continuous map $\bar{\Delta}_x:\mathring{X}_{\mathcal{B}_x}\rightarrow \Omega_x$.
\end{proposition}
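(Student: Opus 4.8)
The plan is to upgrade the Robinson map on the positive part to a map on the full bi-infinite path space by using a negative path to record \emph{where in a prototile} the origin sits, i.e. by translating the tiling $\Delta_x(\bar{e}^+)$ so that the origin lands at the address prescribed by $\bar{e}^- := (e_{-1},e_{-2},\dots)$. Concretely, for $\bar{e}\in\mathring{X}_{\mathcal{B}_x}$ with $s(e_1)=v\in\mathcal{V}_0$, the sequence of nested inclusions $f_{(e_{-1})}(A_{s(e_{-1})}) \supset f_{(e_{-1},e_{-2})}(A_{s(e_{-2})})\supset\cdots$ inside $A_v$ is a decreasing sequence of compact sets whose diameters shrink geometrically (since every $f$ in $\mathcal{F}$ is an affine contraction, cf. Definition \ref{def:CUASfam}), hence intersects in a single point $p_x(\bar{e}^-)\in A_v$. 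Recall $\Delta_x(\bar{e}^+) = \mathcal{T}_{\bar{e}^+}$ lies in $\mho_x$, so the origin is the distinguished point of the tile of $\mathcal{T}_{\bar{e}^+}$ containing it, and that tile is a copy of $A_v$ with its distinguished point at the origin. We then define
$$\bar{\Delta}_x(\bar{e}) := \varphi_{-p_x(\bar{e}^-)}\bigl(\Delta_x(\bar{e}^+)\bigr) = \mathcal{T}_{\bar{e}^+} - p_x(\bar{e}^-)\in\Omega_x.$$
The first step is to check this is well-defined as a point of $\Omega_x$: by Lemma \ref{lem:twopart}(ii) and Proposition \ref{prop:Robinson1} the tiling space $\Omega_x$ does not depend on the choice of $\bar{e}^+\in\mathring{X}_{\mathcal{B}^+_x}$, and $p_x(\bar{e}^-)$ lies in a prototile which appears (as a tile containing the origin at its distinguished point) in $\mathcal{T}_{\bar{e}^+}$, so the translate indeed lies in $\Omega_x$. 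When $\bar{e}^-$ stays within a single negative vertex class, restricting to paths with $e_1$ ranging over $\mathcal{V}_0$ recovers all of $\mho_x$ at $p_x(\bar{e}^-)=0$, which is the compatibility with $\Delta_x$ asserted in the statement.

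The second step is continuity. Suppose $\bar{e}^{(n)}\to\bar{e}$ in $X_{\mathcal{B}_x}$. Then for each $m$ the finite path $\bar{e}^{(n)}|_{[-m,m]}$ eventually agrees with $\bar{e}|_{[-m,m]}$. Continuity of the positive Robinson map $\Delta_x$ (Proposition \ref{prop:Robinson1}) gives $d(\mathcal{T}_{\bar{e}^{(n),+}},\mathcal{T}_{\bar{e}^+})\to 0$ — more precisely the approximants $\mathcal{P}_m(\bar{e}^{(n),+})$ and $\mathcal{P}_m(\bar{e}^+)$ coincide for large $n$ — while the geometric shrinking of the negative-address boxes gives $|p_x(\bar{e}^{(n),-}) - p_x(\bar{e}^-)|\le C\theta_*^{m}$ uniformly once the first $m$ negative edges agree, where $\theta_* = \max_i\theta_i<1$. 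Combining, $\bar{\Delta}_x(\bar{e}^{(n)}) = \mathcal{T}_{\bar{e}^{(n),+}} - p_x(\bar{e}^{(n),-})$ converges to $\mathcal{T}_{\bar{e}^+} - p_x(\bar{e}^-) = \bar{\Delta}_x(\bar{e})$ in the tiling metric \eqref{eqn:distance}, since that metric is continuous in both the underlying tiling (through agreement of large approximant patches) and the translation parameter.

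The main obstacle I anticipate is the non-uniqueness of the address map $p_x$: a point on the boundary of a prototile cell has several negative-path addresses, and more seriously a negative path can ``run into the boundary'' in the sense analogous to $\Sigma_{\mathcal{B}^+_x}$, so that the nested boxes degenerate in dimension or the associated point sits on $\partial A_v$ where the local product structure of $\Omega_x$ (Proposition \ref{prop:productStruct}) is delicate. One must argue that even at such points the translate $\mathcal{T}_{\bar{e}^+} - p_x(\bar{e}^-)$ is a genuine tiling in $\Omega_x$ (it is, because $\mathcal{T}_{\bar{e}^+}$ tiles all of $\mathbb{R}^d$ by the assumption $\bar{e}^+\notin\Sigma_{\mathcal{B}^+_x}$, so translating it by any vector keeps it in its own tiling space, which equals $\Omega_x$), and that continuity is not spoiled — here the key point is that the tiling metric only sees the translated tiling, not the chosen address, so two negative paths with the same image point give the same value of $\bar\Delta_x$, and sequences of addresses converge whenever the corresponding points do. Handling this cleanly, rather than the bookkeeping with approximants, is where the real care is needed; the rest is the same continuity-of-approximants argument already used for Proposition \ref{prop:Robinson1} together with the elementary fact that translation is continuous on $\Omega_x$.
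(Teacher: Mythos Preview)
Your proposal is correct and follows exactly the paper's approach: use the positive part $\bar{e}^+$ to land in $\mho_x$ via $\Delta_x$, use the negative part $\bar{e}^-$ to produce the address point $p_x(\bar{e}^-)\in A_{s(e_1)}$, and translate. The paper's own proof is in fact terser than yours---it simply asserts continuity (``this assignment can be seen to be continuous'') without the explicit $\theta_*^m$ estimate or the discussion of boundary addresses---so your write-up, modulo minor notational slips (e.g.\ ``$e_1$ ranging over $\mathcal{V}_0$'' should be ``$s(e_1)$ ranging over $\mathcal{V}_0$''), is a fleshed-out version of the same argument.
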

\begin{proof}
Let $\bar{e} = (\dots, e_{-2}, e_{-1}, e_1, e_2, \dots)\in \mathring{X}_{\mathcal{B}_x}$ and denote by $\bar{e}^+\in \mathring{X}_{\mathcal{B}^+_x}$ its restriction to $\mathcal{B}^+_x$. Proposition \ref{prop:Robinson2} assigns to every path $\bar{e}^+\not\in \Sigma_{\mathcal{B}_x^+}$ in the positive part of $\mathcal{B}$ a unique tiling $\Delta_x(\bar{e}^+) = \mathcal{T}_{\bar{e}^+}$ in $\mho_x$ where the origin is contained in the interior of the tile containing the origin (which is $A_{s(\bar{e}^+)}$). So considering the positive part of $\bar{e}$ we know which tiling in the canonical transversal we obtain. As described in the paragraph above, an infinite path $\bar{e}^-$ in the negative part (that is, terminating in $\mathcal{V}_0$) defines a point $p_x(\bar{e}^-)\in A_{r(\bar{e}^-)}$. Since the prototile corresponding to the range of the negative part of the path $\bar{e}$ is the prototile containing the origin given by the positive part, we can translate the tiling $\mathcal{T}_{\bar{e}^+}$ by a small vector so that the origin can be identified with the point $p_x(\bar{e}^-)\in A_{r(e_{-1})} = A_{s(e_{1})}$. This assignment can be seen to be continuous.
\end{proof}
\subsection{Renormalization}
\label{subsec:renorm}
Recall $\Sigma_N = \{1,\dots, N\}^{\bar{\mathbb{Z}}}$, where $\bar{\mathbb{Z}} := \mathbb{Z}-\{0\}$, inheriting an order from that of $\mathbb{Z}$. In that case, $\sigma:\Sigma_N\rightarrow \Sigma_N$ denotes the full $N$-shift, obtained by shifting the labels by one in the entries of $x\in\Sigma_N$. Let $\bar{\sigma}:X_{\mathcal{B}_x}\rightarrow X_{\mathcal{B}_{\sigma(x)}}$ denote the continuous map sending a path $\bar{e}\in X_{\mathcal{B}_x}$ to itself in $X_{\mathcal{B}_{\sigma(x)}}$, where it is viewed with different indices.
\begin{definition}
  Let $\mathcal{F} = \{\mathcal{F}_1,\dots, \mathcal{F}_N\}$ be a type H family. A $\sigma$-invariant ergodic probability measure $\mu$ on $\Sigma_N$ is \textbf{minimal with respect to $\mathcal{F}$} if $B_x(\mathcal{F})$ is minimal for $\mu$-almost every $x$.
\end{definition}
If $\mu$ is minimal with respect to some type H family $\mathcal{F}$ we will only say that it is minimal when it is clear from context that we refer to $\mathcal{F}$.
\begin{remark}
  Note that if each substitution rule $\mathcal{F}_1,\dots,\mathcal{F}_N$ is primitive, then we should expect $B_x(\mathcal{F})$ to be minimal. In such case any ergodic $\sigma$-invariant probability measure will be minimal with respect to $\mathcal{F}$. The definition becomes interesting when not all $\mathcal{F}_1,\dots, \mathcal{F}_N$ are primitive substitutions, but enough random combinations of them give minimal Bratteli diagrams.
\end{remark}
Note that being minimal is a $\sigma$-invariant condition: if $B_x(\mathcal{F})$ is minimal then so is $B_{\sigma(x)}(\mathcal{F})$. As such, the set of minimal $B_x(\mathcal{F})$ is $\sigma$-invariant, so they have either full or zero measure for any ergodic invariant probability measure $\mu$. This observation, combined with the Poincar\'e recurrence theorem and the main result of \cite{trevino:masur}, gives the following.
\begin{proposition}
  \label{prop:UE}
Let $\mathcal{F} = \{\mathcal{F}_1,\dots, \mathcal{F}_N\}$ be a type H family and let $\mu$ be an minimal, ergodic $\sigma$-invariant probability measure on $\Sigma_N$. Then for $\mu$-almost every $x\in\Sigma_N$ the $\mathbb{R}^d$ action on $\Omega_x$ is uniquely ergodic.
\end{proposition}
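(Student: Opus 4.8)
The plan is to deduce unique ergodicity of the $\mathbb{R}^d$ action on $\Omega_x$ from uniqueness of the tail-invariant probability measure on $B_x(\mathcal{F})$, and to get that uniqueness from the Masur-type unique ergodicity criterion of \cite{trevino:masur} by feeding it a recurrence statement produced by Poincar\'e recurrence. The observation that the set of $x$ with $B_x(\mathcal{F})$ minimal is $\sigma$-invariant of full $\mu$-measure is precisely what is used to manufacture the set to which one recurs.

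Step one is to extract a good word. Recalling that minimality of a one-sided Bratteli diagram amounts to the condition that for every level $m$ there is a level $m'>m$ with the transition-matrix product from level $m$ to $m'$ strictly positive, taking $m=0$ shows that every $x$ with $B_x(\mathcal{F})$ minimal belongs to $\mathcal{G}_\ell:=\{x\in\Sigma_N : \mathcal{M}(\mathcal{F}_{x_\ell})\cdots\mathcal{M}(\mathcal{F}_{x_1})>0\text{ entrywise}\}$ for some $\ell$. Since each matrix $\mathcal{M}(\mathcal{F}_i)$ has no zero row and no zero column (every prototile is subdivided and every scaled prototile is used), the clopen sets $\mathcal{G}_\ell$ increase with $\ell$, so $\mathcal{G}:=\bigcup_\ell\mathcal{G}_\ell$ has full $\mu$-measure; hence $\mu(\mathcal{G}_{\ell_0})>0$ for some $\ell_0$, and, as $\mathcal{G}_{\ell_0}$ is a finite union of length-$\ell_0$ cylinders, there is a word $w^*$ with $\mu([w^*])>0$ whose matrix product $M^*:=\mathcal{M}(\mathcal{F}_{w^*_{\ell_0}})\cdots\mathcal{M}(\mathcal{F}_{w^*_1})$ is strictly positive.

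Step two is to recur. Applying Poincar\'e recurrence together with ergodicity of $\mu$ to the positive-measure cylinder $[w^*]$, for $\mu$-almost every $x$ there are infinitely many disjoint windows $[n_j+1,n_j+\ell_0]$, $n_1<n_2<\cdots$, along which $x$ reads $w^*$. Telescoping $B_x(\mathcal{F})$ at the levels $n_j+\ell_0$ yields a Bratteli diagram with the same tail-equivalence relation, hence the same tail-invariant measures, whose $j$-th transition matrix is $C_j=A_jM^*$ for a nonnegative integer matrix $A_j$. Each $C_j$ is strictly positive, and a short computation — in which the dependence on $A_j$ cancels out of the relevant cross-ratios — bounds the Birkhoff projective diameter of $C_j$ by a constant depending only on $M^*$. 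Thus the telescoped diagram returns infinitely often to a compact family of strictly positive matrices of uniformly bounded projective distortion, which is exactly the recurrence hypothesis of the criterion of \cite{trevino:masur}; that criterion then gives that $B_x(\mathcal{F})$ carries a unique tail-invariant Borel probability measure $\nu$.

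Step three is to pass to $\Omega_x$. The hypotheses of Lemma \ref{lem:limsup} hold for minimal diagrams (one has $\lambda^+=\lambda^->0$, as noted after that lemma), so $\nu(\limsup\partial V_k)=0$ and therefore $\nu(\Sigma_{B_x})=0$; Proposition \ref{prop:Robinson1} then turns $\nu$ into the unique transverse invariant probability measure for the $\mathbb{R}^d$ action on $\Omega_x$, supported on $\mho_x$. Since $B_x(\mathcal{F})$ is minimal the $\mathbb{R}^d$ action on $\Omega_x$ is minimal, and by the local product structure of Proposition \ref{prop:productStruct} every $\mathbb{R}^d$-invariant probability measure on $\Omega_x$ disintegrates over the transversal as a transverse invariant measure times Lebesgue measure along the leaves; uniqueness of the transverse measure therefore forces unique ergodicity. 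The step I expect to be the main obstacle is aligning the recurrence extracted in step two precisely with the hypotheses of \cite{trevino:masur} — checking that ``bounded projective distortion of the telescoped blocks'' is the operative notion of recurrence to a compact set there, and that the criterion applies to $\mathbb{R}^d$-tilings and not merely to $d=1$ translation flows; handling the singular set $\Sigma_{B_x}$ cleanly is a lesser but still necessary point.
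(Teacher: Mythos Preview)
Your proposal is correct and follows the same route the paper indicates: the paper gives no detailed argument, only the remark preceding the statement that the result follows from combining the $\sigma$-invariance of minimality with Poincar\'e recurrence and the main result of \cite{trevino:masur}, which is precisely what your Steps~1--2 implement. Your Step~3 supplies the standard transfer from the Bratteli diagram to $\Omega_x$; the only minor discrepancy is that you invoke Lemma~\ref{lem:limsup} there, which the paper explicitly says is not needed for the main theorem (presumably the passage to the tiling space is already part of the statement in \cite{trevino:masur}), but this is harmless.
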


Let $\Phi_{\bar{e}}$ be the map which assigns to the tiling $\mathcal{T}_{\bar{e}}$ the tiling $\mathcal{T}_{\sigma(\bar{e})}$. This extends to a nice map on the tiling space of $\mathcal{T}_{\bar{e}}$.
\begin{proposition}
\label{prop:conj}
Let $\mathcal{F} = \{\mathcal{F}_1,\dots, \mathcal{F}_N\}$ be a type H family and let $x\in\Sigma_N$ be such that $\mathcal{B}_x^+(\mathcal{F})$ is minimal. The shift map $\sigma:\Sigma_N\rightarrow\Sigma_N$ induces a homeomorphism $\Phi_{x}: \Omega_{x} \rightarrow \Omega_{\sigma(x)}$ which satisfies the conjugacy equation
\begin{equation}
  \label{eqn:conj}
  \Phi_{x}\circ\varphi_t = \varphi_{\theta_{x_1} t}\circ\Phi_{x}.
\end{equation}
The hierarchical structure shifts under the map $\Phi_x$: if $t^{(1)}$ is a level-1 supertile in some tiling  $\mathcal{T}\in\Omega_x$ then $\theta_{x_1}t$ is a tile in the tiling $\Phi_x(\mathcal{T})$.
\end{proposition}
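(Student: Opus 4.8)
The plan is to build the homeomorphism $\Phi_x$ directly through the Robinson maps $\bar{\Delta}_x$ and $\bar{\Delta}_{\sigma(x)}$ of Proposition~\ref{prop:Robinson2}, using the bijection $\bar{\sigma}:X_{\mathcal{B}_x}\to X_{\mathcal{B}_{\sigma(x)}}$ as the bridge. First I would observe that shifting the indices of a path $\bar{e}\in X_{\mathcal{B}_x}$ converts it to a path in $X_{\mathcal{B}_{\sigma(x)}}$, but the level-$0$ vertex of the shifted diagram sits one level higher in the original, so $\bar{\sigma}(\bar e)^{+}$ in $\mathcal{B}^{+}_{\sigma(x)}$ corresponds to the tail of $\bar{e}^{+}$ from level $1$. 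One then checks that $\bar{\sigma}$ restricts to a homeomorphism $\mathring{X}_{\mathcal{B}_x}\to\mathring{X}_{\mathcal{B}_{\sigma(x)}}$, because the property of $\bar e^{+}$ not lying in $\Sigma_{\mathcal{B}^+_x}$ (i.e.\ the approximants eventually swallowing all of $\mathbb{R}^d$, Lemma~\ref{lem:cover}) is a tail property and hence invariant under dropping the first edge.

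Next I would define $\Phi_x$ on the dense subset $\bar{\Delta}_x(\mathring{X}_{\mathcal{B}_x})\subset\Omega_x$ by $\Phi_x:=\bar{\Delta}_{\sigma(x)}\circ\bar{\sigma}\circ\bar{\Delta}_x^{-1}$, matching the stated rule $\mathcal{T}_{\bar e}\mapsto\mathcal{T}_{\sigma(\bar e)}$, and then extend by continuity to all of $\Omega_x$ using compactness of the tiling spaces and the fact (Proposition~\ref{prop:conj}'s hypothesis, via minimality of $\mathcal{B}^+_x$ and Lemma~\ref{lem:twopart}) that the tiling space does not depend on the chosen path. The key geometric identity to verify is that for $\bar e^{+}\in\mathring{X}_{\mathcal{B}^+_x}$ the approximants satisfy $\mathcal{P}_k(\bar e^{+}) = \theta_{x_1}^{-1}\cdot\mathcal{P}_{k-1}(\sigma(\bar e)^{+})$ as tiled patches for all $k\geq 1$: blowing up by $f_{\bar e|_k}^{-1}$ versus $f_{(\sigma\bar e)|_{k-1}}^{-1}$ differs exactly by the extra factor $f_{e_1}^{-1}$, which is multiplication by $\theta_{x_1}^{-1}$ up to translation. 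Passing to the limit in $k$, this says $\mathcal{T}_{\bar e^{+}}$ and $\theta_{x_1}^{-1}\cdot\mathcal{T}_{\sigma(\bar e)^{+}}$ agree as tilings, i.e.\ $\Phi_x$ rescales supertile hierarchies down by one and simultaneously shrinks lengths by $\theta_{x_1}$; the negative part of the path records the transverse coordinate, which is carried along compatibly by the same rescaling, so $\Phi_x$ is well defined and injective, with inverse built the same way from $\sigma^{-1}$.

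The conjugacy equation is then a direct consequence of this rescaling. If $\mathcal{T}=\bar{\Delta}_x(\bar e)$ then $\varphi_t(\mathcal{T})$ corresponds to moving the transverse address by $t$ inside the (blown-up) prototile; applying $\bar{\sigma}$ and $\bar{\Delta}_{\sigma(x)}$, the same displacement is now measured in a copy of space scaled by $\theta_{x_1}$, so it becomes a translation by $\theta_{x_1}t$. Concretely, one checks $\Phi_x(\varphi_t(\mathcal{T}))$ and $\varphi_{\theta_{x_1}t}(\Phi_x(\mathcal{T}))$ have the same image under $\bar{\Delta}_{\sigma(x)}^{-1}$ on a dense set and then invokes continuity of all maps involved; this gives \eqref{eqn:conj}. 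Finally, the statement about the hierarchy: a level-$1$ supertile $t^{(1)}$ of $\mathcal{T}$ is by construction one of the patches $f_{e_1}^{-1}(A_{r(e_1)})$ appearing in $\mathcal{P}_1$, and under the identification $\mathcal{P}_k(\bar e^{+})=\theta_{x_1}^{-1}\cdot\mathcal{P}_{k-1}(\sigma(\bar e)^{+})$ this patch maps to $\theta_{x_1}\cdot t^{(1)}$, which is now an honest (level-$0$) tile of $\Phi_x(\mathcal{T})$, since a level-$1$ supertile of $x$ is a scaled-up copy of a prototile of $\sigma(x)$.

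The main obstacle I expect is bookkeeping the index shift cleanly — keeping straight that ``level $k$ of $\mathcal{B}_x$'' becomes ``level $k-1$ of $\mathcal{B}_{\sigma(x)}$'', that the distinguished point/origin convention (Remark~\ref{rem:convention}) stays consistent under this shift, and that the extension from the dense orbit-transversal set to all of $\Omega_x$ genuinely gives a homeomorphism rather than merely a continuous surjection. The latter point is handled by running the identical construction for $\sigma^{-1}(x)$ to produce a two-sided inverse, together with the minimality hypothesis guaranteeing the tiling spaces (and their canonical transversals) are independent of the representative path, so that $\Phi_x$ and $\Phi_{\sigma(x)}^{-1}$ compose to the identity.
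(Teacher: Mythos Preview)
Your approach is correct but takes a different route from the paper. The paper defines $\Phi_x^{-1}$ directly and concretely on tilings: given $\mathcal{T}\in\Omega_{\sigma(x)}$, substitute each tile using the rule $\mathcal{F}_{x_1}$ and rescale by $\theta_{x_1}^{-1}$; this is manifestly continuous and lands in $\Omega_x$. The forward map $\Phi_x$ is then described as erasing the level-$0$ tile boundaries (leaving the level-$1$ supertile decomposition) and rescaling by $\theta_{x_1}$. The conjugacy equation and the hierarchy claim fall out immediately from this substitute/desubstitute description, and no Robinson maps or Bratteli paths are invoked.

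Your route through $\bar\Delta_x$ and $\bar\sigma$ also works, and it has the virtue of making the link between $\sigma$ on $\Sigma_N$ and $\Phi_x$ explicit at the level of paths; the approximant identity $\mathcal{P}_k(\bar e^{+})=\theta_{x_1}^{-1}\cdot\mathcal{P}_{k-1}(\sigma(\bar e)^{+})$ is exactly the right computation. The point to watch is that $\bar\Delta_x$ is only stated to be a continuous surjection (Propositions~\ref{prop:Robinson1} and~\ref{prop:Robinson2}), not a bijection, so writing $\bar\Delta_x^{-1}$ requires the well-definedness check that $\bar\Delta_x(\bar e)=\bar\Delta_x(\bar e')$ implies $\bar\Delta_{\sigma(x)}(\bar\sigma(\bar e))=\bar\Delta_{\sigma(x)}(\bar\sigma(\bar e'))$. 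This holds (the non-injectivity comes from boundary ambiguities in the negative-side address map $p_x$, and these are respected by the index shift), but it is the step your proposal passes over most quickly. The paper's description of $\Phi_x^{-1}$ as substitute-and-inflate sidesteps this entirely by never leaving the tiling picture, at the cost of making the connection to $\bar\sigma$ implicit.
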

\begin{proof}
  Let us first describe the inverse $\Phi_x^{-1}$. Take a tiling $\mathcal{T}\in \Omega_{\sigma(x)}$. The image $\Phi_x^{-1}(\mathcal{T})$ is the tiling $\mathcal{T}'\in\Omega_x$ obtained from $\mathcal{T}$ by substituting each tile in $\mathcal{T}$ using the substitution rule $\mathcal{F}_{(\sigma(x))_{-1}} = \mathcal{F}_{x_{1}}$ and rescaling by $\theta^{-1}_{(\sigma(x))_{-1}} = \theta^{-1}_{x_1}$. Thus the map $\Phi_x^{-1}$ adds the smallest level of hierarchical structure and can easily be seen to be continuous.

  As such, the map $\Phi_x$ should remove the smallest level of the hierarchical structure. So if $\mathcal{T}\in\Omega_x$ then $\Phi_x(\mathcal{T})$ is the tiling in $\Omega_x$ obtained by first erasing all level-0 supertiles in $\mathcal{T}$, leaving a tiling of $\mathbb{R}^d$ where the tiles are the level-1 supertiles of $\mathcal{T}$. Rescaling this tiling by $\theta_{x_1}$ gives us the tiling $\Phi_x(\mathcal{T})$. This is also easily seen to be continuous, and the shifting of hierarchical structures follows from this.

  From these operations one can see that the conjugacy equation (\ref{eqn:conj}) holds; we leave the details to the reader.
\end{proof}

\section{Cohomology}
\label{sec:AP}
Let $\mathcal{T}$ be a tiling of $\mathbb{R}^d$. For $R > 0$, a function $f:\mathbb{R}^d\rightarrow \mathbb{R}$ is called \textbf{$\mathcal{T}$-equivariant of range $R$} if
$$\varphi_x(\mathcal{T})\cap B_R(0) = \varphi_y(\mathcal{T})\cap B_R(0) \Rightarrow f(x) = f(y).$$
We say $f$ is \textbf{$\mathcal{T}$-equivariant} if it is $\mathcal{T}$-equivariant of range $R$ for some $R > 0$. The set of all $\mathcal{T}$-equivariant, $C^\infty$ functions is denoted by $\Delta_{\mathcal{T}}^0$. A $k$-form $\eta$ is $\mathcal{T}$-equivariant if each function involved in $\eta$ is $\mathcal{T}$-equivariant, and we denote the set of all $\mathcal{T}$-equivariant, smooth $k$-forms by $\Delta_{\mathcal{T}}^k$.

The complex $\{\Delta_{\mathcal{T}}^k,d\}$ is a subcomplex of the de Rham complex of smooth differential forms. As such, the restriction of $d$ to $\mathcal{T}$-equivariant forms satisfies $d^2 = 0$. We can define its cohomology by
\begin{equation}
  \label{eqn:PEC}
  H^k(\Omega_{\mathcal{T}};\mathbb{R}) = \frac{\mathrm{Ker}\{d:\Delta_{\mathcal{T}}^k\rightarrow\Delta^{k+1}_\mathcal{T}\}}{\mathrm{Im}\{d:\Delta_{\mathcal{T}}^{k-1}\rightarrow\Delta^{k}_\mathcal{T}\}},
  \end{equation}
which is the \textbf{$\mathcal{T}$-equivariant cohomology} of $\Omega_\mathcal{T}$.

\subsection{The Anderson-Putnam Complex}
\label{subsec:AP}
Let $\Omega$ be a tiling space. For any tile $t$ in the tiling $\mathcal{T}\in\Omega$, the set $\mathcal{T}(t)$ denotes all tiles in $\mathcal{T}$ which intersect $t$. This type of patch is called a \textbf{collared tile}.
\begin{definition}
  \label{def:AP}
  Let $\Omega$ be a tiling space. Consider the space $\Omega\times \mathbb{R}^d$ under the product topology, where $\Omega$ carries the discrete topology and $\mathbb{R}^d$ the usual topology. Let $\sim_1$ be the equivalence relation on $\Omega\times\mathbb{R}^d$ which declares a pair $(\mathcal{T}_1,u_1)\sim_1 (\mathcal{T}_2,u_2)$ if $\mathcal{T}_1(t_1)-u_1 = \mathcal{T}_2(t_2)-u_2$ for some tiles $t_1,t_2$ with $u_1\in t_1\in\mathcal{T}_1$ and $u_2\in t_2\in\mathcal{T}_2$. The space $(\Omega\times\mathbb{R}^d)/\sim_1$ is called the \textbf{Anderson-Putnam (AP) complex} of $\Omega$ and is denoted by $AP(\Omega)$.
\end{definition}
Let us now review the AP-complexes involved in our construction. First, note that for $x  = (x_1,x_2,\dots)\in \Sigma_N$, assuming that $B_x(\mathcal{F})$ is minimal, $AP(\Omega_x(\mathcal{F}))$ only depends on finitely many symbols $x_1,\dots, x_\ell$, since the collaring of tiles in tilings of $\Omega_x$ only depends in the $k^{th}$-approximants $\mathcal{P}_k$ for sufficiently large $k$. Thus for a type H family $\mathcal{F}$, there exists a partition $U_1,\dots, U_q$ of $\Sigma_N$ by open sets and CW-complexes $\Gamma_1,\dots, \Gamma_q$ such that if $x\in U_i$ and $B_x$ is minimal, then $AP(\Omega_x(\mathcal{F})) = \Gamma_i$.

It will be useful to also consider higher level AP complexes, defined as follows. For $x\in \Sigma_N$, $\mathcal{T}\in\Omega_x$, and denoting by $t^{(1)}$ a level 1 supertile (which are approximants of the form $\mathcal{P}_1(\bar{e})$ as in (\ref{eqn:approx})), let $\mathcal{T}(t^{(1)})$ be the union of the supertile $t^{(1)}$ and the level 1 supertiles of $\mathcal{T}$ which intersect $t^{(1)}$. Proceeding similarly, we let $\mathcal{T}(t^{(n)})$ denote the \textbf{collared level-$n$ supertile} corresponding to the level-$n$ tupertile $t^{(n)}$. Let $\sim_n$ be the equivalence relation defined by the collared level-$n$ supertiles $\mathcal{T}(t^{(n)})$ in $\Omega$ as in Definition \ref{def:AP}. The quotient $(\Omega\times\mathbb{R}^d)/\sim_n$ is denoted by $AP^n(\Omega)$. By construction, $AP(\Omega_{\sigma^n(x)})$ and $AP^n(\Omega_x)$ are homeomorphic. In fact, the only difference is their scale: $AP^n(\Omega_x)$ is a rescaling of $AP(\Omega_{\sigma^n(x)})$ by $\theta_{x_n}^{-1}\cdots \theta_{x_1}^{-1}$. Denote by
\begin{equation}
  \label{eqn:rescale}
  r_{k,x}:AP^k(\Omega_x)\rightarrow AP(\Omega_{\sigma^k(x)})
\end{equation}
the rescaling homeomorphisms.
\begin{proposition}
  The substitution rule $\mathcal{F}_{x_i}$ induces a continuous map $\gamma_i:AP(\Omega_{\sigma^i(x)}(\mathcal{F}))\rightarrow AP(\Omega_{\sigma^{i-1}(x)}(\mathcal{F}))$ defined by $\gamma_i(\mathcal{T},u) = (\mathcal{F}_{x_i}(\mathcal{T}),\theta_{x_i}^{-1}u)$ for all $i>0$.
\end{proposition}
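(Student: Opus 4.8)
The plan is to check that the formula $\gamma_i(\mathcal{T},u) = (\mathcal{F}_{x_i}(\mathcal{T}),\theta_{x_i}^{-1}u)$ descends to a well-defined continuous map on the quotient spaces. First I would unwind what $\mathcal{F}_{x_i}(\mathcal{T})$ means: for $\mathcal{T}\in\Omega_{\sigma^i(x)}$, applying the substitution rule $\mathcal{F}_{x_i}$ replaces each tile of $\mathcal{T}$ by a patch of smaller tiles (copies of the scaled prototiles $\theta_{x_i}A_j$), producing a tiling whose tiles, after rescaling by $\theta_{x_i}^{-1}$, are honest prototiles; this rescaled tiling lies in $\Omega_{\sigma^{i-1}(x)}$ because the $(i-1)$-st level transition matrix of $\mathcal{B}_{\sigma^{i-1}(x)}$ is exactly $\mathcal{M}(\mathcal{F}_{x_i})$. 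So the assignment $\mathcal{T}\mapsto \mathcal{F}_{x_i}(\mathcal{T})$ followed by rescaling by $\theta_{x_i}^{-1}$ maps $\Omega_{\sigma^i(x)}$ to $\Omega_{\sigma^{i-1}(x)}$ continuously — this is essentially the inverse-substitution/rescaling map and is closely related to $\Phi_{\sigma^{i-1}(x)}^{-1}$ from Proposition \ref{prop:conj}; indeed, I would remark that $\gamma_i$ is just the map on AP-complexes induced by that homeomorphism on tiling spaces, so continuity of the tiling-space map is already in hand.

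The real content is that $\gamma_i$ respects the equivalence relation $\sim_1$ defining the AP-complexes. Suppose $(\mathcal{T}_1,u_1)\sim_1(\mathcal{T}_2,u_2)$ in $AP(\Omega_{\sigma^i(x)})$, so $\mathcal{T}_1(t_1)-u_1 = \mathcal{T}_2(t_2)-u_2$ for tiles $u_1\in t_1\in\mathcal{T}_1$, $u_2\in t_2\in\mathcal{T}_2$. I must show $\mathcal{F}_{x_i}(\mathcal{T}_1)(s_1) - \theta_{x_i}^{-1}u_1 = \mathcal{F}_{x_i}(\mathcal{T}_2)(s_2) - \theta_{x_i}^{-1}u_2$ for the appropriate tiles $s_j$ containing $\theta_{x_i}^{-1}u_j$ in $\mathcal{F}_{x_i}(\mathcal{T}_j)$. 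The key geometric observation is \emph{locality of substitution}: the collared tile $\mathcal{T}_j(t_j)$ determines the substitution of $t_j$ together with the parts of the substitutions of neighboring tiles that touch the subtiles of $t_j$ — more precisely, the collared subtile of any tile inside $\mathcal{F}_{x_i}(t_j)$ is determined by $\mathcal{T}_j(t_j)$ (this uses the compatibility/type H hypothesis: substitution is a local rule, so $\mathcal{T}_1(t_1) = \mathcal{T}_2(t_2)$ up to the translation $u_1-u_2$ forces the corresponding collared subtiles around $u_j$ to agree up to the rescaled translation $\theta_{x_i}^{-1}(u_1-u_2)$). Rescaling by $\theta_{x_i}^{-1}$ turns the collared subtiles back into collared prototiles, which is exactly what is needed for the $\sim_1$ relation in $AP(\Omega_{\sigma^{i-1}(x)})$. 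Hence $\gamma_i$ is well-defined on the quotient.

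For continuity on the quotient, I would note that $\Omega$ carries the discrete topology in Definition \ref{def:AP}, so $\Omega\times\mathbb{R}^d\to\Omega\times\mathbb{R}^d$ given by $(\mathcal{T},u)\mapsto(\mathcal{F}_{x_i}(\mathcal{T}),\theta_{x_i}^{-1}u)$ is continuous (it is a bijection on the discrete factor and an affine map on the Euclidean factor), and a map out of a quotient is continuous iff its composition with the quotient projection is; since $\gamma_i$ is well-defined, it is the factorization of this continuous map through the projection, hence continuous. The main obstacle is the well-definedness step: one has to be careful that the collaring radius matches up correctly — a collared tile of $\mathcal{T}_j$ at level $0$ may not literally determine a \emph{collared} subtile (one collared outward by a full tile-layer) of $\mathcal{F}_{x_i}(\mathcal{T}_j)$, only a subtile together with a slightly smaller neighborhood. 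The fix is standard in Anderson–Putnam theory: either pass to higher collaring (this is why the $AP^n$ complexes and the rescaling maps $r_{k,x}$ were set up) or observe that, because the family is type H and the substitutions have finite local complexity (Lemma \ref{lem:FLC}), finitely many recollarings suffice and the induced map is insensitive to this up to the identifications in the AP-complex. I would phrase the argument so that $\gamma_i$ is first defined at the level of the appropriate $AP^n$ complexes and then transported via the homeomorphisms $r_{k,x}$ of \eqref{eqn:rescale} back to $AP(\Omega_{\sigma^i(x)})\to AP(\Omega_{\sigma^{i-1}(x)})$.
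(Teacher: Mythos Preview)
The paper does not actually prove this proposition: its entire proof is the citation ``\cite[Proposition 4.2]{AP}''. Your outline is essentially the standard Anderson--Putnam argument that this citation points to, so in that sense you are reproducing what the paper defers to.

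One comment on your self-identified ``main obstacle'': the collaring concern you raise is not in fact an obstruction, and no passage to $AP^n$ or use of $r_{k,x}$ is needed. If $u\in t\in\mathcal{T}$ and you know the collared tile $\mathcal{T}(t)$, then after applying $\mathcal{F}_{x_i}$ and rescaling, the subtile $s$ containing $\theta_{x_i}^{-1}u$ lies inside the (rescaled) substituted copy of $t$, and every tile touching $s$ is a subtile either of $t$ itself or of one of the tiles in $\mathcal{T}(t)$ adjacent to $t$. Hence the full collared subtile $\mathcal{F}_{x_i}(\mathcal{T})(s)$ is already determined by $\mathcal{T}(t)$; this is precisely why Anderson and Putnam introduced collaring. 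So your well-definedness step goes through directly, and the fix you propose at the end is unnecessary.
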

\begin{proof}
  \cite[Proposition 4.2]{AP}
\end{proof}
We now want to relate the tiling spaces to inverse limit constructions as first done by Anderson and Putnam in \cite{AP}.
\begin{theorem}
  \label{thm:inverseLimit}
Let $x\in\Sigma_N$ be such that $X_{B_x(\mathcal{F})}$ is minimal. Then
\begin{equation}
  \label{eqn:invLim}
  \Omega_{x} = \varprojlim_{\gamma_i} AP(\Omega_{\sigma^i(x)}(\mathcal{F})).
  \end{equation}
\end{theorem}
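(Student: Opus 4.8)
The plan is to adapt the Anderson--Putnam argument from \cite{AP} to this non-stationary, random setting, using the hierarchical structure recorded by the Bratteli diagram $B_x(\mathcal{F})$ in place of the iterates of a single substitution. First I would construct the map $\Omega_x \to \varprojlim_{\gamma_i} AP(\Omega_{\sigma^i(x)}(\mathcal{F}))$. Given $\mathcal{T}\in\Omega_x$, the origin lies in a unique level-$0$ tile, a unique level-$1$ supertile, and so on, so for each $i\geq 0$ the translate of $\mathcal{T}$ determines a point $(\mathcal{T}, u_i) \in AP^i(\Omega_x)$ by recording the collared level-$i$ supertile containing the origin together with the position of the origin in it; composing with the rescaling homeomorphism $r_{i,x}$ of (\ref{eqn:rescale}) produces a point in $AP(\Omega_{\sigma^i(x)}(\mathcal{F}))$. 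One checks that these points are compatible under the bonding maps $\gamma_i$ — this is precisely the statement that erasing the lowest level of hierarchy and rescaling by $\theta_{x_i}^{-1}$ (the content of $\gamma_i$ and of Proposition \ref{prop:conj}) intertwines the level-$i$ and level-$(i-1)$ data — so we obtain a well-defined continuous map into the inverse limit.

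Next I would show this map is a bijection. Injectivity: two tilings $\mathcal{T},\mathcal{T}'\in\Omega_x$ mapping to the same point of the inverse limit agree, for every $i$, on the collared level-$i$ supertile around the origin, hence (by finite local complexity, Lemma \ref{lem:FLC}, and repetitivity) on larger and larger patches around the origin as $i\to\infty$, since the level-$i$ supertiles grow without bound by the blowup expansion $\theta_{x_1}^{-1}\cdots\theta_{x_i}^{-1}$; translating back, they agree everywhere, so $\mathcal{T}=\mathcal{T}'$. Surjectivity: a point of the inverse limit is a compatible sequence of collared level-$i$ supertiles with marked origin positions; compatibility under the $\gamma_i$ means these patches are nested and consistent, and because the family is type H (in particular compatible, so the supertiles fit together along $(d-1)$-cells) and $\theta_{x_1}^{-1}\cdots\theta_{x_i}^{-1}\to\infty$, their union is a well-defined tiling of $\mathbb{R}^d$ lying in $\Omega_x$ — indeed it is built exactly as a tiling $\mathcal{T}_{\bar e}$ from an infinite path $\bar e$ in $X_{B_x(\mathcal{F})}$ via the Robinson map of Proposition \ref{prop:Robinson1}, suitably translated — mapping to the given point. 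Finally, since $\Omega_x$ is compact and the inverse limit is Hausdorff, the continuous bijection is a homeomorphism.

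The one genuinely new point compared to the classical case is that the bonding maps $\gamma_i$ vary with $i$ (they depend on $x_i$), so one cannot appeal to a single substitution being a homeomorphism on its inverse limit; instead the argument must be phrased uniformly in $i$. The key technical input making this go through is that $\mathcal{F}$ is \emph{uniformly} affine scaling on each level, so that all level-$i$ supertiles in a given tiling have the same expansion factor $\theta_{x_1}^{-1}\cdots\theta_{x_i}^{-1}$, guaranteeing both that collared $i$-supertiles exhaust $\mathbb{R}^d$ (as in Lemma \ref{lem:cover}) and that the collaring at level $i$ stabilizes the local picture; the finiteness of the CW-complexes $\Gamma_1,\dots,\Gamma_q$ up to rescaling, noted just before the theorem, then keeps everything within finitely many homeomorphism types.

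The main obstacle I anticipate is the careful bookkeeping of \emph{collaring}: one must verify that the collared level-$i$ supertile around the origin really does determine, and is determined by, the point of $AP^i(\Omega_x)$, and that the $\gamma_i$ are genuinely the bonding maps for which the inverse limit recovers $\Omega_x$ rather than some quotient — this is exactly where Anderson and Putnam needed collared (rather than bare) tiles, and the same subtlety reappears here at every level. Handling this amounts to checking that for $\mathcal{T}\in\Omega_x$ the sequence of collared supertile patches around every point of $\mathbb{R}^d$ (not just the origin) is recovered, i.e. that the forcing-the-border phenomenon holds at each level; the compatibility hypothesis in Definition \ref{def:typeH}, which controls how supertiles meet, is what supplies this.
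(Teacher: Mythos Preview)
Your sketch is essentially the Anderson--Putnam argument carried out in the non-stationary setting, and it is correct in outline. The paper itself does not give a proof at all: it simply cites \cite[Proposition 4.3]{AP} and leaves the adaptation to the reader. So you have in fact done more than the paper does, by writing out how the classical argument transfers.

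A couple of small points worth tightening. First, your claim that ``the origin lies in a unique level-$i$ supertile'' is not literally true when the origin sits on a supertile boundary; this is harmless because the equivalence relation $\sim_i$ defining $AP^i(\Omega_x)$ identifies the resulting points, but you should say so. Second, in the surjectivity step your appeal to the Robinson map and paths $\bar e\in X_{B_x(\mathcal{F})}$ is a slight detour: a point of the inverse limit need not correspond to a tiling in $\mho_x$, and the relevant path could lie in $\Sigma_{B_x}$. The cleaner argument is simply that the collared level-$i$ supertile around the origin has diameter at least that of a level-$i$ supertile, hence at least $d_-\,\theta_{(i)_x}^{-1}\to\infty$, so the nested patches exhaust $\mathbb{R}^d$ regardless; minimality of $B_x(\mathcal{F})$ then guarantees every local pattern already occurs in $\Omega_x$, so the limit tiling lies there. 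With those adjustments your argument is the intended one.
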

\begin{proof}
  \cite[Proposition 4.3]{AP}
\end{proof}
\begin{theorem}
  \label{thm:APcohomology}
  Let $x\in\Sigma_N$ be such that $X_{B_x(\mathcal{F})}$ is minimal. The \v Cech cohomology groups of $\Omega_{x}(\mathcal{F})$ are the direct limits
  \begin{equation}
    \label{eqn:directLimit}
    \check{H}^i(\Omega_x(\mathcal{F});\mathbb{R}) = \varinjlim_{\gamma_k^*}\check{H}^i(AP(\Omega_{\sigma^k(x)}(\mathcal{F}));\mathbb{R}).
    \end{equation}
\end{theorem}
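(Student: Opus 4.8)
The plan is to deduce the statement directly from the inverse limit description \eqref{eqn:invLim} in Theorem \ref{thm:inverseLimit} by applying the continuity of \v Cech cohomology with respect to inverse limits of compact spaces. First I would recall the standard fact that if a compact Hausdorff space is written as an inverse limit $X = \varprojlim(X_k, g_k)$ of compact Hausdorff (here CW) spaces along bonding maps $g_k\colon X_{k+1}\to X_k$, then \v Cech cohomology takes inverse limits to direct limits: $\check H^i(X;\mathbb{R}) = \varinjlim(\check H^i(X_k;\mathbb{R}), g_k^*)$. This is a consequence of the fact that \v Cech cohomology is a continuous functor on the category of compact Hausdorff spaces (every open cover of the inverse limit is refined by the pullback of an open cover of some $X_k$, because the cylinder-type sets coming from the factors form a basis), together with the fact that the direct limit functor on abelian groups is exact and commutes with cohomology of cochain complexes. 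In our situation the spaces $AP(\Omega_{\sigma^k(x)}(\mathcal{F}))$ are compact CW-complexes (finite complexes, by finite local complexity from Lemma \ref{lem:FLC} and the discussion following Definition \ref{def:AP}), and the bonding maps are the continuous maps $\gamma_k$ of the preceding Proposition, so the hypotheses are met.

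Next I would simply combine this with Theorem \ref{thm:inverseLimit}: since $\Omega_x(\mathcal{F}) = \varprojlim_{\gamma_i} AP(\Omega_{\sigma^i(x)}(\mathcal{F}))$ whenever $X_{B_x(\mathcal{F})}$ is minimal, the continuity property yields
\[
\check H^i(\Omega_x(\mathcal{F});\mathbb{R}) = \varinjlim_{\gamma_k^*}\check H^i(AP(\Omega_{\sigma^k(x)}(\mathcal{F}));\mathbb{R}),
\]
which is exactly \eqref{eqn:directLimit}. One small point worth spelling out is that the inverse system in \eqref{eqn:invLim} is indexed by $i$ with bonding maps $\gamma_i\colon AP(\Omega_{\sigma^i(x)})\to AP(\Omega_{\sigma^{i-1}(x)})$ decreasing the index, so on cohomology the induced maps $\gamma_i^*$ increase the index and the directed system is $AP(\Omega_x)\xrightarrow{\gamma_1^*} AP(\Omega_{\sigma(x)})\xrightarrow{\gamma_2^*}\cdots$; I would make sure the direction of the arrows and the role of the basepoint level $k=0$ are stated unambiguously so the reader can match it with \eqref{eqn:directLimit}.

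The main obstacle, such as it is, is purely bookkeeping: one must be careful that the $AP$-complexes appearing here are genuinely finite CW-complexes (so that \v Cech cohomology agrees with singular/simplicial cohomology and the continuity statement applies without set-theoretic subtleties), and that minimality of $X_{B_x(\mathcal{F})}$ is exactly the hypothesis under which Theorem \ref{thm:inverseLimit} furnishes the inverse limit presentation. Beyond that, the proof is a citation of the classical continuity of \v Cech cohomology; since the excerpt already attributes the corresponding statement in the single-substitution case to \cite[Proposition 4.3]{AP} and the surrounding results are proved there by the same method, I would simply invoke that reference (and the standard topology texts on \v Cech theory) rather than reproving the continuity property from scratch.
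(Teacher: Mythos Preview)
Your proposal is correct and matches the paper's approach: the paper's proof consists solely of the citation \cite[Theorem 6.1]{AP}, and your argument spells out exactly the content behind that citation (continuity of \v Cech cohomology under inverse limits of compact Hausdorff spaces, applied to the inverse-limit presentation from Theorem~\ref{thm:inverseLimit}). Your final paragraph even anticipates this, so there is nothing to add.
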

\begin{proof}
\cite[Theorem 6.1]{AP}
\end{proof}

\begin{proposition}
  \label{prop:ES}
  Let $V_1,\dots, V_n$ be finite dimensional vector spaces and $\gamma^*_{i,j}:V_j\rightarrow V_i$ be linear maps. Then
    $$W_x^+:= \varinjlim_{k\geq 0} (V_{x_k},\gamma^*_{x_k,x_{k+1}})$$
is finite dimensional. Moreover, for any $\sigma$-invariant ergodic probability measure $\mu$ on $(\Sigma_{n},\sigma)$ we have that for $\mu$-almost every $x\in \Sigma_n$, there exists a subspace $ES_x \subset V_{x_{0}}$ such that the map $(\gamma_{x_{0}}^{*})^{\infty} \colon V_{x_{0}} \to W_{x}^{+}$ (given by the definition of the direct limit) takes $ES_{x}$ onto $W_{x}^{+}$.
\end{proposition}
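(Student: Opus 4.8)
The plan is to prove the two assertions of Proposition \ref{prop:ES} separately: first the finite-dimensionality of the direct limit $W_x^+$, which is essentially formal, and then the existence of the eventually-stabilizing subspace $ES_x \subset V_{x_0}$, which is where the ergodic input enters.

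\textbf{Finite dimensionality.} First I would observe that a direct limit of finite-dimensional vector spaces along linear maps is always the union of the images of the $V_{x_k}$ under the canonical maps $(\gamma^*)^\infty_k \colon V_{x_k} \to W_x^+$, and each such image is a quotient of $V_{x_k}$, hence has dimension at most $\max_i \dim V_i =: D$. Since the images are nested (the image of $V_{x_k}$ sits inside the image of $V_{x_{k+1}}$ because the canonical maps are compatible with the $\gamma^*_{x_k,x_{k+1}}$), their dimensions form a nondecreasing sequence of integers bounded by $D$, so they stabilize, and $W_x^+$ equals one of them; in particular $\dim W_x^+ \le D < \infty$. This argument is purely algebraic and uses nothing about $\mu$.

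\textbf{The eventually-stabilizing subspace.} For the second part, for each $k \ge 0$ let $\Phi_k \colon V_{x_0} \to V_{x_k}$ denote the composition $\gamma^*_{x_0,x_1}\circ\cdots\circ\gamma^*_{x_{k-1},x_k}$ applied the appropriate way (reading the direct-limit composition correctly), and set $R_k(x) := \Phi_k(V_{x_0}) \subseteq V_{x_k}$ — the image of the bottom space pushed up to level $k$ — which is a subspace of dimension $d_k(x) := \dim R_k(x)$. The sequence $d_k(x)$ is nonincreasing in $k$ (images can only shrink or stay equal under further linear maps), and it is bounded below by $\dim W_x^+$ because the canonical map $V_{x_0}\to W_x^+$ factors through every $R_k(x)$ and surjects onto $W_x^+$ once $R_k$ stabilizes. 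Hence $d_k(x)$ stabilizes at some finite value, say at $d_\infty(x)$, and $(\gamma^*_{x_0})^\infty$ maps $V_{x_0}$ onto $W_x^+$ with kernel exactly $\bigcup_k \ker(\Phi_k)$. Now define $ES_x$ to be a complement in $V_{x_0}$ of this eventual kernel; equivalently, $ES_x$ can be taken to be the preimage, under $\Phi_{k_0}$ for $k_0$ large enough that $d_k$ has stabilized, of a fixed complement of $\ker$ inside $R_{k_0}(x)$. Then $\Phi_{k_0}$ restricted to $ES_x$ is an isomorphism onto the stabilized image, and composing with the (now injective) canonical map $R_{k_0}(x)\to W_x^+$ shows $(\gamma^*_{x_0})^\infty$ carries $ES_x$ bijectively, in particular onto, $W_x^+$.

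\textbf{Where the ergodicity is used, and the main obstacle.} The statement only asserts existence for $\mu$-a.e.\ $x$, and indeed the construction above works for \emph{every} $x$, so strictly speaking the measure is not logically needed for the bare statement — but I expect the point of phrasing it this way is that one wants $ES_x$ (and $\dim ES_x = \dim W_x^+$) to be a measurable function of $x$ and, via ergodicity and shift-invariance of the quantity $\dim W_x^+$ (it is a $\sigma$-invariant function, since $W_{\sigma(x)}^+$ is the same direct limit shifted by one, hence isomorphic, so $\dim W_x^+$ is $\sigma$-invariant and therefore $\mu$-a.e.\ constant), to arrange that $ES_x$ depends measurably on $x$ and that the stabilization index $k_0$ can be chosen on a set of full measure. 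So the real work is: (1) checking measurability of $x \mapsto ES_x$ — I would choose $ES_x$ via a measurable selection, e.g.\ using the orthogonal complement of $\bigcup_k\ker\Phi_k(x)$ with respect to a fixed inner product on $V_{x_0}$, which makes it canonical and manifestly measurable; and (2) the $\sigma$-invariance argument pinning $\dim W_x^+$ to a constant a.e., which needs the identification of $W_{\sigma(x)}^+$ with $W_x^+$ coming from the fact that a cofinal subsystem of a direct system has the same limit. The main obstacle is not any single hard estimate but getting the bookkeeping of the direct-limit maps and their images exactly right (which index goes where, and that "image of the bottom space" is the correct object to track), together with the measurable-selection step for $ES_x$; neither is deep but both must be done carefully for the later use in constructing the cohomology bundle and the expanding subspace $E_x^+$.
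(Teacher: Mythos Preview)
Your argument for finite-dimensionality is fine and matches the paper's. The gap is in the second part: you assert that the canonical map $(\gamma^*_{x_0})^\infty\colon V_{x_0}\to W_x^+$ is surjective for \emph{every} $x$ (``the construction above works for every $x$, so strictly speaking the measure is not logically needed''), but this is false. The only thing that stabilization of the ranks $d_k(x)=\dim\Phi_k(V_{x_0})$ gives you is that the image of $V_{x_0}$ in $W_x^+$ has dimension $d_\infty(x)$; it does \emph{not} bound $d_k(x)$ below by $\dim W_x^+$, because an element of $W_x^+$ may be represented by some $v\in V_{x_k}$ that is neither in $\Phi_k(V_{x_0})$ nor eventually identified with anything in that image. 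Concretely: take $V_1=V_2=\mathbb{R}^2$, let $\gamma^*_{2,1}$ be projection to the first coordinate and $\gamma^*_{2,2}=\mathrm{Id}$, and set $x=(1,2,2,2,\dots)$. Then $W_x^+\cong\mathbb{R}^2$ while the image of $V_{x_0}=V_1$ is one-dimensional, so no choice of $ES_x\subset V_{x_0}$ can surject onto $W_x^+$.

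This is exactly where the ergodic hypothesis does real work, not merely bookkeeping. The paper's mechanism is the following: let $R_\mu$ be the minimal rank of $\gamma^*_w$ over all finite words $w$ whose cylinder has positive $\mu$-measure. Poincar\'e recurrence forces, for $\mu$-a.e.\ $x$, every finite subword of $x$ to lie in this positive-measure set, so in particular a rank-$R_\mu$ word occurs starting at position $0$ and also past any given level $\ell$. Given a representative $c_\ell\in V_{x_\ell}$ of a class in $W_x^+$, choose $\ell'>\ell$ so that both $x_0\cdots x_{\ell'}$ and $x_\ell\cdots x_{\ell'}$ contain a rank-$R_\mu$ subword; then $\operatorname{rank}\gamma^*_{\ell',0}=\operatorname{rank}\gamma^*_{\ell',\ell}=R_\mu$, and since $\operatorname{Im}\gamma^*_{\ell',0}\subset\operatorname{Im}\gamma^*_{\ell',\ell}$ (the former factors through the latter) the two images coincide. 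Hence $\gamma^*_{\ell',\ell}c_\ell$ lies in $\operatorname{Im}\gamma^*_{\ell',0}$, producing a preimage $c_0\in V_{x_0}$ with $[c_0]=[c_\ell]$ in $W_x^+$. That equality of images is the missing idea in your argument; without it, surjectivity of $V_{x_0}\to W_x^+$ simply need not hold.
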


\begin{proof}
  First we show $W_{x}^{+}$ is finite dimensional. Let $N = \max \mathrm{dim}\,V_{i}$. We claim that $\dim W_x^+ \le N$. To see this, suppose we have $k > N$ vectors $[v_{i},k(i)] \in W_x^+$, and let $K = \underset{i}\max\, k(i)$. Then there exists vectors $v_{i}^{\prime} \in V_{K}$ such that $[v_{i},k(i)] = [v_{i}^{\prime},K]$ in $W_x^+$. Since dim$V_{K} \le N$, the set $\{v_{i}^{\prime}\}$ is linearly dependent, and hence so are the $[v_{i},k(i)]$ in $W_x^+$.
  
 To prove the second part, let $W$ be the set of finite words from an alphabet of $n$ symbols.  For any word $w = w_0w_1 \in W$ of length 2, let $\gamma^*_w= \gamma_{w_1,w_0}^*:V_{w_0}\rightarrow V_{w_1}$. Now for any word $w = w_0w_1\cdots w_k\in W$ of length $k+1\geq 3$, define $\gamma^*_w = \gamma_{w_kw_{k-1}}^*\cdots \gamma_{w_2w_{1}}^*\gamma_{w_1w_{0}}^*:V_{w_0}\rightarrow V_{w_k}$. For every finite word $w$ denote the cylinder set
  $$C_w = \{x\in\Sigma_n: x_0\cdots x_{|w|-1} = w \}$$
and define
$$W_\mu := \{w\in W: \mu(C_{w})>0\},\hspace{.25in} R_\mu := \min_{w\in W_\mu}\mbox{rank}\, \gamma^*_w \hspace{.25in}\mbox{ and } \hspace{.25in}L_\mu := \min_{\substack{w\in W_\mu: \\ \mathrm{rank}\, \gamma_w^* = R_\mu}} |w|.$$
It follows from Poincar\'e recurrence that if
$$A_\mu' = \{x\in \Sigma_n: \mbox{ any word $w$ found in $x$ is an element of $W_\mu$}\}$$
  and $A_\mu = A'_\mu \cap \mathrm{supp}\, \mu$ then $\mu(A_\mu) = 1$. For $x = (\dots, x_{-1},x_{0},x_1,x_2,\dots)\in \Sigma_n$ we denote the associated one-sided infinite string by $x^+ = (x_0,x_1,x_2,\dots)$. Then for all $x\in A_\mu$ we can write it (non-uniquely) as the concatenation of finite words $x^+ = a_1^xb_1^xa_2^xb_2^xa_3^xb_3^x\cdots$ where $a_i^x$ is some finite, possibly empty word, and $b_i^x$ satisfies $\mathrm{rank}\, \gamma_{b_i^x}^* = R_\mu$ and $|b_i^x| = L_\mu$ for all $i\geq 0$.

  Now take $x\in A_\mu$ and let $c\in W^+_x$. Recall that this is an equivalence class: two elements $c_i\in V_{x_i}$ and $c_j\in V_{x_j}$ are equivalent if there exists a $k\geq \max\{i,j\}$ such that $\gamma_{x_i\cdots x_k}^*c_i = \gamma_{x_j\cdots x_k}^*c_j\in V_{x_k} $. So pick a representative $c_\ell\in V_{x_\ell}$ of $c\in W_x^+$. Then fixing a decomposition $x^+ = a_1^xb_1^xa_2^xb_2^xa_3^xb_3^x\cdots$ we have that $x_\ell$ is in either a $a_i^x$ or $b_i^x$ for some $i$. Suppose it is in some $a_i^x$ (the other case is similarly treated). Then there is a $\ell'>\ell$ with $\ell'-\ell\leq L_\mu + |a_i^x|$ such that $\mathrm{rank}\, \gamma_{\ell',\ell}^* = R_\mu$. Similarly, we have that $\mathrm{rank}\, \gamma_{\ell',0}^* = R_\mu$, since $R_{\mu}$ was defined to be the minimal such rank which appears. Thus $\gamma_{\ell',\ell}^*c_\ell \in \mathrm{Im}\, (\gamma_{\ell',\ell}^*) = \mathrm{Im}\, (\gamma_{\ell',0}^*)$, so there exists a $c_0\in V_{x_0}$ such that $\gamma_{\ell',\ell}^*c_\ell = \gamma_{\ell',0}^*c_0$, and so $c$ has a representative in $V_{x_0}$. Since any $c\in W_x^+$ has a representative in $V_{x_0}$ and this forms a vector space, the space of representatives in $V_{x_0}$ of $W_x^+$ is denoted by $ES_x$.
\end{proof}

\begin{definition}
  A continuous function $f:\Omega_\mathcal{T}\rightarrow \mathbb{R}$ is called \textbf{transversally locally constant} if for any $\mathcal{T}'\in\Omega_\mathcal{T}$ there exists a $R>0$ such that
  $$B_R(0)\cap \mathcal{T}' = B_R(0)\cap \mathcal{T}''\, \Longrightarrow f(\mathcal{T}') = f(\mathcal{T}'').$$
\end{definition}
We denote by $C^\infty_{tlc}(\Omega_\mathcal{T})$ the set of all transversally locally functions on $\Omega_\mathcal{T}$ which are $C^\infty$ along the leaves of the foliation of $\Omega_\mathcal{T}$. The map $i_\mathcal{T}:C^{\infty}_{tlc}(\Omega_\mathcal{T})\longrightarrow \Delta^0_{\mathcal{T}}$ defined by
\begin{equation}
  \label{eqn:algIso}
i_\mathcal{T}(f)(t) = f\circ \varphi_t(\mathcal{T}).
\end{equation}
extends to an isomorphism $i_\mathcal{T}:C^\infty_{tlc}(\Omega_\mathcal{T},\wedge^k\mathbb{R}^d)\rightarrow\Delta_\mathcal{T}^k$. As such, we can define the Hodge map $\star:C^\infty_{tlc}(\Omega_\mathcal{T},\wedge^k\mathbb{R}^d)\rightarrow C^\infty_{tlc}(\Omega_\mathcal{T},\wedge^{d-k}\mathbb{R}^d)$ through $\star\eta = i_\mathcal{T}^{-1}\star i_\mathcal{T}\eta$.


We can define the leaf-wise derivative using the $\mathbb{R}^{d}$-action on $\Omega_{\mathcal{T}}$ as follows: for any vector $v\in\mathbb{R}^d$ we have
$$v:f\mapsto vf := \lim_{t\rightarrow 0} \frac{f \circ \varphi_{vt} - f}{t},$$
for any $f\in C^\infty_{tlc}(\Omega_\mathcal{T})$. This extends to maps $v:C^\infty_{tlc}(\Omega_\mathcal{T},\wedge^k\mathbb{R}^d)\rightarrow C^\infty_{tlc}(\Omega_\mathcal{T},\wedge^k\mathbb{R}^d)$ and in particular yields the differential $d:C^\infty_{tlc}(\Omega_\mathcal{T},\wedge^k\mathbb{R}^d)\rightarrow C^\infty_{tlc}(\Omega_\mathcal{T},\wedge^{k+1}\mathbb{R}^d)$ satisfying $d^2=0$. The cohomology of the complex $\{C^\infty_{tlc}(\Omega_\mathcal{T},\wedge^k\mathbb{R}^d)\}$ is the \textbf{foliated cohomology} of $\Omega_\mathcal{T}$ and it is denoted by $H^*(\mathbb{R}^d,C^{\infty}_{tlc}(\Omega_\mathcal{T},\mathbb{R}))$. The map (\ref{eqn:algIso}) is an isomorphism of the complexes $(\Delta_\mathcal{T}^k,d)$ and $(C^\infty_{tlc}(\Omega_\mathcal{T},\wedge^k\mathbb{R}^d),d)$ which intertwines the actions of the differentials. We summarize the above in a theorem, which can also be found in \cite[Theorem 23]{Kellendonk-Putnam:RS}.
\begin{theorem}
  \label{thm:algIso}
  The map $i_\mathcal{T}$ (\ref{eqn:algIso}) is an algebra isomorphism between the transversally locally functions which are smooth along leaves and smooth $\mathcal{T}$-equivariant functions. It yields an isomorphism of the foliated cohomology $H^*(\mathbb{R}^d,C^{\infty}_{tlc}(\Omega_\mathcal{T},\mathbb{R}))$ and the $\mathcal{T}$-equivariant cohomology $H^*(\Omega_\mathcal{T},\mathbb{R})$.
\end{theorem}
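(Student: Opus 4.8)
The plan is to reduce everything to the degree-zero statement — that $i_\mathcal{T}$ is an algebra isomorphism $C^\infty_{tlc}(\Omega_\mathcal{T})\to\Delta^0_\mathcal{T}$ — and then obtain the statement for forms and for cohomology by formal bookkeeping, following \cite[Theorem 23]{Kellendonk-Putnam:RS}. First I would check that $i_\mathcal{T}$ lands in $\Delta^0_\mathcal{T}$: if $f\in C^\infty_{tlc}(\Omega_\mathcal{T})$, then since $\Omega_\mathcal{T}$ is compact (here finite local complexity is used) a covering argument upgrades the pointwise transversal-local-constancy to a \emph{uniform} radius $R>0$ with $B_R(0)\cap\mathcal{T}'=B_R(0)\cap\mathcal{T}''\Rightarrow f(\mathcal{T}')=f(\mathcal{T}'')$; translating by $t$ shows $i_\mathcal{T}(f)$ is $\mathcal{T}$-equivariant of range $R$, and leafwise smoothness of $f$ is precisely smoothness of $t\mapsto f\circ\varphi_t(\mathcal{T})$. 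The identities $i_\mathcal{T}(f+g)=i_\mathcal{T}(f)+i_\mathcal{T}(g)$ and $i_\mathcal{T}(fg)=i_\mathcal{T}(f)\,i_\mathcal{T}(g)$ are immediate from the formula $i_\mathcal{T}(f)(t)=f(\varphi_t(\mathcal{T}))$.

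Injectivity is easy: $i_\mathcal{T}(f)\equiv 0$ forces $f$ to vanish on $\{\varphi_t(\mathcal{T}):t\in\mathbb{R}^d\}$, which is dense in $\Omega_\mathcal{T}$ by (\ref{eqn:space}), so $f\equiv 0$ by continuity. Surjectivity is the substantive point. Given $g\in\Delta^0_\mathcal{T}$, say $\mathcal{T}$-equivariant of range $R$, I would first define $f$ on the orbit of $\mathcal{T}$ by $f(\varphi_t(\mathcal{T})):=g(t)$, which is well defined because $\mathcal{T}$ is aperiodic, and then show $f$ extends continuously to $\Omega_\mathcal{T}$. If $\varphi_{t_n}(\mathcal{T})\to\mathcal{T}'$, then for each $r>0$ the patches $\varphi_{t_n}(\mathcal{T})\cap B_r(0)$ eventually agree up to a vanishing translation; $\mathcal{T}$-equivariance of $g$ (and of each partial derivative $\partial_I g$, which is again $\mathcal{T}$-equivariant of some larger range) then pins down $\lim_n g(t_n)$ independently of the approximating sequence, and one sets $f(\mathcal{T}')$ to be this limit. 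The resulting $f$ is transversally locally constant by construction, leafwise $C^\infty$ because $g$ together with all its derivatives extend continuously in this way, and $i_\mathcal{T}(f)=g$ on the orbit hence everywhere.

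For the passage to forms, fix the constant coframe $dx_1,\dots,dx_d$ on $\mathbb{R}^d$ and write a $tlc$ $k$-form uniquely as $\sum_I f_I\,dx_I$ with $f_I\in C^\infty_{tlc}(\Omega_\mathcal{T})$; setting $i_\mathcal{T}(\sum_I f_I\,dx_I)=\sum_I i_\mathcal{T}(f_I)\,dx_I$ and applying the scalar case coefficientwise gives the algebra isomorphism $C^\infty_{tlc}(\Omega_\mathcal{T},\wedge^k\mathbb{R}^d)\to\Delta^k_\mathcal{T}$, compatible with wedge products. To see it intertwines differentials, compute for $v\in\mathbb{R}^d$ that $i_\mathcal{T}(vf)(t)=\lim_{s\to 0}s^{-1}\big(f(\varphi_{vs}\varphi_t(\mathcal{T}))-f(\varphi_t(\mathcal{T}))\big)=(\partial_v i_\mathcal{T}(f))(t)$, so $i_\mathcal{T}$ carries the leafwise exterior derivative to the ordinary one on $\Delta^\bullet_\mathcal{T}$. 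Hence $i_\mathcal{T}$ is an isomorphism of cochain complexes, and taking cohomology identifies the foliated cohomology $H^*(\mathbb{R}^d,C^\infty_{tlc}(\Omega_\mathcal{T},\mathbb{R}))$ with the $\mathcal{T}$-equivariant cohomology $H^*(\Omega_\mathcal{T},\mathbb{R})$ defined in (\ref{eqn:PEC}).

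I expect the main obstacle to be exactly the continuous-extension step in the proof of surjectivity: one must argue that a pattern-equivariant function on $\mathbb{R}^d$ genuinely glues to a continuous function on the full Cantor-bundle space $\Omega_\mathcal{T}$ without losing leafwise regularity. This is where finite local complexity and compactness of $\Omega_\mathcal{T}$ are indispensable — they produce the uniform range $R$ and the finiteness of local patterns that control limits in the transverse direction; the remaining verifications are routine.
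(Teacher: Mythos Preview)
Your proposal is correct and follows the standard argument from \cite[Theorem 23]{Kellendonk-Putnam:RS}. Note, however, that the paper itself does not give a proof: the theorem is stated as a summary of the preceding discussion and attributed to Kellendonk--Putnam, so there is no ``paper's own proof'' to compare against beyond the reference you already cite.
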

There is also a relationship between the \v{C}ech cohomology $\check{H}^*(\Omega_\mathcal{T};\mathbb{R})$ and the $\mathcal{T}$-equivariant cohomology. The following is found in \cite[Theorem 20]{Kellendonk-Putnam:RS}, or \cite{Sadun:PE}.
\begin{theorem}
  \label{thm:PatternCech}
  For a tiling $\mathcal{T}$ of finite local complexity, the $\mathcal{T}$-equivariant cohomology $H^*(\Omega_\mathcal{T};\mathbb{R})$ is isomorphic to the \v{C}ech cohomology $\check{H}^*(\Omega_\mathcal{T};\mathbb{R})$.
\end{theorem}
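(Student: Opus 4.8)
The plan is to exhibit both $H^*(\Omega_\mathcal{T};\mathbb{R})$ and $\check H^*(\Omega_\mathcal{T};\mathbb{R})$ as the same direct limit, taken over a tower of finite CW complexes that approximate $\Omega_\mathcal{T}$. Write $\Gamma_n := (\Omega_\mathcal{T}\times\mathbb{R}^d)/\sim_n$ for the collared approximant complexes, each $\sim_n$ identifying $(\mathcal{T}_1,u_1)$ with $(\mathcal{T}_2,u_2)$ when the two tilings agree, after translation by $u_1-u_2$, on a neighbourhood of the origin that grows with $n$ (the collared level-$n$ supertiles of \S\ref{subsec:AP} for the substitution tilings of this paper, or simply the patch of radius $n$ in general), and let $\pi_n\colon\Gamma_{n+1}\to\Gamma_n$ be the forgetful bonding maps. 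Then $\Omega_\mathcal{T}=\varprojlim_n\Gamma_n$ (for the tilings constructed here this is Theorem~\ref{thm:inverseLimit}, with $\Gamma_n$ the rescaled complex $AP^n(\Omega_\mathcal{T})$ of (\ref{eqn:rescale})). On the \v{C}ech side, continuity of \v{C}ech cohomology under inverse limits of compact spaces (the argument of Theorem~\ref{thm:APcohomology}) gives $\check H^i(\Omega_\mathcal{T};\mathbb{R})=\varinjlim_n\check H^i(\Gamma_n;\mathbb{R})$, with connecting maps $\pi_n^*$.

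First I would match the $\mathcal{T}$-equivariant de Rham complex with the direct limit of the de Rham complexes of the $\Gamma_n$. A smooth form $\eta$ on $\mathbb{R}^d$ that is $\mathcal{T}$-equivariant of range $R$ is, by definition, constant along the fibres of the map $q_n\colon\mathbb{R}^d\to\Gamma_n$, $u\mapsto[(\mathcal{T},u)]$, as soon as $n$ is large enough that the patch recorded by $\sim_n$ at $u$ determines $\varphi_u(\mathcal{T})\cap B_R(0)$; hence $\eta=q_n^*\tilde\eta$ for a unique form $\tilde\eta$ on $\Gamma_n$ that is smooth on each cell and compatible across the branch locus, and conversely every such $\tilde\eta$ pulls back to a $\mathcal{T}$-equivariant form. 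These identifications intertwine $d$ and carry $\pi_n^*$ to the inclusion of the subcomplex of $\mathcal{T}$-equivariant forms that factor through $\Gamma_n$ into the one for $\Gamma_{n+1}$, so $\Delta^\bullet_\mathcal{T}=\varinjlim_n\Omega^\bullet(\Gamma_n)$. Since cohomology commutes with filtered direct limits, $H^*(\Omega_\mathcal{T};\mathbb{R})=\varinjlim_n H^*_{dR}(\Gamma_n)$.

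It then remains to apply the de Rham theorem to each approximant: although $\Gamma_n$ is a branched flat manifold rather than a manifold, it is a compact CW complex that is locally a cone on a finite complex, and the cell-wise-smooth forms on it admit subordinate partitions of unity and satisfy a local Poincar\'e lemma on these cone charts, so they form a fine resolution of the constant sheaf $\underline{\mathbb{R}}_{\Gamma_n}$; hence $H^*_{dR}(\Gamma_n)\cong H^*(\Gamma_n;\underline{\mathbb{R}})=\check H^*(\Gamma_n;\mathbb{R})$, naturally in $\Gamma_n$, so these isomorphisms assemble over the $\pi_n$. Passing to the direct limit yields $H^*(\Omega_\mathcal{T};\mathbb{R})\cong\check H^*(\Omega_\mathcal{T};\mathbb{R})$, and combined with Theorem~\ref{thm:algIso} this also identifies both with the foliated cohomology $H^*(\mathbb{R}^d,C^\infty_{tlc}(\Omega_\mathcal{T},\mathbb{R}))$.

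I expect the main obstacle to be the de Rham theorem for the non-manifold approximants $\Gamma_n$: one has to pin down the correct notion of "smooth form" on the branched complex and verify both the Poincar\'e lemma on its local cone models and the existence of adapted partitions of unity. An alternative that bypasses the approximants is to argue directly on $\Omega_\mathcal{T}$: by Proposition~\ref{prop:productStruct} the space is locally $\mathcal{C}\times V$ with $\mathcal{C}$ a Cantor set, and one checks that the sheaf of leafwise-smooth, transversally locally constant forms is a fine resolution of $\underline{\mathbb{R}}$ on $\Omega_\mathcal{T}$ --- fineness is immediate since the Cantor factor has clopen partitions, and local exactness follows from the leafwise Poincar\'e lemma applied fibrewise over $\mathcal{C}$, checking that transverse local constancy of the primitive is preserved --- giving the isomorphism with $\check H^*$ at once. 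In either route the substance is a Poincar\'e-lemma computation, and the bookkeeping with ranges (respectively bonding maps) is routine.
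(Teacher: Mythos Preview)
The paper does not prove this theorem: it simply cites \cite[Theorem 20]{Kellendonk-Putnam:RS} and \cite{Sadun:PE}. Your sketch is a faithful outline of precisely those two arguments. Your main route---writing $\Delta^\bullet_\mathcal{T}$ as the direct limit of the de Rham complexes of the branched approximants (which is the content of (\ref{eqn:pullbackForms}), itself quoted from \cite[Theorem 2]{Sadun:PE}) and then invoking a de Rham theorem for branched manifolds---is exactly Sadun's proof in \cite{Sadun:PE}, and you correctly flag the Poincar\'e lemma on the branched complexes as the one nontrivial step. Your alternative---building a fine resolution of $\underline{\mathbb{R}}$ on $\Omega_\mathcal{T}$ directly from the leafwise-smooth, transversally locally constant forms---is the Kellendonk--Putnam argument. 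One caveat: for a general FLC tiling (not assumed to come from a substitution), the inverse-limit presentation $\Omega_\mathcal{T}=\varprojlim_n\Gamma_n$ is not Theorem~\ref{thm:inverseLimit} of this paper but the G\"ahler construction; you gesture at this with ``the patch of radius $n$ in general,'' and that is indeed what is needed.
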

\subsection{Generators in $H^d(\Omega_x;\mathbb{R})$}
\label{subsec:generators}
Recall that for a type H family $\mathcal{F}$ there is a collection $\{\Gamma_i\}$ of AP complexes. Each AP complex $\Gamma_i$ has a CW-structure where the $d$-cells correspond to the image of the collared tiles in the projection giving the AP complex for any $\mathcal{T}\in \Omega_x$ and any $x\in U_i$. Denote by $P_1^i,\dots, P_{c(i)}^i$ the different patches corresponding to collared tiles of repetitive tilings in $\Omega_x$, $x\in U_i$. So for $x\in U_i$ and $j\in \{1,\dots, c(i)\}$, $P_j^i$ is a patch of any repetitive $\mathcal{T}\in \Omega_x$. Each patch has a distinguished point in its interior: since the $P_j^i$ are collared tiles they are of the form $P_j^i = \mathcal{T}'(t')$ for some tile $t'\in\mathcal{T}'$. As such, since $t'$ is a copy of a prototile $A_\iota$ and $A_\iota$ contains the origin, then the distringuished point in $P_j^i = \mathcal{T}'(t')$ corresponds to the point in $t'$ which is identified with the origin in $A_\iota$. This is independent of which tiling $x\in U_i$ and $\mathcal{T}'\in\Omega_x$ we use.

Let $r_{i,j}$ be the injectivity radius of $P^i_j$, $r_*$ be the minimum of all such injectivity radii and let $x\in U_i\subset \Sigma_N$. For a tiling $\mathcal{T}'\in\mho_x$, let $\Lambda_{\mathcal{T}'}^{i,j}$ be the set of vectors $\tau$ in $\mathbb{R}^d$ such that $\varphi_\tau(\mathcal{T}')\in\mho_x$ and $ \varphi_\tau(\mathcal{T}')$ contains the patch $P_j^i$ at the origin with its distinguished point exactly at the origin. In this manner, we construct $c(i)$ sets of vectors $\{\Lambda_{\mathcal{T}'}^{i,1},\dots, \Lambda_{\mathcal{T}'}^{i,c(i)}\}$, and use this to construct $c(i)$ forms $\eta^{i}_1, \dots, \eta^{i}_{c(i)} \in \Delta_\mathcal{T}^d$ as follows. Let $\rho$ be a positive, smooth bump function supported in a ball of radius $r_* /2$ and of integral 1 around the origin. Then
\begin{equation}
\label{eqn:generators}
\eta^i_j := \star\left(\sum_{x\in \Lambda_{\mathcal{T}'}^{i,j}}\rho * \delta_x\right)\in \Delta_\mathcal{T}^d,
\end{equation}
where $\star:\Delta_\mathcal{T}^0\rightarrow\Delta_\mathcal{T}^d$ is the Hodge-$\star$ operator. The forms $\eta_j^i$ can be easily described as follows. The sum of convolutions in (\ref{eqn:generators}) gives a function in $\Delta_\mathcal{T}^0$ which places a copy of the bump function $\rho$ around the distringuished point of all tiles in $\mathcal{T}'$ whose collaring is the patch $P^i_j$. Multiplying that function by the volume form gives us $\eta_j^i$.

Another way to obtain the forms $\eta_j^i$ is as follows. For $\mathcal{T}\in \Omega_x$ and $x\in U_i$, there is a projection map $\pi_{\mathcal{T}}:\mathbb{R}^d\rightarrow AP(\Omega_x)$. Then for each $j\in \{1,\dots, c(i)\}$ there exists a function $f_j^i$ obtained by placing the bump function $\rho$ on the cell of $AP(\Omega_x)$ corresponding to $P_j^i$ in such a way that $ \eta_j^i = \star \pi_{\mathcal{T}}^* f_j^i$. More generally, every form in $\Delta_{\mathcal{T}}^k$ is obtained from pulling back smooth $k$-forms on $AP^n(\Omega_x)$ from the canonical map $\pi_{\mathcal{T},n}:\mathbb{R}^d\rightarrow AP^n(\Omega_x)$ for some $n$. More specifically, by \cite[Theorem 2]{Sadun:PE} we have that
\begin{equation}
\label{eqn:pullbackForms}
\Delta_\mathcal{T}^k = \bigcup_{n\geq0} \pi_{\mathcal{T},n}^*(\Lambda^k(AP^n(\Omega_x))),
  \end{equation}
where $\Lambda^k(AP^n(\Omega_x))$ denotes the smooth $k$-forms on $AP^n(\Omega_x)$ (considering $AP^{n}(\Omega_{x})$ as a branched manifold, in the sense of \cite{Sadun:PE}).

Let $C_{k}AP(\Omega_x)$ denote the group of degree $k$ cellular cochains of $AP(\Omega_x)$. The set of $d$-cells of $AP(\Omega_x)$ is given by $c(i)$ cells (corresponding to the collared tiles $P_j^i$), so $C^{d}AP(\Omega_{x}) = \mathrm{Hom}(C_{d}AP(\Omega_x),\mathbb{R})$ is generated by the $c(i)$ forms $\star f_j^i$ dual to the collared patches $P_j^i$ and the pairing is obtained by integration over $AP(\Omega_x)$. As such, $H^d(AP(\Omega_x);\mathbb{R})$ is generated by the restriction of these forms to the kernel of the boundary map $\partial_d:C_{d}AP(\Omega_x)\rightarrow C_{d-1}AP(\Omega_x)$. In particular, $H^d(AP(\Omega_x);\mathbb{R})$ is generated by linear combinations of classes represented by the forms $\star f_j^i$.

Denote by $\gamma_{(n,m)}:AP(\Omega_{\sigma^n(x)})\rightarrow AP(\Omega_{\sigma^m(x)})$ the maps from the inverse system in (\ref{eqn:invLim}). It follows that, since the maps in (\ref{eqn:rescale}) are homeomorphisms, there exist maps $\bar{\gamma}_{(n,m)}:AP^n(\Omega_x)\rightarrow AP^m(\Omega_x)$ such that
$$\gamma_{(n,m)}\circ r_{n,x} = r_{m,x}\circ \bar{\gamma}_{(n,m)}.$$
\begin{proposition}
  \label{prop:generating}
Let $\mathcal{F} = (\mathcal{F}_1,\dots,\mathcal{F}_N)$ be a type H family and $\mu$ a minimal, ergodic $\sigma$-invariant probability measure. For $\mu$-almost every $x$, if $x\in U_i\subset \Sigma_N$, for any $\mathcal{T}'\in \Omega_x$ the forms $\eta_1^i,\dots, \eta_{c(i)}^{i}$ defined in (\ref{eqn:generators}) compose a generating set for $H^d(\Omega_x;\mathbb{R}^d)$. In other words, given $\mathcal{T}' \in \Omega_{x}$, any class $[\eta] \in H^{d}(\Omega_{x};\mathbb{R})$ is in the span of the set $\{[\eta_{j}^{i}]\} \subset H^{d}(\Omega_{x};\mathbb{R})$.

\end{proposition}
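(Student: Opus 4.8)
The plan is to reduce the statement to a surjectivity fact about the direct‑limit presentation of the top cohomology and then read that off from Proposition~\ref{prop:ES}. First I would identify the flavors of top cohomology in play: by Theorems~\ref{thm:algIso} and~\ref{thm:PatternCech} the $\mathcal{T}$-equivariant (equivalently, foliated) cohomology $H^{d}(\Omega_{x};\mathbb{R})$ is isomorphic to the \v{C}ech cohomology $\check{H}^{d}(\Omega_{x};\mathbb{R})$, and by Theorems~\ref{thm:inverseLimit} and~\ref{thm:APcohomology} the latter is the direct limit $\varinjlim_{k}H^{d}(AP(\Omega_{\sigma^{k}(x)});\mathbb{R})$ along the maps $\gamma_{k}^{*}$. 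Write $\iota_{0}$ for the canonical structure map from the bottom term $H^{d}(AP(\Omega_{x});\mathbb{R})$ into this direct limit; under the above identifications $\iota_{0}$ is the map on cohomology induced by the projection $\pi_{\mathcal{T}}\colon\mathbb{R}^{d}\to AP(\Omega_{x})$.

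Next I would record the two elementary facts that make the $\eta_{j}^{i}$ the right candidates. As recalled just before the statement, $H^{d}(AP(\Omega_{x});\mathbb{R})$ is generated by the classes of the forms $\star f_{j}^{i}$ dual to the $d$-cells $P_{j}^{i}$ (via universal coefficients, $H^{d}$ of a $d$-dimensional branched manifold is $\mathrm{Hom}(\ker\partial_{d},\mathbb{R})$, onto which the cellular cochains dual to the $d$-cells restrict surjectively). And since $\pi_{\mathcal{T}}$ is a leafwise isometry, $\star$ commutes with $\pi_{\mathcal{T}}^{*}$, so $\eta_{j}^{i}=\star\pi_{\mathcal{T}}^{*}f_{j}^{i}=\pi_{\mathcal{T}}^{*}(\star f_{j}^{i})$ and hence $[\eta_{j}^{i}]=\iota_{0}[\star f_{j}^{i}]$. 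Therefore the classes $[\eta_{1}^{i}],\dots,[\eta_{c(i)}^{i}]$ span precisely the image of $\iota_{0}$, and it remains only to show that $\iota_{0}$ is onto for $\mu$-almost every $x$.

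For the surjectivity of $\iota_{0}$ I would apply Proposition~\ref{prop:ES} to the one-sided direct system with $V_{k}=H^{d}(AP(\Omega_{\sigma^{k}(x)});\mathbb{R})$ and connecting maps $\gamma_{k+1}^{*}$. For $x\in U_{i}$ with $B_{x}(\mathcal{F})$ minimal, $AP(\Omega_{\sigma^{k}(x)})$ is one of the finitely many complexes $\Gamma_{1},\dots,\Gamma_{q}$ and depends only on a finite window $x_{k+1},\dots,x_{k+\ell}$ of coordinates, and the maps $\gamma_{k+1}^{*}$ likewise come from a finite list; after passing to the $\ell$-block recoding of $(\Sigma_{N},\sigma,\mu)$ (which preserves ergodicity and Poincar\'e recurrence, the only tools used in the proof of Proposition~\ref{prop:ES}) the system is of exactly the form treated there. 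Proposition~\ref{prop:ES} then yields, for $\mu$-a.e.\ $x$, a subspace $ES_{x}\subset V_{0}=H^{d}(AP(\Omega_{x});\mathbb{R})$ with $\iota_{0}(ES_{x})=W_{x}^{+}=\check{H}^{d}(\Omega_{x};\mathbb{R})$, and in particular $\iota_{0}$ is surjective; combining with the previous paragraph, $\{[\eta_{j}^{i}]\}_{j=1}^{c(i)}$ generates $H^{d}(\Omega_{x};\mathbb{R})$, as claimed.

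The substantive content is already packaged in Proposition~\ref{prop:ES}: the recurrence argument bounding how far up the tower one must climb before the ranks of the structure maps stabilize, which is what forces level‑$0$ forms to capture the full top cohomology for $\mu$-typical $x$ (something that genuinely fails for general minimal $x$). The one point in the present argument requiring real care is the compatibility claim that $\pi_{\mathcal{T}}^{*}$ on top cohomology agrees with the canonical inclusion of the bottom term into the direct limit once the isomorphisms of Theorems~\ref{thm:algIso}, \ref{thm:PatternCech}, \ref{thm:inverseLimit} and~\ref{thm:APcohomology} are made explicit; everything else, including the routine $\ell$-block recoding needed to invoke Proposition~\ref{prop:ES}, is bookkeeping.
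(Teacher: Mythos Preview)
Your proposal is correct and follows essentially the same route as the paper: both reduce to the surjectivity of the level-$0$ structure map into the direct limit, invoke Proposition~\ref{prop:ES} for that surjectivity, and then use that $H^{d}(AP(\Omega_{x});\mathbb{R})$ is spanned by the $[\star f_{j}^{i}]$ with $\pi_{\mathcal{T}}^{*}[\star f_{j}^{i}]=[\eta_{j}^{i}]$. The one place where the paper does more work than you is precisely the point you flag as ``requiring real care'': rather than asserting the compatibility of $\pi_{\mathcal{T}}^{*}$ with the canonical inclusion $\iota_{0}$ abstractly, the paper carries it out explicitly at the level of forms via the commutative diagram relating $AP^{k}(\Omega_{x})$, $AP(\Omega_{\sigma^{k}(x)})$, the rescalings $r_{k,x}$, and the projections $\pi_{\mathcal{T},k}$, chasing a given $\eta=\pi_{\mathcal{T},k}^{*}\omega$ down to an explicit equation $\eta=\sum_{j}\beta_{j}\eta_{j}^{i}+d\omega_{2}$ in $\Delta_{\mathcal{T}}^{d}$.
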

\begin{proof}
Let $x\in\Sigma_N$ be a typical point which satisfies the conclusion of Proposition \ref{prop:ES} and pick any $\mathcal{T}\in \Omega_x$.  Let $[\eta]\in H^d(\Omega_x;\mathbb{R})$, so $\eta\in\Delta^d_\mathcal{T}$. By (\ref{eqn:pullbackForms}), there exists a $k\geq 0$ such that $\eta = \pi_{\mathcal{T}, k}^* \omega$ for some $\omega\in \Lambda^d(AP^k(\Omega_x))$. Therefore $[\omega]$ denotes a class in $H^d(AP^k(\Omega_x);\mathbb{R})$ and through the homeomorphism $r_{k,x}$ we obtain the a class $[\bar{\omega}] = (r^{-1}_{k,x})^*[\omega]$ in $H^d(AP(\Omega_{\sigma^k(x)});\mathbb{R})$ represented by $\bar{\omega}\in \Lambda^d(AP(\Omega_{\sigma^k(x)}))$.

  It follows from Theorem \ref{thm:APcohomology} and Proposition \ref{prop:ES} that there exist $k_0$ and $[\omega_0]\in H^d(AP(\Omega_x);\mathbb{R})$ such that $\gamma_{k_0}^*\cdots\gamma_1^*[\omega_0] = \gamma_{k_0}^*\cdots \gamma_{k+1}^*[\bar{\omega}]$ in $H^d(AP(\Omega_{\sigma^{k_0}(x)}))$.
  As such, we have that
  \begin{equation}
    \label{eqn:equality}
 \gamma_{k_0}^*\cdots\gamma_1^*\omega_0 = \gamma_{k_0}^*\cdots \gamma_{k+1}^*\bar{\omega} + d\omega_1
    \end{equation}
in $\Lambda^d(AP(\Omega_{\sigma^{k_0}(x)}))$, for some $\omega_1\in \Lambda^{d-1}(AP(\Omega_{\sigma^{k_0}(x)}))$. Considering the commutative diagram
\[  \begin{tikzcd}
    AP(\Omega_x)   & AP(\Omega_{\sigma^k(x)})\arrow[swap]{l}{\gamma_{(k,1)}}  & AP(\Omega_{\sigma^{k_0}(x)})  \arrow[swap]{l}{\gamma_{(k_0,k)}}   \\
    AP(\Omega_x) \arrow{u}{r_{1,x}} & AP^k(\Omega_x) \arrow[swap]{l}{\bar{\gamma}_{(k,1)}}   \arrow{u}{r_{k,x}}      & AP^{k_0}(\Omega_x)  \arrow[swap]{l}{\bar{\gamma}_{(k_0,k)}}  \arrow{u}{r_{k_0,x}} \\
    \mathbb{R}^d \arrow{u}{\pi_{\mathcal{T},1}} & \mathbb{R}^d \arrow{l}{\mbox{id}}   \arrow{u}{\pi_{\mathcal{T},k}}      & \mathbb{R}^d  \arrow{l}{\mbox{id}}  \arrow{u}{\pi_{\mathcal{T},k_0}} \\
    \end{tikzcd}
\]
we obtain the dual diagram
\begin{equation}
  \label{eqn:dualDiagram}
\begin{tikzcd}
    \Lambda^d(AP(\Omega_x)) \arrow{d}{r_{1,x}^*} \arrow{r}{\gamma^*_{(k,1)}}  & \Lambda^d(AP(\Omega_{\sigma^k(x)})) \arrow{d}{r_{k,x}^*} \arrow{r}{\gamma^*_{(k_0,k)}}  & \Lambda^d(AP(\Omega_{\sigma^{k_0}(x)}))    \arrow{d}{r_{k_0,x}^*}  \\
    \Lambda^d(AP(\Omega_x)) \arrow{d}{\pi^*_{\mathcal{T},1}} \arrow{r}{\bar{\gamma}^*_{(k,1)}}  & \Lambda^d(AP^k(\Omega_x))\arrow{d}{\pi^*_{\mathcal{T},k}} \arrow{r}{\bar{\gamma}^*_{(k_0,k)}}  & AP^{k_0}(\Omega_x)  \arrow{d}{\pi^*_{\mathcal{T},k_0}} \\
  \pi^*_{\mathcal{T},1} \Lambda^d(AP(\Omega_x))   \arrow{r}{i_{1,k}} &  \pi^*_{\mathcal{T},k} \Lambda^d(AP^k(\Omega_x))  \arrow{r}{i_{k,k_0}}    & \pi^*_{\mathcal{T},k_0} \Lambda^d(AP^{k_0}(\Omega_x))   \\
    \end{tikzcd}
\end{equation}
where $i_{m,n}$ denotes the natural inclusion of one set of $\mathcal{T}$-equivariant forms into a larger set. Now, by (\ref{eqn:equality}), we have that
$$d\omega_1 = \gamma^*_{(k_0,1)}\omega_0 - \gamma^*_{(k_0,k)}\bar{\omega},$$
so using (\ref{eqn:dualDiagram}),
\begin{equation}
  \label{eqn:diagChase}
  \begin{split}
    \pi^*_{\mathcal{T},k_0}r^*_{k_0,x} d\omega_1 &=\pi^*_{\mathcal{T},k_0}r^*_{k_0,x} \gamma^*_{(k_0,1)}\omega_0 - \pi^*_{\mathcal{T},k_0}r^*_{k_0,x}\gamma^*_{(k_0,k)}\bar{\omega} =\pi^*_{\mathcal{T},k_0}r^*_{k_0,x} \gamma^*_{(k_0,1)}\omega_0 - i_{k,k_0}\pi^*_{\mathcal{T},k}r^*_{k,x}\bar{\omega}\\
    &=i_{1,k_0}\pi^*_{\mathcal{T},1}r^*_{1,x} \omega_0 - i_{k,k_0}\pi^*_{\mathcal{T},k_0}\omega =i_{1,k_0}\pi^*_{\mathcal{T},1}r^*_{1,x} \omega_0 - i_{k,k_0}\pi^*_{\mathcal{T},k_0}\omega \\
    &=i_{1,k_0}\pi^*_{\mathcal{T},1} \omega_0 - i_{k,k_0}\eta =i_{1,k_0}\pi^*_{\mathcal{T},1} \omega_0 - i_{k,k_0}\eta = \pi^*_{\mathcal{T},1} \omega_0 - \eta.
  \end{split}
\end{equation}
The pullback $\pi^*_{\mathcal{T},k_0}r^*_{k_0,x} d\omega_1$ of the exact form $d\omega_1$ is exact, so we denote it by $d\omega_2$. Finally, the class $[\omega_0]$ is represented by a linear combination of the forms $\star f_j^i$:
\begin{equation}
  \label{eqn:omega0}
  \omega_0 = \sum_{j=1}^{c(i)} \beta_j(\omega_0)\star f_j^i,
  \end{equation}
where $\beta_j(\omega_0)\in\mathbb{R}$. So we have from (\ref{eqn:diagChase}) that
$$d\omega_2 = \pi^*_{\mathcal{T},1} \omega_0 - \eta =  \sum_{j=1}^{c(i)} \beta_j(\omega_0)\pi^*_{\mathcal{T},1}\star f_j^i -\eta = \sum_{j=1}^{c(i)} \beta_j(\omega_0)\eta_j^i -\eta,$$
which concludes the proof.
\end{proof}
\subsection{A norm on $H^d(\Omega_x;\mathbb{R})$}
\label{subsec:norm}
Recall from \S \ref{subsec:AP} that given a type H family $\mathcal{F} = \{\mathcal{F}_1,\dots,\mathcal{F}_n\}$, there exists a partition $\{U_i\}$ of $\Sigma_n$ and CW-complexes $\{\Gamma_i\}$ such that $AP(\Omega_x) = \Gamma_i$ if $x\in U_i$. In order to endow $H^d(\Omega_x;\mathbb{R})$ with a norm, we first equip each $H^d(AP(\Omega_x);\mathbb{R})$ with a norm. Since each CW-complex $\Gamma_i$ is a finite complex, $H^d(\Gamma_i;\mathbb{R})$ is finite dimensional for each $i$ and so we can endow it with its natural $L^p$ norm $\|\cdot\|_p$. The following is a consequence of Proposition \ref{prop:ES}.

\begin{corollary}
\label{cor:cohomIdent}
  Let $\mathcal{F} = \{\mathcal{F}_1,\dots,\mathcal{F}_n\}$ be a type H family and $\mu$ a minimal ergodic $\sigma$-invariant probability measure. Then for $\mu$-almost every $x$, there is a subspace $ES_x^*\subset H^*(AP(\Omega_x);\mathbb{R})$ such that $H^*(\Omega_x;\mathbb{R})$ is naturally isomorphic to $ES_x^*$.
\end{corollary}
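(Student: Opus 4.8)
The plan is to derive this directly from the inverse-limit description of the cohomology of $\Omega_x$ together with the abstract direct-limit lemma, Proposition \ref{prop:ES}, with a small amount of symbolic bookkeeping to make the hypotheses of the latter literally apply.

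First I would set up the vector spaces. Recall from \S\ref{subsec:AP} that there is a finite partition $\{U_i\}$ of $\Sigma_N$ into cylinder sets and finite CW-complexes $\Gamma_1,\dots,\Gamma_q$ with $AP(\Omega_x)=\Gamma_i$ whenever $x\in U_i$ and $B_x$ is minimal, and that the cell containing $\sigma^k(x)$ depends only on finitely many coordinates $x_{k+1},\dots,x_{k+\ell}$; likewise the map $\gamma^*_{k+1}$ appearing in Theorem \ref{thm:APcohomology}, being induced by the substitution $\mathcal{F}_{x_{k+1}}$ between two such complexes, depends only on the window $x_{k+1},\dots,x_{k+\ell+1}$. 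After replacing $(\Sigma_N,\sigma,\mu)$ by its $\ell$-block recoding (again a full shift on a larger alphabet, with $\mu$ pushed forward to an ergodic $\sigma$-invariant measure and the minimal set still of full measure), I may assume the isomorphism type of $AP(\Omega_{\sigma^k(x)})$ is determined by a single symbol and that the connecting maps are indexed by pairs of consecutive symbols. Then set $V_i:=H^*(\Gamma_i;\mathbb{R})$, finite dimensional since each $\Gamma_i$ is a finite complex, with $\gamma^*_{i,j}\colon V_j\to V_i$ the induced linear maps; this is exactly the data of Proposition \ref{prop:ES} (one may run the argument in each fixed degree and take a direct sum, as the maps preserve degree).

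Next I would invoke Theorem \ref{thm:APcohomology} to identify $W_x^+=\varinjlim(V_{x_k},\gamma^*_{x_k,x_{k+1}})$ with $\check H^*(\Omega_x;\mathbb{R})$, and Theorems \ref{thm:algIso} and \ref{thm:PatternCech} to identify the latter with the $\mathcal{T}$-equivariant cohomology $H^*(\Omega_x;\mathbb{R})$. Proposition \ref{prop:ES} then yields, for $\mu$-almost every $x$, a subspace of $V_{x_0}=H^*(AP(\Omega_x);\mathbb{R})$ which the canonical map $\iota_x:=(\gamma_{x_0}^*)^\infty\colon H^*(AP(\Omega_x);\mathbb{R})\to W_x^+\cong H^*(\Omega_x;\mathbb{R})$ carries onto $W_x^+$. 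To promote surjectivity to an isomorphism I would use, as in the proof of Proposition \ref{prop:ES}, that for $x$ in the relevant full-measure set the ranks of the compositions $\gamma^*_{x_0\cdots x_m}$ are non-increasing and bounded below by $R_\mu$, hence stabilize at $R_\mu=\dim W_x^+$ for all large $m$; consequently $\ker\iota_x=\ker(\gamma^*_{x_0\cdots x_m})$ for such $m$. Taking $ES_x^*$ to be a linear complement of this kernel inside the subspace produced by Proposition \ref{prop:ES}, the restriction $\iota_x|_{ES_x^*}$ is a linear isomorphism onto $H^*(\Omega_x;\mathbb{R})$, and it is natural in the sense that it is the restriction of the canonical map into the inverse limit.

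There is no deep obstacle here, since the statement is a repackaging of results already in hand; the only point demanding care is verifying that, after the block recoding, the connecting maps of Theorem \ref{thm:APcohomology} genuinely depend on finitely many symbols in the precise pattern required by Proposition \ref{prop:ES}, and that the recoding preserves ergodicity and the almost-everywhere minimality that makes $AP(\Omega_x)=\Gamma_i$ hold. I would also want to remark on the (mild) non-canonicity of the complement $ES_x^*$, so that the word ``naturally'' is understood as referring to the isomorphism $\iota_x|_{ES_x^*}$ rather than to a preferred choice of subspace.
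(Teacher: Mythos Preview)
Your proposal is correct and follows the same route as the paper: the corollary is stated there simply as ``a consequence of Proposition \ref{prop:ES}'' applied to the direct-limit presentation of $\check H^*(\Omega_x;\mathbb{R})$ from Theorem \ref{thm:APcohomology}, with the remark that ``naturally isomorphic'' means every class in the direct limit has a representative in $H^*(AP(\Omega_x);\mathbb{R})$. Your block-recoding step and your passage from surjection to isomorphism via a complement of $\ker\iota_x$ are additional care the paper does not spell out, but the underlying argument is the same.
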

By naturally isomorphic, we mean that each class in the direct limit presentation of $H^*(\Omega_x;\mathbb{R})$ has a representative in $H^*(AP(\Omega_x);\mathbb{R})$. See the proof of Proposition \ref{prop:ES} for details. By the identification of $H^*(\Omega_x;\mathbb{R})$ with $ES_x^*\subset H^*(\Gamma_i;\mathbb{R})$ given by Corollary \ref{cor:cohomIdent}, for $x\in U_i$ we can now endow $H^*(\Omega_x;\mathbb{R})$ with a norm: the restriction of the $L^p$ norm in $H^*(\Gamma_i;\mathbb{R}) = H^*(AP(\Omega_x);\mathbb{R})$ to the subspace $ES_x^*\subset H^*(AP(\Omega_x);\mathbb{R})$.\\

Let us now define a specific norm which will be useful for the bounds needed in \S \ref{sec:erg} to prove the main theorem. Let $\{C_k^i\}$ be the cellular chain complex of $\Gamma_i$. Recall that since $C_d^i$ is generated by the $d$-faces $\{c_k\}$ of $\Gamma_i$, $\mathrm{Hom}(C_d^i,\mathbb{R})$ is generated by the dual $c(i)$ functions $\{f_1^i,\dots, f_{c(i)}^i\}$ introduced after (\ref{eqn:generators}), where the pairing comes by $f_j^i(c) = \int_c \star f_j^i$ for any $c\in C_d^i$. This pairing gives an $L^\infty$-type of norm on $\mathrm{Hom}(C_d^i,\mathbb{R})$ by
\begin{equation}
  \label{eqn:norm1}
\|f\|' = \max_{k}\left|f(c_k)\right| = \max_{k}\left|\int_{c_k} \star f \right|.
  \end{equation}
Now, for $x\in U_i$, since
$$H^d(AP(\Omega_x);\mathbb{R}) = \mathrm{Hom}\left.\left(C_d^i,\mathbb{R}\right)\right|_{\mathrm{ker}(\partial_d:C_d^i\rightarrow C_{d-1}^i)} = \left.\left\langle f_1^i,\dots, f_{c(i)}^i\right\rangle\right|_{\mathrm{ker}(\partial_d:C_d^i\rightarrow C_{d-1}^i)}$$
the space $H^d(AP(\Omega_x)$ is generated by a linear combination of the functions $\{f_j^i\}$. Furthermore, since by Corollary \ref{cor:cohomIdent} we have that $H^*(\Omega_x;\mathbb{R})\subset H^*(AP(\Omega_x))$, $H^*(\Omega_x;\mathbb{R})$ is also generated by a linear combination of the functions $\{f_j^i\}$. Thus the norm (\ref{eqn:norm1}) restricts to an $L^\infty$ norm on $H^d(\Omega_x;\mathbb{R})$ as follows. For a representative $\eta = \sum_{j=1}^{c(i)}\beta_j(\eta)\star f_j^i$ of a class in $H^d(\Omega_x;\mathbb{R})$ then
\begin{equation}
  \label{eqn:norm2}
  \|[\eta]\| = \max_k \left|\int_{c_k} \eta \right| = \max_k \left| \int_{c_k} \sum_{j=1}^{c(i)} \beta_j(\eta)\star f_j^i\right| = \max_k \left|  \sum_{j=1}^{c(i)} \beta_j(\eta)\int_{c_k} \star f_j^i\right| = \max_k \left| \beta_k(\eta)  \right|\geq 0
  \end{equation}
defines a norm on $H^d(\Omega_x;\mathbb{R})$. The following proposition shows this norm may be written in a slightly different way.
\begin{proposition}
\label{prop:norm}
Let $\mathcal{F} = (\mathcal{F}_1,\dots,\mathcal{F}_N)$ be a type H family and $\mu$ a minimal, ergodic $\sigma$-invariant probability measure. For $\mu$-almost every $x$, the function $\|\cdot\|:H^d(\Omega_x;\mathbb{R})\rightarrow\mathbb{R}$ defined by
\begin{equation}
  \label{eqn:norm3}
\|c\| = \max_{t\in \mathcal{T}}\left|\int_{t}\eta\right|
\end{equation}
where $\eta\in \Delta_\mathcal{T}^d$, $\mathcal{T}\in\Omega_x$, is the representative of the class $c\in H^d(\Omega_x;\mathbb{R})$ of the type given by Proposition \ref{prop:generating}, gives a norm on $H^d(\Omega_x;\mathbb{R})$.
\end{proposition}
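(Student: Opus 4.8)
The plan is to identify the quantity in (\ref{eqn:norm3}) with the function $\|[\eta]\|=\max_k\bigl|\int_{c_k}\eta\bigr|$ of (\ref{eqn:norm2}), which has already been shown to be a norm on $H^d(\Omega_x;\mathbb{R})$. Fix $x$ in the full-measure set on which $\mathcal{B}_x(\mathcal{F})$ is minimal and on which the conclusions of Propositions \ref{prop:UE} and \ref{prop:generating} and of Corollary \ref{cor:cohomIdent} hold, and say $x\in U_i$. Minimality of $\mathcal{B}_x(\mathcal{F})$ makes the $\mathbb{R}^d$-action on $\Omega_x$ minimal, so every $\mathcal{T}\in\Omega_x$ is repetitive and each of the $c(i)$ collared-tile patches $P^i_1,\dots,P^i_{c(i)}$ occurs in $\mathcal{T}$.

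The main computation is that if $t\in\mathcal{T}$ is a tile whose collared version is the patch $P^i_{j(t)}$, then $\int_t\eta^i_\ell=\delta_{\ell,j(t)}$ for every $\ell$. Recall that $\eta^i_\ell=\pi_{\mathcal{T}}^*(\star f^i_\ell)$, where $\star f^i_\ell$ is the $d$-form on $AP(\Omega_x)$ dual to the $d$-cell $c_\ell$ labelled by $P^i_\ell$; concretely, $\eta^i_\ell$ places a copy of the unit-mass bump function $\rho$, supported in a ball of radius $r_*/2$, at the distinguished point of each tile of $\mathcal{T}$ whose collaring is $P^i_\ell$, and multiplies by the volume form. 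Since $r_*$ does not exceed the injectivity radius of any $P^i_\ell$, every such ball lies in the interior of the tile carrying its center and is disjoint from all other tiles; hence the only possible contribution to $\int_t\eta^i_\ell$ is the bump centered at the distinguished point of $t$ itself, which is present precisely when the collaring of $t$ equals $P^i_\ell$. Equivalently, $\pi_{\mathcal{T}}$ carries $t$ onto $c_{j(t)}$, so $\int_t\eta^i_\ell=\int_{c_{j(t)}}\star f^i_\ell=f^i_\ell(c_{j(t)})=\delta_{\ell,j(t)}$. Writing the representative produced by Proposition \ref{prop:generating} as $\eta=\sum_{j=1}^{c(i)}\beta_j\eta^i_j$, it follows that $\int_t\eta=\beta_{j(t)}$ for every $t\in\mathcal{T}$.

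Taking the maximum over all $t\in\mathcal{T}$ and using repetitivity to see that the collaring index $j(t)$ exhausts $\{1,\dots,c(i)\}$, we obtain
$$\max_{t\in\mathcal{T}}\Bigl|\int_t\eta\Bigr|=\max_{1\le j\le c(i)}|\beta_j|=\max_k\Bigl|\int_{c_k}\eta\Bigr|,$$
which is exactly (\ref{eqn:norm2}). Since the right-hand side is a norm on $H^d(\Omega_x;\mathbb{R})$, so is (\ref{eqn:norm3}); in particular it is independent of the auxiliary choices of $\mathcal{T}$ and of the special representative $\eta$, and the norm axioms transfer directly. For instance, for the triangle inequality one notes that $\eta+\eta'$ is again a representative of the required type for the sum of the two classes, with coefficient vector $(\beta_j+\beta'_j)_j$, and $\max_j|\beta_j+\beta'_j|\le\max_j|\beta_j|+\max_j|\beta'_j|$; nondegeneracy is immediate since $\|c\|=0$ forces all $\beta_j=0$, hence $\eta=0$.

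The step requiring the most care is the well-definedness folded into the third paragraph: a priori the coefficient vector $(\beta_j)$ of the representative from Proposition \ref{prop:generating} need not be unique, since the classes $[\eta^i_j]$ may be linearly dependent in $H^d(\Omega_x;\mathbb{R})$. This is precisely where the identification of $H^d(\Omega_x;\mathbb{R})$ with $ES^*_x\subset H^d(AP(\Omega_x);\mathbb{R})$ from Corollary \ref{cor:cohomIdent} enters: one must check that $\int_{c_k}\eta$, and hence $\max_{t\in\mathcal{T}}|\int_t\eta|$, depends only on the cohomology class $c$, which is the same fact underlying the claim that (\ref{eqn:norm2}) is a well-defined norm. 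If that fact is not already secured by the discussion preceding the proposition, I would record it as a short lemma; otherwise it can simply be invoked here.
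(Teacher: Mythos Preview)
Your proof is correct and follows essentially the same approach as the paper: both arguments compute $\int_t\eta$ for the canonical representative $\eta=\sum_j\beta_j\eta^i_j$, show it equals $\beta_{j(t)}$, and then identify the resulting maximum with the already-established norm (\ref{eqn:norm2}). Your version is somewhat more explicit in justifying $\int_t\eta^i_\ell=\delta_{\ell,j(t)}$ via the support of the bump function and in invoking repetitivity to ensure every collared-tile type actually occurs in $\mathcal{T}$; the paper's proof compresses this into the pullback identity $\int_t\pi_{\mathcal{T},1}^*\star f^i_j=\int_{\pi_{\mathcal{T},1}t}\star f^i_j$.
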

\begin{proof}
  For such a minimal, ergodic $\sigma$-invariant probability measure $\mu$, let $x\in U_i\subset  \Sigma_N$ be such that the conclusion of Proposition \ref{prop:generating} holds. Let $c\in H^d(\Omega_x;\mathbb{R})$ be a class. By Proposition \ref{prop:generating} and its proof, for any $\mathcal{T}\in \Omega_x$, we have $[c] = [\eta_{c}]$ where
  $$\eta_c = \sum_{j=1}^{c(i)} \beta_j(c)\pi_{\mathcal{T},1}^*\eta_j^i\in \Delta_\mathcal{T}^d.$$
  So we have that
  \begin{equation*}
    \begin{split}
  \max_{t\in\mathcal{T}}\left|\int_t \eta_c\right| &= \max_{t\in\mathcal{T}}\left|\int_t  \sum_{j=1}^{c(i)} \beta_j(c)\pi_{\mathcal{T},1}^*\eta_j^i \right| = \max_{t\in\mathcal{T}}\left|  \sum_{j=1}^{c(i)} \beta_j(c)\int_t\pi_{\mathcal{T},1}^*\eta_j^i \right|= \max_{t\in\mathcal{T}}\left|  \sum_{j=1}^{c(i)} \beta_j(c)\int_{\pi_{\mathcal{T},1} t}\star f_j^i \right| \\
  &= \max_{k}\left| \beta_j(\eta) \int_{c_k} \star f_j^i \right| = \max_k \left|\beta_k(\eta)\right|.
  \end{split}
  \end{equation*}
 Comparing with (\ref{eqn:norm2}), the result follows.
\end{proof}

\section{The cohomology bundle}
\label{sec:bundle}
\begin{definition}
  Given a type H family $\mathcal{F} = \{\mathcal{F}_1,\dots, \mathcal{F}_N\}$, the \textbf{cohomology bundle} of this family is the trivial bundle $\mathcal{H}_{\mathcal{F}} := \Sigma_N\times H^d(\Omega)$ over $\Sigma_N$ having as fiber over $x\in\Sigma_N$ the vector space $H^d(\Omega_x;\mathbb{R})$.
\end{definition}
We endow each fiber $H^d(\Omega)$ of $\mathcal{H}_\mathcal{F}$ with the norm defined in \S \ref{subsec:norm}, and we write $\|\cdot \|_x$ for the norm on the fiber $H^d(\Omega_x;\mathbb{R})$. Since the bundle $\mathcal{H}_\mathcal{F}$ is over a Cantor set, the notion of a connection does not make sense right away. However, given a minimal, ergodic $\sigma$-invariant probability measure $\mu$, Corollary \ref{cor:cohomIdent} gives a way to compare nearby fibers for $\mu$-almost every fiber.
Since $\mathcal{F}$ is uniformly affine scaling, given $x\in\Sigma_N$, we denote by $A_{x_1} = \theta^{-1}_{x_1}\cdot \mathrm{Id} $ the expanding matrix associated with the maps in $\mathcal{F}_{x_1}$.
\begin{lemma}
  \label{lem:Abijection}
  Let $\mathcal{F}$ be a type H family and $\Omega_x$ be the tiling space for $x\in\Sigma_N$. Let $[\omega]\in H^d(\Omega_{\sigma(x)};\mathbb{R})$ where $\omega \in \Delta^{d}_{\mathcal{T}}$ for $\mathcal{T} \in \Omega_{\sigma(x)}$. Then $\Phi_x^*[\omega]\in H^d(\Omega_{x};\mathbb{R})$ is represented by the pattern-equivariant form $(A_{x_{1}}^{-1})^*\omega\in \Delta^d_{\Phi_x(\mathcal{T})}$.
\end{lemma}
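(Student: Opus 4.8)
The plan is to transport the statement, via the algebra isomorphism $i_{\mathcal{T}}$ of Theorem~\ref{thm:algIso}, to a concrete identity between pattern-equivariant $d$-forms on $\mathbb{R}^d$, where it reduces to a short computation with the conjugacy equation of Proposition~\ref{prop:conj}. Write $\theta := \theta_{x_1}$, so that $A_{x_1}^{-1} = \theta\,\mathrm{Id}$, and fix a base tiling $\mathcal{T}_0 \in \Omega_x$ with $\Phi_x(\mathcal{T}_0) = \mathcal{T}$; this $\mathcal{T}_0 = \Phi_x^{-1}(\mathcal{T})$ is the tiling in $\Omega_x$ relative to which the pulled-back form will be pattern-equivariant. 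First I would record three consequences of Proposition~\ref{prop:conj}: (a) $\Phi_x$ is a homeomorphism which is affine, hence smooth, along each leaf of the foliation of $\Omega_x$; (b) differentiating $\Phi_x\circ\varphi_t = \varphi_{\theta t}\circ\Phi_x$ shows that, in the $\mathbb{R}^d$-parametrizations of leaves coming from the translation actions, the leafwise differential of $\Phi_x$ is everywhere $\theta\,\mathrm{Id} = A_{x_1}^{-1}$; (c) since $\Phi_x$ and $\Phi_x^{-1}$ only remove, respectively add, the smallest level of hierarchical structure, they are local with a controlled radius (two tilings agreeing on a large ball are carried to tilings agreeing on a large ball), so $\Phi_x^*$ sends transversally locally constant, leafwise-smooth forms to transversally locally constant, leafwise-smooth forms and induces $\Phi_x^*\colon H^d(\Omega_{\sigma(x)};\mathbb{R}) \to H^d(\Omega_x;\mathbb{R})$ compatibly with the isomorphisms of Theorem~\ref{thm:algIso}.

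With these in place I would unwind the definitions. Let $\alpha := i_{\mathcal{T}}^{-1}(\omega)$ be the foliated $d$-form on $\Omega_{\sigma(x)}$ corresponding to $\omega$, so that $\omega_u = \alpha_{\varphi_u(\mathcal{T})}$ under the identification of the leafwise tangent space at $\varphi_u(\mathcal{T})$ with $\mathbb{R}^d$. Then $\Phi_x^*[\omega]$ is represented in $\Delta^d_{\mathcal{T}_0}$ by $i_{\mathcal{T}_0}(\Phi_x^*\alpha)$, and using $\Phi_x(\varphi_t(\mathcal{T}_0)) = \varphi_{\theta t}(\mathcal{T})$ together with (b),
\begin{multline*}
\bigl[i_{\mathcal{T}_0}(\Phi_x^*\alpha)\bigr]_t(v_1,\dots,v_d) = \bigl(\Phi_x^*\alpha\bigr)_{\varphi_t(\mathcal{T}_0)}(v_1,\dots,v_d) \\
= \alpha_{\varphi_{\theta t}(\mathcal{T})}(\theta v_1,\dots,\theta v_d) = \omega_{\theta t}(\theta v_1,\dots,\theta v_d),
\end{multline*}
which is exactly $\bigl[(A_{x_1}^{-1})^*\omega\bigr]_t(v_1,\dots,v_d)$ because $A_{x_1}^{-1}$ is the linear map $\theta\,\mathrm{Id}$. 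Thus $i_{\mathcal{T}_0}(\Phi_x^*\alpha) = (A_{x_1}^{-1})^*\omega$ as an element of $\Delta^d_{\mathcal{T}_0}$; in particular $(A_{x_1}^{-1})^*\omega$ is automatically $\mathcal{T}_0$-equivariant, which also proves the last clause of the statement. Passing to cohomology classes — legitimate because $i_{\mathcal{T}}$ and $i_{\mathcal{T}_0}$ intertwine the differentials, so closedness and exactness are preserved — gives $\Phi_x^*[\omega] = \bigl[(A_{x_1}^{-1})^*\omega\bigr]$ in $H^d(\Omega_x;\mathbb{R})$, as claimed.

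The step that requires genuine care, and the one I expect to be the main (though mild) obstacle, is (c): one must check that $\Phi_x^*$ is well defined on the transversally-locally-constant — equivalently, pattern-equivariant — complexes, i.e. that $\Phi_x$ respects the filtration by radius of pattern-equivariance. This follows from the explicit description of $\Phi_x^{\pm 1}$ in Proposition~\ref{prop:conj} as adding or removing one level of supertiles followed by a rescaling by $\theta^{\pm 1}$, but it should be made precise by tracking how the equivariance radius changes under these operations; everything else is a formal diagram chase through the isomorphism $i_{\mathcal{T}}$ of Theorem~\ref{thm:algIso} and the conjugacy equation of Proposition~\ref{prop:conj}.
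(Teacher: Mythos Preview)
Your proof is correct and follows essentially the same route as the paper: both transport the question through the isomorphisms $i_{\mathcal{T}}$ of Theorem~\ref{thm:algIso} and reduce it to the conjugacy equation of Proposition~\ref{prop:conj}. The paper's version is terser---it carries out the computation only for functions ($0$-forms), writing $i_{\mathcal{T}}\Phi_x^* i_{\Phi_x(\mathcal{T})}^{-1}f(t) = f(A_{x_1}^{-1}t)$ and leaving the extension to $d$-forms and the preservation of pattern-equivariance implicit---whereas you work directly with $d$-forms and single out point~(c) as deserving verification; but the argument is the same.
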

\begin{proof}
We trace back the action through the isomorphisms $i_\mathcal{T}:C^\infty_{tlc}(\Omega_x)\rightarrow \Delta^0_\mathcal{T}$ and $i_{\Phi_x(\mathcal{T})}:C^\infty_{tlc}(\Omega_{\sigma(x)})\rightarrow \Delta^0_{\Phi_x(\mathcal{T})}$ from Theorem \ref{thm:algIso}:
  \begin{equation*}
    \begin{split}
      \left(i_\mathcal{T}\Phi_x^* i_{\Phi_x(\mathcal{T})}^{-1}f\right)(t)&= \left(\Phi_x^* i_{\Phi_x(\mathcal{T})}^{-1} f \right) (\varphi_t(\mathcal{T})) = i^{-1}_{\Phi_x(\mathcal{T})}f(\Phi_x\circ\varphi_t(\mathcal{T})) \\
      &= i^{-1}_{\Phi_x(\mathcal{T})} f (\varphi_{A_{x_{1}}^{-1}t}\circ \Phi_x(\mathcal{T})) = \left(i_{\Phi_x(\mathcal{T})}i^{-1}_{\Phi_x(\mathcal{T})}f\right)(A_{x_{1}}^{-1}t) = f(A_{x_{1}}^{-1}t),
    \end{split}
  \end{equation*}
  where we used the conjugacy from Proposition \ref{prop:conj} in the third equality.
\end{proof}

\begin{definition}
  The \textbf{renormalization cocycle} is the map $\varsigma:\mathcal{H}_{\mathcal{F}}\rightarrow \mathcal{H}_{\mathcal{F}}$ defined as $(x,[\eta])\mapsto (\sigma(x),\Theta_x[\eta])$, where $\Theta_x := (\Phi_x^{-1})^*:H^d(\Omega_x;\mathbb{R})\rightarrow H^d(\Omega_{\sigma(x)};\mathbb{R})$.
\end{definition}
We will denote products as $A_{(n)_x} := A_{x_{n}}\dots A_{x_{1}}$. We now appeal to Oseledets theorem. In what follows $\|\cdot\|$ denotes the operator norm, and $\log^+(x) = \max\{0,\log(x)\}$.
\begin{theorem}[Oseledets theorem]
  \label{thm:oseledets}
    Let $\mathcal{F}_1,\dots, \mathcal{F}_N$ be a type H family. Let $\mu$ be an minimal ergodic, $\sigma$-invariant probability measure on $\Sigma_N$. Suppose furthermore that
    $$\int_{\Sigma_N}\log^+ \|\Theta_x\|\, d\mu < \infty.$$
    Then there exist Lyapunov exponents $\lambda_1\geq\lambda_2\geq \cdots\geq\lambda_{r_\mu}$ such that for $\mu$-almost all $x\in\Sigma_N$ there is a $\varsigma$-invariant measurable splitting of $\mathcal{H}_\mathcal{F}$
    \begin{equation}
      \label{eqn:splitting}
H^d(\Omega_x;\mathbb{R}) = \bigoplus_{i=1}^{r_\mu} E_i(x)
    \end{equation}
such that for any $[\eta]\in E_i(x) \backslash \{0\}$,
\begin{equation}
  \label{eqn:LyapExp}
  \lim_{n\rightarrow\infty}\frac{1}{n}\log\|\Theta_{\sigma^{n-1}(x)}\circ\cdots\circ \Theta_{x}[\eta]  \| = \lambda_i.
  \end{equation}
\end{theorem}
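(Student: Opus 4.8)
The plan is to deduce this statement from the classical multiplicative ergodic theorem (MET) of Oseledets, in its two-sided invertible form, applied to a measurable linear cocycle over the ergodic invertible system $(\Sigma_N,\sigma,\mu)$. Essentially everything asserted is literally the conclusion of the MET; the real content is (i) to realize the ``cohomology bundle'' $\mathcal{H}_\mathcal{F}$ — which is not a vector bundle over a manifold but a measurable family of spaces $H^d(\Omega_x;\mathbb{R})$ of a priori non-constant dimension over a Cantor set — as an honest measurable vector bundle of constant rank over a conull set, and (ii) to check that $x\mapsto\Theta_x$ is a measurable, fiberwise invertible cocycle meeting the integrability hypothesis.

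First I would fix, for the given minimal ergodic $\mu$, a $\sigma$-invariant conull set on which $\mathcal{B}_x(\mathcal{F})$ is minimal, on which $\Phi_x$ is a homeomorphism by Proposition \ref{prop:conj}, and on which the conclusion of Corollary \ref{cor:cohomIdent} holds, so that $H^d(\Omega_x;\mathbb{R})$ is naturally identified with a subspace $ES_x^*$ of $H^d(AP(\Omega_x);\mathbb{R}) = H^d(\Gamma_{i(x)};\mathbb{R})$, where $U_{i(x)}$ is the piece of the finite partition $\{U_i\}$ of $\Sigma_N$ containing $x$. Since there are only finitely many complexes $\Gamma_i$, I would fix once and for all a finite-dimensional real vector space $\mathbb{V}$ (for instance $\bigoplus_i H^d(\Gamma_i;\mathbb{R})$) with a fixed norm and linear embeddings $H^d(\Gamma_i;\mathbb{R})\hookrightarrow\mathbb{V}$; composing with Corollary \ref{cor:cohomIdent} this exhibits $x\mapsto H^d(\Omega_x;\mathbb{R})$ as a map $x\mapsto F_x$ into the Grassmannian of $\mathbb{V}$. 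This map is measurable and, by ergodicity, $\dim F_x$ is $\mu$-a.e. equal to a constant: on each $U_i$ the target $H^d(\Gamma_i;\mathbb{R})$ is fixed, and $x\mapsto ES_x^*$ is assembled, exactly as in the proof of Proposition \ref{prop:ES}, from images of finite compositions of the linear maps $\gamma_k^*$, each depending on only finitely many coordinates of $x$, with the rank stabilizing a.e. Thus $\{F_x\}$ is a measurable vector bundle of constant rank over a conull set, hence measurably trivial (admits a measurable frame). Finally, by Lemma \ref{lem:Abijection} together with Proposition \ref{prop:conj}, $\Theta_x=(\Phi_x^{-1})^*$ is a \emph{linear isomorphism} $H^d(\Omega_x;\mathbb{R})\to H^d(\Omega_{\sigma(x)};\mathbb{R})$, which under the above identifications is carried by a linear map $F_x\to F_{\sigma(x)}$ built from the integer transition matrix $\mathcal{M}(\mathcal{F}_{x_1})$ (a function of $x_1$ alone) and the measurable identifications; this is a measurable, fiberwise invertible cocycle over $\sigma$.

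It then remains to verify the integrability $\int_{\Sigma_N}\log^+\|\Theta_x\|\,d\mu<\infty$, which I expect to be automatic here: $\|\Theta_x\|$ is bounded in terms of the finitely many integer matrices $\mathcal{M}_1,\dots,\mathcal{M}_N$, the finitely many scalings $\theta_1,\dots,\theta_N$ (recall $A_{x_1}=\theta_{x_1}^{-1}\,\mathrm{Id}$ from Lemma \ref{lem:Abijection}), and the finitely many norm comparisons among the $\Gamma_i$, so $\log^+\|\Theta_x\|$ is essentially bounded; the same applies to $\Theta_x^{-1}$, giving the two-sided log-integrability needed for the invertible MET. Applying the MET to the measurable invertible cocycle $\Theta$ on the constant-rank measurable bundle $\{F_x\}$ over the ergodic invertible system $(\Sigma_N,\sigma,\mu)$ produces, for $\mu$-a.e. $x$, a $\varsigma$-invariant measurable splitting $H^d(\Omega_x;\mathbb{R})=\bigoplus_{i=1}^{r_\mu}E_i(x)$ with $\mu$-a.e. constant dimensions and a nonincreasing list of exponents $\lambda_1\geq\cdots\geq\lambda_{r_\mu}$ for which \eqref{eqn:LyapExp} holds for every nonzero $[\eta]\in E_i(x)$, which is exactly \eqref{eqn:splitting}. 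The main obstacle is not the ergodic theory, which is classical, but the bookkeeping of the preceding paragraph: turning the variable-dimensional family $x\mapsto H^d(\Omega_x;\mathbb{R})$ into a genuine measurable vector bundle on which the MET applies verbatim, and checking the measurability of $x\mapsto F_x$ and of the cocycle — this is precisely where the finiteness of the partition $\{U_i\}$ and of the AP complexes $\{\Gamma_i\}$, via Corollary \ref{cor:cohomIdent} and Proposition \ref{prop:ES}, is indispensable.
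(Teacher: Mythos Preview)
Your proposal is correct and in fact considerably more detailed than what the paper does: the paper provides no proof of this statement at all, treating it simply as an invocation of the classical Oseledets multiplicative ergodic theorem, and only remarks afterward that the integrability hypothesis is automatic because the cocycle takes finitely many values over the partition $\{U_i\}$. Your careful reduction---trivializing the cohomology bundle measurably via Corollary~\ref{cor:cohomIdent} and Proposition~\ref{prop:ES}, checking measurability and invertibility of $\Theta_x$, and observing that $\log^+\|\Theta_x^{\pm1}\|$ is essentially bounded---is exactly the bookkeeping one would need to justify that invocation rigorously, and it is entirely consistent with the paper's setup.
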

We note that the condition
$$\int_{\Sigma_N}\log^+ \|\Theta_x\|\, d\mu < \infty$$
holds in particular for the renormalization cocycle $\Theta_x = (\Phi_x^{-1})^*:H^d(\Omega_x;\mathbb{R})\rightarrow H^d(\Omega_{\sigma(x)};\mathbb{R})$; indeed, the cocycle takes finitely many values, over the partition $U_{i}$.
  \begin{definition}
The \textbf{rapidly expanding subspace} $E^+_x$ is the subspace spanned by the collection of Oseledets subspaces $E_i(x)$ in (\ref{eqn:splitting}) with Lyapunov exponent $\lambda_i$ in (\ref{eqn:LyapExp}) satisfying $\lambda_i \geq \frac{d-1}{d}\lambda_1$.
\end{definition}
\begin{definition}
  Let $\mathcal{F}$ be a type H family and $\mu$ be a minimal, $\sigma$-invariant probability measure on $\Sigma_N$. For the Lyapunov spectrum $\lambda_1\geq \lambda_2\geq \dots\geq \lambda_{r_\mu}$ of $\mu$, the \textbf{normalized Lyapunov exponents} $\nu_i$ are given by
  $$\nu_i = d\frac{\lambda_i}{\lambda_1}$$
  for all $i=1,\dots, r_\mu$. Note that $E_i(x)\subset E^+_x$ if and only if $\nu_i\geq d-1$.
\end{definition}
To compactify notation, we denote by
$$\Phi_x^{(n)} := \Phi_{\sigma^{n-1}(x)}\circ\cdots\circ\Phi_x$$
the composition of the maps from Proposition \ref{prop:conj} along orbits of $x$.
\begin{lemma}
  \label{lem:cocycleForms}
  Let $\mathcal{F} = (\mathcal{F}_1,\dots,\mathcal{F}_N)$ be a type H family and $\mu$ a minimal, ergodic $\sigma$-invariant probability measure.  For $\mu$-almost every $x$ and any $\mathcal{T}\in\Omega_x$ we have that
  $$\left\|\Theta_{\sigma^{n-1}(x)}\circ\cdots\circ \Theta_x[\eta]\right\|=  \max_{t\in\Phi_x^{(n)}(\mathcal{T})} \left|\int_{A_{(n)_x}t} \eta \right|,$$
  where the norm on the left is the one from (\ref{eqn:norm2}) (or, equivalently, from (\ref{eqn:norm3})) and $\eta\in \Delta_\mathcal{T}^d$ is the representative of the class $[\eta]$ given in Proposition \ref{prop:generating}.
\end{lemma}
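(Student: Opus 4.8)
The plan is to identify the pattern-equivariant $d$-form representing $\Theta_{\sigma^{n-1}(x)}\circ\cdots\circ\Theta_x[\eta]$, to rewrite its integrals over tiles as integrals over level-$n$ supertiles of $\mathcal{T}$, and then to apply the norm formula of Proposition~\ref{prop:norm}. I will write $\Theta^{(n)}_x:=\Theta_{\sigma^{n-1}(x)}\circ\cdots\circ\Theta_x=\bigl((\Phi^{(n)}_x)^{-1}\bigr)^{*}$ and work with a $\mu$-typical $x$, so that Propositions~\ref{prop:conj}, \ref{prop:generating}, \ref{prop:norm} and Lemma~\ref{lem:Abijection} hold along the whole orbit of $x$. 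First I would run the computation in the proof of Lemma~\ref{lem:Abijection} with $\Phi_x^{-1}$ in place of $\Phi_x$: since $\Phi_x^{-1}\circ\varphi_t=\varphi_{A_{x_1}t}\circ\Phi_x^{-1}$, this replaces $(A_{x_1}^{-1})^{*}$ by $(A_{x_1})^{*}$, and shows that $\Theta_x$ sends the class of a pattern-equivariant form $\zeta\in\Delta^{d}_{\mathcal{S}}$ ($\mathcal{S}\in\Omega_x$) to the class of $A_{x_1}^{*}\zeta\in\Delta^{d}_{\Phi_x(\mathcal{S})}$. Applying this successively at $x,\sigma(x),\dots,\sigma^{n-1}(x)$, and using that the homotheties $A_{x_1},\dots,A_{x_n}$ commute with product $A_{(n)_x}$, I obtain that $\Theta^{(n)}_x[\eta]\in H^{d}(\Omega_{\sigma^{n}(x)};\mathbb{R})$ is represented by $A_{(n)_x}^{*}\eta\in\Delta^{d}_{\Phi^{(n)}_x(\mathcal{T})}$, where $A_{(n)_x}^{*}$ denotes pull-back of forms by the homothety $A_{(n)_x}=(\theta_{x_1}\cdots\theta_{x_n})^{-1}\,\mathrm{Id}$.

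Next, for a tile $t$ of $\Phi^{(n)}_x(\mathcal{T})$, change of variables gives $\int_{t}A_{(n)_x}^{*}\eta=\int_{A_{(n)_x}t}\eta$; by the hierarchical-shifting statement of Proposition~\ref{prop:conj} iterated $n$ times, $A_{(n)_x}t$ is a level-$n$ supertile $t^{(n)}$ of $\mathcal{T}$, and $t\mapsto A_{(n)_x}t$ is a bijection from the tiles of $\Phi^{(n)}_x(\mathcal{T})$ onto the level-$n$ supertiles of $\mathcal{T}$. Since $\eta=\sum_j\beta_j\,\eta^{i}_j$ is the representative from Proposition~\ref{prop:generating}, it is pulled back from $AP(\Omega_x)$; as each tile of $\mathcal{T}$ maps onto a $d$-cell of $AP(\Omega_x)$, the number $\int_{t^{(n)}}\eta$ is a sum, over the tiles of $\mathcal{T}$ contained in $t^{(n)}$, of quantities depending only on their collaring types, and because such a collaring uses only tiles of $t^{(n)}$ and of the supertiles adjacent to it, $\int_{t^{(n)}}\eta$ depends only on the collared level-$n$ supertile $\mathcal{T}(t^{(n)})$ — equivalently, only on the collaring type of $t$ in $\Phi^{(n)}_x(\mathcal{T})$.

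I would then let $\omega:=\sum_{j'}\tilde\beta_{j'}\,\eta^{i'}_{j'}$ be the combination of the generating forms of $\Omega_{\sigma^{n}(x)}$ ($\sigma^{n}(x)\in U_{i'}$) whose coefficient on $\eta^{i'}_{j'}$ equals the common value of $\int_{t}A_{(n)_x}^{*}\eta$ over tiles $t$ of collaring type $j'$. Then $A_{(n)_x}^{*}\eta$ and $\omega$ are pattern-equivariant $d$-forms of small range with the same integral over every tile of $\Phi^{(n)}_x(\mathcal{T})$, so they pull back from $AP(\Omega_{\sigma^{n}(x)})$ to the same cellular cochain and hence differ by a coboundary; thus $\omega$ is the representative of $\Theta^{(n)}_x[\eta]$ of the type produced by Proposition~\ref{prop:generating}, and Proposition~\ref{prop:norm} (applied with the tiling $\Phi^{(n)}_x(\mathcal{T})$) gives
$$\bigl\|\Theta^{(n)}_x[\eta]\bigr\|=\max_{t\in\Phi^{(n)}_x(\mathcal{T})}\Bigl|\int_{t}\omega\Bigr|=\max_{t\in\Phi^{(n)}_x(\mathcal{T})}\Bigl|\int_{t}A_{(n)_x}^{*}\eta\Bigr|=\max_{t\in\Phi^{(n)}_x(\mathcal{T})}\Bigl|\int_{A_{(n)_x}t}\eta\Bigr|,$$
which is the asserted identity.

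The hard part will be this last step. The form $A_{(n)_x}^{*}\eta$ lives at a far smaller scale than the generating forms of $\Omega_{\sigma^{n}(x)}$, so it is not literally of the shape required by Proposition~\ref{prop:norm}, and one must verify that it is nonetheless cohomologous to such a form. This rests on the fact that the integral of a pattern-equivariant top form over a tile sees only the collared-tile type, together with the de Rham rigidity of top forms on the branched Anderson--Putnam complex; put differently, it amounts to checking that, in the generating bases of \S\ref{subsec:generators}, the renormalization cocycle is the incidence matrix between collared level-$n$ supertiles and collared tiles.
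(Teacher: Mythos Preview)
Your argument is correct and arrives at the same identity, but the key step---showing that $\max_{t}\bigl|\int_{t}A_{(n)_x}^{*}\eta\bigr|$ equals the norm of $\Theta_x^{(n)}[\eta]$---is handled differently from the paper. The paper takes the representative $\eta'=\sum_{j'}\beta'_{j'}\eta_{j'}^{i'}$ of $\Theta_x^{(n)}[\eta]$ provided by Proposition~\ref{prop:generating}, observes that both $\eta'$ and $A_{(n)_x}^{*}\eta$ are supported away from the tile boundaries of $\mathcal{T}_n:=\Phi_x^{(n)}(\mathcal{T})$ (the former by the small-bump construction of \eqref{eqn:generators}, the latter because boundaries of tiles of $\mathcal{T}_n$ map under $A_{(n)_x}$ to level-$n$ supertile boundaries of $\mathcal{T}$, where $\eta$ already vanishes), writes $\eta'-A_{(n)_x}^{*}\eta=d\omega_n$, and concludes $\int_t\eta'=\int_tA_{(n)_x}^{*}\eta$ for every tile $t$ via Stokes. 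You instead go the other way: you manufacture $\omega=\sum_{j'}\tilde\beta_{j'}\eta_{j'}^{i'}$ with the correct tile integrals, and then invoke the de~Rham/cellular isomorphism on $AP(\Omega_{\sigma^n(x)})$ to deduce that $\omega$ is cohomologous to $A_{(n)_x}^{*}\eta$, hence is a legitimate Proposition~\ref{prop:generating} representative. Your route is more structural and makes explicit that, in the generating bases, the renormalization cocycle is the collared-supertile incidence matrix; the paper's route is shorter and exploits directly the support geometry of the bump forms. One detail to sharpen: to place $A_{(n)_x}^{*}\eta$ on $AP(\Omega_{\sigma^n(x)})$ you need the \emph{form} (not merely its tile integrals) to depend only on the collared type of $t\in\mathcal{T}_n$; this holds for the reason you indicate---the level-$0$ collared types occurring inside a level-$n$ supertile are determined by the collared level-$n$ supertile---but you should state it for the form itself, not just for $\int_{t^{(n)}}\eta$.
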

\begin{proof}
  For such a minimal, ergodic $\sigma$-invariant probability measure $\mu$, let $x\in U_i\subset  \Sigma_N$ be such that the conclusion of Proposition \ref{prop:generating} holds, and pick $\mathcal{T}\in\Omega_x$. By Lemma \ref{lem:Abijection}, for any $n>0$ there is a $\mathcal{T}_n  = \Phi_x^n(\mathcal{T})\in\Omega_{\sigma^n(x)}$ and $ \varepsilon_n>0$ such that $A^*_{(n)_x}\eta\in \Delta_{\mathcal{T}_n}^d$ is a representative of the class $\Theta_{\sigma^{n-1}(x)}\circ\cdots\circ \Theta_x[\eta]$ and, moreover, $\mathrm{supp}\, A^*_{(n)_x}\eta \cap \, N_{\varepsilon_n}(\partial t) = \varnothing $ for all $t\in \mathcal{T}_n$, where $N_\varepsilon(S)$ denotes the $\varepsilon$-neighborhood of the set $S$. In other words, the form $A^*_{(n)_x}\eta$ is supported way from the union of the boundaries of the tiles of $\mathcal{T}_n$. We note that $\mathcal{T}_n = \Phi_x^n(\mathcal{T})$ and $\mathcal{T}$ are related in a very special way: by Proposition \ref{prop:conj}, the tiling $\mathcal{T}$ is obtained from the tiling  $\mathcal{T}_n$ by performing $n$ substitutions and inflations according to the substitution rules $\mathcal{F}_{x_{n}}, \mathcal{F}_{x_{n-1}},\dots , \mathcal{F}_{x_{1}}$. This is why the support of $A^*_{(n)_x}\eta$ is contained in the interior of the tiles of $\mathcal{T}_n$.

  Now, applying the construction of Proposition \ref{prop:generating} to the class $[A^*_{(n)_x}\eta] $ we obtain a form
$$\eta' = \sum_{j=1}^{c(i')}\beta_j(\eta')\eta_j^{i'},$$
  where the forms $\eta_j^{i'}\in \Delta_{\mathcal{T}_n}^d$ come from (\ref{eqn:generators}). Since they both represent the same class, we have that $ \eta_j^{i'}- A^*_{(n)_x}\eta = d\omega_n$. Since both $A^*_{(n)_x}\eta$ and $\eta_j^{i'}$ are supported away from the union of the boundaries of all tiles $t\in\mathcal{T}_n$, so is $d\omega_n$. Thus we have
  \begin{equation*}
    \begin{split}
      \|\Theta_{\sigma^{n-1}(x)}\circ\cdots\circ \Theta_x[\eta]\|&=  \max_{t\in \mathcal{T}_n} \left|\int_{t} \eta' \right| = \max_{t\in \mathcal{T}_n} \left|\int_{t} A^*_{(n)_x}\eta + d\omega_n \right| = \max_{t\in \mathcal{T}_n} \left|\int_{t} A^*_{(n)_x}\eta + \int_{\partial t}\omega_n \right| \\
  &= \max_{t\in \mathcal{T}_n} \left|\int_{t} A^*_{(n)_x}\eta \right|= \max_{t\in \mathcal{T}_n} \left|\int_{A_{(n)_x}t} \eta \right|
    \end{split}
  \end{equation*}
  where the first equality follows from (\ref{eqn:norm3}) in Proposition \ref{prop:norm}.
  \end{proof}

\section{Ergodic integrals}
\label{sec:erg}
Given that we will study averages of functions, we need to define the types of averaging sets which will be used. Given a compact set $B$ with non-empty interior, denote by $T\cdot B$ the one-parameter family of sets obtained from $B$ through
$$T\cdot B := \mathrm{diag}(T)\cdot B.$$
As such, we have that $\mathrm{Vol}(T\cdot B) = \mathrm{Vol}(B) T^d$.

Recall by Lemma \ref{lem:Abijection} that the renormalization map acts on forms through scaling matrices. More precisely, for $\mathcal{T}\in \Omega_x$ if $\eta\in \Delta_\mathcal{T}^d$ represents a class in $H^d(\Omega_x;\mathbb{R})$, then $\Theta_x[\eta]$ is represented by $A_{x_{1}}^*\eta\in \Delta^d_{\Phi_x(\mathcal{T})}$, where $A_{x_{1}}$ is the diagonal matrix with all entries $\theta_{x_{1}}^{-1}$. To reduce the amount of tedious notation, we denote the renormalization cocyle actions by 
\begin{equation}
  \label{eqn:cocycles}
  \Theta_{x}^{(n)} := \Theta_{\sigma^{n-1}(x)}\circ\cdots\circ \Theta_x, \hspace{.25in}    A_{(n)_x}:= A_{x_{n}}\cdots A_{x_{1}},  \hspace{.25in} \mbox{ and }\hspace{.25in} \theta_{(n)_x} := \theta_{x_{1}}\cdots \theta_{x_{n}}.
\end{equation}
Finally, for a type H family $\mathcal{F} = \{\mathcal{F}_1,\dots, \mathcal{F}_N\}$ and $x\in\Sigma_N$, let $T_{x,n} = \theta_{(n)_x}$. As such, we have
\begin{equation}
  \label{eqn:TimeScaling}
 A_{(n)_x} = \mathrm{diag}(T_{x,n}).
\end{equation}
We now make a basic observation about the leading Lyapunov exponent $\lambda_1$. Let $x\in \Sigma_N$ be an Oseledets regular point for the renormalization cocycle for some ergodic, minimal $\sigma$-invariant probability measure $\mu$. By Lemma \ref{lem:Abijection},
if the volume form $\star 1$ represents the class $[\eta_1]\in H^d(\Omega_x;\mathbb{R})$, then $A_{(n)_x}^*(\star 1) = \mathrm{det}(A_{(n)_x})(\star 1) = \theta_{(n)_x}^{-d}(\star 1)=(\theta_{x_{1}}\dots \theta_{x_{n}})^{-d}(\star 1)$ represents the class $\Theta_x^{(n)}[\eta_1]\in H^d(\Omega_{\sigma^{1}(x)};\mathbb{R})$. Thus, Oseledets theorem establishes that
\begin{equation}
  \label{eqn:Oseledets3}
\lim_{n\rightarrow \infty}\frac{\log (\theta_{x_{1}}\dots \theta_{x_{n}})^{-d}}{n} = d\lim_{n\rightarrow \infty} \frac{1}{n}\sum_{i=1}^n -\log \theta_{x_{i}}= \lambda_1.
\end{equation}
Using (\ref{eqn:TimeScaling}), it follows that
\begin{equation}
\label{eqn:returnTimes}
  \lim_{n\rightarrow\infty}\frac{\log T_{n,x}}{n} = \lim_{n\rightarrow\infty}\frac{\log \theta_{(n)_x}}{n} = \lim_{n\rightarrow \infty} \frac{1}{n}\sum_{i=1}^n -\log \theta_{x_{i}}= \frac{\lambda_1}{d}.
\end{equation}
\subsection{Upper bound}
\begin{lemma}
\label{lem:packing}
  Let $\mathcal{F}$ be a type H family and $\mu$ a minimal, $\sigma$-invariant ergodic probability measure on $\Sigma_N$. For $B$ a Lipschitz domain with non-empty interior and tiling $\mathcal{T}\in\Omega_x$ and $T>0$ there exists an integer $n = n(T,B)$ and a decomposition
  \begin{equation}
    \label{eqn:decomp3}
    \mathcal{O}_\mathcal{T}^-(T\cdot B) = \bigcup_{i = 0}^n\bigcup_{j=1}^M \bigcup_{k=1}^{\kappa_j^{(i)}}t^{(i)}_{j,k},
  \end{equation}
where $t^{(i)}_{j,k}$ is a level-$i$ supertile of the tiling $\mathcal{T}$ of type $j$, such that
  \begin{enumerate}
  \item $\kappa_j^{(n)}\neq 0$ for some $j$ and $\mathrm{Vol}(T\cdot B)\leq K_1 \theta_{(n)_x}^{-d}$,
    \item $\sum_{j=1}^M \kappa_j^{(i)} \leq K_2 \mathrm{Vol}(\partial T\cdot B)  \theta_{(i)_x}^{d-1}$ for $i = 0,\dots, n-1$
  \end{enumerate}
  for some $K_1, K_2$ which depend only on $\mathcal{F}$ and $B$.
\end{lemma}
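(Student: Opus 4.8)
The goal is to chop up the part of $\mathcal{T}$ sitting inside $T\cdot B$ into supertiles of various levels, with the top-level count recording the volume of $T\cdot B$ and the intermediate-level counts controlled by the boundary of $T\cdot B$. The natural construction is a greedy, top-down decomposition of the hierarchical structure of $\mathcal{T}$: start with the largest supertiles that fit, then fill the leftover region with the next-largest, and so on. I will carry this out as follows.

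First, fix $\mathcal{T}\in\Omega_x$ and $T>0$. Choose $n=n(T,B)$ to be the largest integer such that some level-$n$ supertile of $\mathcal{T}$ is completely contained in $T\cdot B$; this exists and is finite because the supertiles $\mathcal{P}_k$ grow without bound (they eventually cover $\mathbb{R}^d$, by the remarks after the definition of approximants and Lemma \ref{lem:cover}) while $T\cdot B$ is bounded, and because level-$0$ supertiles are just tiles, which certainly fit once $T$ is large. Now build the decomposition recursively downward: let $\mathcal{O}^{(n)}$ be the union of all level-$n$ supertiles of $\mathcal{T}$ contained in $T\cdot B$, and having removed $\mathcal{O}^{(n)},\dots,\mathcal{O}^{(i+1)}$, let $\mathcal{O}^{(i)}$ be the union of all level-$i$ supertiles of $\mathcal{T}$ contained in $\mathcal{O}^-_\mathcal{T}(T\cdot B)$ but not already used. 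Because every level-$(i+1)$ supertile is a union of level-$i$ supertiles (the hierarchical nesting $\mathcal{P}_i\subset\mathcal{P}_{i+1}$ from \S\ref{sec:blowups}), and because a level-$0$ supertile is an individual tile, after reaching level $0$ we have exhausted all of $\mathcal{O}^-_\mathcal{T}(T\cdot B)$. This gives the decomposition \eqref{eqn:decomp3}, with $\kappa_j^{(i)}$ the number of level-$i$ supertiles of type $j$ used at stage $i$; and $\kappa_j^{(n)}\neq 0$ for some $j$ by the choice of $n$.

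For (i): a single level-$n$ supertile fits inside $T\cdot B$, and its volume is bounded below by $\min_\iota \mathrm{Vol}(A_\iota)\cdot\theta_{(n)_x}^{-d}$ (the inflation factor from \eqref{eqn:approx}), so $\mathrm{Vol}(T\cdot B)\geq \mathrm{Vol}(T\cdot B)\ge \mathrm{Vol}(\text{that supertile})$ gives the lower bound on $\theta_{(n)_x}^{-d}$, and rearranging yields $\mathrm{Vol}(T\cdot B)\le K_1\theta_{(n)_x}^{-d}$ with $K_1 = \max_\iota\mathrm{Vol}(A_\iota)/\min_\iota \mathrm{Vol}(A_\iota)$ — wait, more carefully: maximality of $n$ says no level-$(n+1)$ supertile fits, so $T\cdot B$ is contained in a bounded neighborhood of a single level-$(n+1)$ supertile, whence $\mathrm{Vol}(T\cdot B)\le C\,\theta_{(n+1)_x}^{-d}\le C\theta_{\min}^{-d}\theta_{(n)_x}^{-d}=:K_1\theta_{(n)_x}^{-d}$, using that $\theta_{x_{n+1}}\ge\theta_{\min}:=\min_i\theta_i>0$. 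For (ii): a level-$i$ supertile gets used at stage $i$ only if it is contained in $\mathcal{O}^-_\mathcal{T}(T\cdot B)$ but its containing level-$(i+1)$ supertile is not — hence that level-$(i+1)$ supertile meets $\partial(T\cdot B)$, so the level-$i$ supertile lies within distance $\mathrm{diam}(\text{level-}(i+1)\text{ supertile})\le C'\theta_{(i)_x}^{-1}$ of $\partial(T\cdot B)$ (using $\theta_{x_{i+1}}\ge\theta_{\min}$ again). Thus all level-$i$ supertiles used at stage $i$ are packed, with bounded overlap multiplicity, into the $C'\theta_{(i)_x}^{-1}$-neighborhood of $\partial(T\cdot B)$, whose volume is $O\!\big(\mathcal{H}^{d-1}(\partial(T\cdot B))\cdot\theta_{(i)_x}^{-1}\big)$ because $B$ — and hence $T\cdot B$ — is a Lipschitz domain with $\mathcal{H}^{d-1}(\partial(T\cdot B))<\infty$ (this is exactly where ``good Lipschitz domain'' enters; for a general Lipschitz domain with non-empty interior one still gets the volume bound on the tube up to the finite constant from the Lipschitz charts). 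Dividing by the minimal volume $\min_\iota\mathrm{Vol}(A_\iota)\theta_{(i)_x}^{-d}$ of a level-$i$ supertile gives $\sum_j\kappa_j^{(i)}\le K_2\,\mathrm{Vol}(\partial(T\cdot B))\,\theta_{(i)_x}^{d-1}$ — here I am writing $\mathrm{Vol}(\partial T\cdot B)$ for $\mathcal{H}^{d-1}(\partial(T\cdot B))$ as in the statement — with $K_2$ absorbing the packing multiplicity, the constants $C'$, and the volume ratios, all depending only on $\mathcal{F}$ and on the Lipschitz charts of $B$.

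The main obstacle is the boundary-tube volume estimate $\mathrm{Vol}\big(\partial_r(T\cdot B)\big)\le C\,\mathcal{H}^{d-1}(\partial(T\cdot B))\,r$ together with the bounded-overlap packing of congruent supertiles into that tube: one must check that Lipschitz-ness of $\partial B$ (equivalently $(d-1)$-rectifiability of $\partial B$ with finite $\mathcal{H}^{d-1}$ measure) really does control the $r$-neighborhood volume linearly in $r$ — this is a standard covering-by-cylinders argument using the finitely many Lipschitz graph pieces defining $\partial A$, but it is the only genuinely geometric input. The bounded-overlap claim is easy: the supertiles used at stage $i$ have disjoint interiors (they are tiles of the common refinement), so the multiplicity is $1$ up to boundary-sharing. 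Everything else is bookkeeping with the finitely many scaling constants $\theta_1,\dots,\theta_N$ and the finitely many prototile volumes. Minimality and ergodicity of $\mu$ are not actually needed for this lemma — the statement is a deterministic geometric fact about any $\mathcal{T}\in\Omega_x$ — but they are harmless to assume, and I would state it as above for consistency with the rest of \S\ref{sec:erg}.
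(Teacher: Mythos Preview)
Your approach is essentially identical to the paper's: the same greedy top-down decomposition, the same choice of $n(T,B)$ as the maximal level with a supertile contained in $T\cdot B$, and the same boundary-tube counting argument for (ii), which the paper carries out via the inequality $|\mathcal{R}^{(k)}_B| \leq \mathrm{Vol}(\partial_{d_+\theta_{(k+1)_x}^{-1}}(T\cdot B))/(a_- \theta_{(k)_x}^{-d})$ and then invokes the Lipschitz tube estimate (citing \cite[eq.~(6)]{BufetovSolomyak}) exactly as you describe.

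One concrete mis-step in your argument for (i): the assertion ``maximality of $n$ says no level-$(n+1)$ supertile fits, so $T\cdot B$ is contained in a bounded neighborhood of a single level-$(n+1)$ supertile'' is false --- $T\cdot B$ can straddle many level-$(n+1)$ supertiles without containing any one of them (think of a thin $B$). The correct argument, which the paper gives, uses the \emph{inscribed} ball: since $B$ has nonempty interior it contains some $B_r$, so $T\cdot B$ contains a ball of radius $Tr$; because any ball of radius $d_+\theta_{(n+1)_x}^{-1}$ already contains a level-$(n+1)$ supertile (the one containing its center), maximality of $n$ forces $Tr \le d_+\theta_{(n+1)_x}^{-1}$, whence $T \le C\theta_{(n)_x}^{-1}$ and $\mathrm{Vol}(T\cdot B)\le K_1\theta_{(n)_x}^{-d}$. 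With this fix your proof matches the paper's; your observation that minimality and ergodicity of $\mu$ play no role in this lemma is also correct.
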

Before proving the lemma, we establish an inequality related to efficient hierarchical packings of Lipshitz domains by supertiles of different orders. These types of estimates have been done elsewhere before, see for example \cite[Page 769]{BufetovSolomyak}.

For a tiling $\mathcal{T}\in \Omega_x$ and Lipshitz domain $B$, let $\mathcal{T}^{(k)}_B$ denote the set of all supertiles of order $k$ completely contained in $B$. Further, let $\mathcal{R}^{(k)}_B$ be the set of supertiles of order $k$ which belong to supertiles of order $k+1$ which \textbf{are not} completely contained in $B$. As such, the supertiles in $\mathcal{R}^{(k)}_B$ are contained in supertiles of order $k+1$ which intersect $\partial B$. Let $d_-, d_+$ be, respectively, the smallest and largest diameters of the prototiles $t_1,\dots, t_M$. Since the largest diameter of level $k+1$ supertile is $d_+ \theta_{(k)_x}^{-1}$ we have that $\mathcal{R}^{(k)}_B \subset \partial_{d_+\theta_{(k+1)_x}^{-1}}(B)$.

Let $a_-$ be the smallest of all of the volumes of the prototiles $t_1,\dots, t_M$. Then the volume of an order $k$ supertile in $\Omega_x$ is at least $a_- \theta_{(k)_x}^{-d}$ and we have that
\begin{equation}
  \label{eqn:packing}
  |\mathcal{R}^{(k)}_B| \leq \frac{\mathrm{Vol}(\partial_{d_+\theta_{(k+1)_x}^{-1}}(B))}{a_- \theta_{(k)_x}^{-d} }.
\end{equation}
\begin{proof}[Proof of Lemma \ref{lem:packing}]
  The idea here is to decompose $\mathcal{O}_\mathcal{T}^-(T\cdot B)$ into tiles of different heirarchical levels beginning from the top level $n(T,B)$ and filling it in using smaller tiles.  First we find $n = n(T,B)$, after which the first property of the decomposition will follow.

  Let $R_t>0$ be the smallest number such that in any $\mathcal{T}\in \Omega_x$, any ball of radius $R_t$ contains a tile of $\mathcal{T}$. Let $m_1\in\mathbb{N}$ be the smallest $m$ such that $B_{\theta_{max}^{-m_1}}$ contains a ball of radius $R_t$, where $\theta_{max}$ is the largest contraction constant in the family $\mathcal{F}$; thus $B_{\theta_{max}^{-m_1}}$ contains a tile of $\mathcal{T}$. For $T\geq \theta_{max}^{-m_1}$, let $n(T,B)$ be the largest $n\in\mathbb{Z}^+$ such that there exists a level-$n$ supertile $t^{(n)}$ of $\mathcal{T}$ completely contained in $T\cdot B$. So there is a finite set of level-$n$ supertiles $\{t_{j,k}^{(n)}\}$, $j = 1,\dots, M$ and $k = 1,\dots, \kappa_j^{(n)}$, where $t^{(n)}_{j,k}$ is a supertile of type $j$ such that
  $$\mathcal{T}^{(n)}_{T\cdot B}:= \bigcup_{j=1}^M\bigcup_{k=1}^{\kappa_j^{(n)}} t^{(n)}_{j,k}\subset \mathcal{O}_\mathcal{T}^-(T\cdot B),$$
where $\mathcal{T}^{(n)}_{T\cdot B}$ denotes all the supertiles of order $n$ completely contained in $T\cdot B$. So $\kappa_j^{(n)}\neq 0$ for some $j$.

  Given the definition of $R_t$, it follows that any ball of radius $\theta_{(n)_x}^{-1}\theta^{-m_1}_{max}$ contains a supertile of order $n$ for any tiling $\mathcal{T}\in\Omega_x$. Let $r_t$ be the smallest of all injectivity radii of the prototiles. Let $m_2$ be the smallest integer such that $B_{\theta_{max}^{-m_2}r_t}(y)$ contains a supertile of order $1$ for all $y\in\mathbb{R}^d$ and $\mathcal{T}\in\Omega_x$. There is a $m^-\in\mathbb{Z}$ such that $B_{\theta_{min}^{m^-}T}(y)$ contains no supertiles of order $n$ for any $y\in\mathbb{R}^d$ and $\mathcal{T}\in\Omega_x$. So we have that
  \begin{equation}
    \label{eqn:nTbounds}
    \theta_{min}^{m^-}\theta_{(n)_x}^{-1} < T < \theta_{max}^{-m_1-m_2}\theta_{(n)_x}^{-1}\hspace{.4in} \mbox{ and }\hspace{.4in}n(\theta_{max}^{-m_2} T,B) > n(T,B)
    \end{equation}
    and it follows that since $T\cdot B$ contains a supertile of order $n$ and $(\theta_{max}^{-m_2} T)\cdot B(x)$ contains a supertile of order $n+1$,
$$\mathrm{Vol}(T\cdot B) \leq \mathrm{Vol}(B)\theta_{max}^{-m_1}\theta_{(n)_x}^{-d} \theta_{max}^{-m_2}  = K_1(B,\mathcal{F})\theta_{(n)_x}^{-d},$$
from which the first property follows.

If $n(T,B)=0$, we are done. Otherwise we now look at $\mathcal{O}_\mathcal{T}^-(T\cdot B)\backslash \mathcal{T}^{(n)}_{T\cdot B}$ and look for patches corresponding to level-$n-1$ supertiles of $\mathcal{T}$ which are contained in $\mathcal{O}_\mathcal{T}^-(T\cdot B)\backslash \mathcal{T}^{(n)}_{T\cdot B}$. Let
$$\mathcal{R}^{(n-1)}_{T\cdot B}:= \bigcup_{j=1}^M\bigcup_{k=1}^{\kappa_j^{(n-1)}}t_{j,k}^{(n-1)}\subset \mathcal{O}_\mathcal{T}^-(T\cdot B)\backslash \mathcal{T}^{(n)}_{T\cdot B}$$
denote their union (it is possible that $\mathcal{R}^{(n-1)}_{T\cdot B} = \varnothing$). Proceeding recursively in this way we obtain a decomposition of $\mathcal{O}_\mathcal{T}^-(T\cdot B)$ in terms of supertiles of different orders as in (\ref{eqn:decomp3}).

We now use (\ref{eqn:packing}) to estimate the numbers $\kappa_j^{(k)}$, which are the number of supertiles of order $k$ of type $j$ used in the decomposition:
\begin{equation}
  \label{eqn:packing2}
  \begin{split}
    \sum_{j=1}^M\kappa_{j}^{(i)} &= |\mathcal{R}^{(i)}_{T\cdot B}| \leq \frac{\mathrm{Vol}(\partial_{d_+\theta_{(i+1)_x}^{-1}}(T\cdot B))}{a_- \theta_{(i)_x}^{-d} } \leq K_2 \frac{ \mathrm{Vol}(\partial T\cdot B) d_+\theta_{(i+1)_x}^{-1}  \theta_{(i)_x}^{d}}{a_-} \\
    &\leq K_2 \mathrm{Vol}(\partial T\cdot B)  \theta_{(i)_x}^{d-1} ,
  \end{split}
\end{equation}
where we implicitly used that $B$ is a Lipschitz domain in the second inequality of (\ref{eqn:packing2}) (see \cite[eq. (6)]{BufetovSolomyak}).
\end{proof}
\begin{proposition}
\label{prop:upBnd}
  Let $\mathcal{F} = \{\mathcal{F}_1,\dots, \mathcal{F}_N\}$ be a type H family, $\mu$ an ergodic, minimal $\sigma$-invariant probability measure on $\Sigma_N$, and $B\subset\mathbb{R}^d$ a compact subset with non-empty interior. For $\mu$-almost every $x$ and any $\mathcal{T}\in \Omega_x$, for $\eta_\ell\in\Delta^d_\mathcal{T}$ representing a class in $E^+_\ell(x)$ of the form given by Proposition \ref{prop:generating} we have
  $$\limsup_{T\rightarrow \infty}\frac{\log \left| \displaystyle\int_{T\cdot B} \eta_\ell  \right|}{\log T} \leq \nu_\ell,$$
  where $\nu_\ell$ is the $\ell^{th}$ normalized Lyapunov exponent of $\mu$.
\end{proposition}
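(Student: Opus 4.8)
The plan is to decompose the averaging region $T\cdot B$ into supertiles of all hierarchical levels via Lemma \ref{lem:packing}, to bound the integral of $\eta_\ell$ over a level-$i$ supertile of $\mathcal{T}$ by the cocycle norm $\|\Theta_x^{(i)}[\eta_\ell]\|$ using Lemma \ref{lem:cocycleForms}, and then to substitute the Oseledets growth rate of the renormalization cocycle together with the scaling relation (\ref{eqn:returnTimes}). Throughout I would work with a fixed $x$ in the full-measure set of Oseledets-regular points for which the conclusions of Propositions \ref{prop:generating} and \ref{prop:norm}, of Lemma \ref{lem:cocycleForms}, and the limits (\ref{eqn:Oseledets3}) and (\ref{eqn:returnTimes}) all hold, and (as in Theorem \ref{thm:main}) I take $B$ to be a good Lipschitz domain.

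First I would pass from $\int_{T\cdot B}\eta_\ell$ to $\int_{\mathcal{O}_\mathcal{T}^-(T\cdot B)}\eta_\ell$. Their difference is the integral over $T\cdot B\setminus\mathcal{O}_\mathcal{T}^-(T\cdot B)$, which is covered by those tiles of $\mathcal{T}$ meeting $\partial(T\cdot B)$ and hence sits inside a bounded neighborhood of $\partial(T\cdot B)$ of volume $O(T^{d-1})$, using that $B$ is Lipschitz; since $\big|\int_t\eta_\ell\big|$ is uniformly bounded over tiles $t$ by the explicit representation of Proposition \ref{prop:generating}, this difference is $O(T^{d-1})$. Then Lemma \ref{lem:packing} supplies, for $T$ large, an integer $n=n(T,B)$ and a decomposition of $\mathcal{O}_\mathcal{T}^-(T\cdot B)$ into level-$i$ supertiles $t^{(i)}_{j,k}$, $0\le i\le n$, with $\mathrm{Vol}(T\cdot B)\le K_1\theta_{(n)_x}^{-d}$ — so that the number of top-level ($i=n$) supertiles is $O(1)$, each having volume at least $a_-\theta_{(n)_x}^{-d}$ — and with $\sum_j\kappa_j^{(i)}=O(T^{d-1}\theta_{(i)_x}^{d-1})$ for $i<n$, the implied constant depending only on $\mathcal{F}$ and $B$.

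The key step is then the following. By Proposition \ref{prop:conj}, the level-$i$ supertiles of $\mathcal{T}$ are precisely the sets $A_{(i)_x}t$ as $t$ ranges over the tiles of $\Phi_x^{(i)}(\mathcal{T})$, so Lemma \ref{lem:cocycleForms} gives $\big|\int_s\eta_\ell\big|\le\max_{t\in\Phi_x^{(i)}(\mathcal{T})}\big|\int_{A_{(i)_x}t}\eta_\ell\big|=\big\|\Theta_x^{(i)}[\eta_\ell]\big\|$ for every level-$i$ supertile $s$. Summing over the decomposition and inserting the counts above yields
$$\left|\int_{\mathcal{O}_\mathcal{T}^-(T\cdot B)}\eta_\ell\right|\ \le\ \frac{K_1}{a_-}\,\big\|\Theta_x^{(n)}[\eta_\ell]\big\| \;+\; C\,T^{d-1}\sum_{i=0}^{n-1}\theta_{(i)_x}^{d-1}\,\big\|\Theta_x^{(i)}[\eta_\ell]\big\| .$$
Since $[\eta_\ell]$ lies in the Oseledets subspace $E_\ell(x)$, for every $\varepsilon>0$ there is $C_\varepsilon$ with $\|\Theta_x^{(i)}[\eta_\ell]\|\le C_\varepsilon e^{(\lambda_\ell+\varepsilon)i}$; by (\ref{eqn:returnTimes}), $\theta_{(i)_x}\le e^{-i(\lambda_1/d-\varepsilon)}$ for $i$ large, and since $\theta_{(n)_x}^{-1}$ is comparable to $T$ one has $n=(1+o(1))\,d\log T/\lambda_1$ as $T\to\infty$. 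The top-level term is thus $T^{\,\nu_\ell+O(\varepsilon)+o(1)}$. The $i$-th summand of the boundary sum is $O(T^{d-1}e^{i(\lambda_\ell-\frac{d-1}{d}\lambda_1+d\varepsilon)})$, and here the hypothesis $[\eta_\ell]\in E^+_\ell(x)$ enters decisively: it means $\lambda_\ell\ge\frac{d-1}{d}\lambda_1$, i.e.\ $\nu_\ell\ge d-1$, so for small $\varepsilon$ the exponent is $\ge 0$ and the sum is dominated, up to the harmless factor $n=O(\log T)$, by its $i=n$ term, which is $T^{\,d-1+(\nu_\ell-(d-1))+O(\varepsilon)+o(1)}=T^{\,\nu_\ell+O(\varepsilon)+o(1)}$. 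Combining with the $O(T^{d-1})=O(T^{\nu_\ell})$ correction from the first step gives $\big|\int_{T\cdot B}\eta_\ell\big|\le T^{\nu_\ell+O(\varepsilon)+o(1)}$; taking $\log$, dividing by $\log T$, letting $T\to\infty$ and then $\varepsilon\to0$ finishes the proof.

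The step I expect to be the main obstacle is controlling the telescoping sum over hierarchical scales: showing that it is governed by its coarsest scale is exactly where membership of $\eta_\ell$ in the rapidly expanding subspace $E^+_\ell(x)$ (that is, $\nu_\ell\ge d-1$) is needed — for a slower exponent the identical computation only produces the exponent $d-1$, which is the content of the last clause of Theorem \ref{thm:main} rather than of this proposition. A secondary but essential point is the clean translation, via Proposition \ref{prop:generating} and Lemma \ref{lem:cocycleForms}, of the cocycle norm into a maximum of integrals of $\eta_\ell$ over genuine supertiles of $\mathcal{T}$, which is what allows the combinatorial packing bounds of Lemma \ref{lem:packing} to interface with the Oseledets growth rates; the Lipschitz hypothesis on $B$ is what keeps all the boundary packing counts at order $T^{d-1}$.
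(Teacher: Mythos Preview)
Your proposal is correct and follows essentially the same route as the paper: split off the boundary layer $T\cdot B\setminus\mathcal{O}_\mathcal{T}^-(T\cdot B)$ as an $O(T^{d-1})$ term, decompose $\mathcal{O}_\mathcal{T}^-(T\cdot B)$ into supertiles via Lemma~\ref{lem:packing}, bound the integral over each level-$i$ supertile by $\|\Theta_x^{(i)}[\eta_\ell]\|$ using Lemma~\ref{lem:cocycleForms}, insert the Oseledets estimate and the scaling relation (\ref{eqn:returnTimes}), and use $\nu_\ell\ge d-1$ so that the coarsest scale dominates. The only cosmetic difference is that you isolate the top level $i=n$ and bound its count by $O(1)$ via (i) of Lemma~\ref{lem:packing}, whereas the paper folds all levels into a single sum and applies the boundary-type count uniformly; since $\mathrm{Vol}(\partial(T\cdot B))\,\theta_{(n)_x}^{d-1}$ is itself $O(1)$ by (\ref{eqn:nTbounds}), the two bookkeepings are equivalent.
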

\begin{proof}
  By Proposition \ref{prop:generating} we can choose the representative $\eta_\ell$ of a particular form, namely a linear combination of forms of the form (\ref{eqn:generators}), where $\rho$ is a bump function whose support is small enough that each $\eta_j^i$ in (\ref{eqn:generators}) consists of a bump function supported entirely inside the tile being collared to give $P_j^i$. We first decompose the integral into two integrals as
  \begin{equation}
    \label{eqn:twoInts}
    \left| \int_{T\cdot B} \eta_\ell \right|\leq \left| \int_{\mathcal{O}_\mathcal{T}^-(T\cdot B)} \eta_\ell \right| + \left| \int_{T\cdot B\backslash \mathcal{O}_\mathcal{T}^-(T\cdot B)} \eta_\ell \right| = I_1 + I_2.
 \end{equation}
 We begin with $I_1$. Using the decomposition given by Lemma \ref{lem:packing}, the relationship between supertiles $t^{(i)}_{j,k}$ of $\mathcal{T}$ and tiles of $\Phi_x^i(\mathcal{T})$ given by Proposition \ref{prop:conj}, and the expression for the norm in Lemma \ref{lem:cocycleForms}, by Oseledets theorem, given $\varepsilon > 0$, there exist $K_4,K_5, K_6$ such that
\begin{equation}
  \label{eqn:upBnd1}
    \begin{split}
      \left| \int_{\mathcal{O}_\mathcal{T}^-(T\cdot B)}\eta_\ell \right| &= \left| \sum_{i=1}^n\sum_{j=1}^M \sum_{k=1}^{\kappa_j^{(i)}} \int_{t^{(i)}_{j,k}}\eta_\ell \right| \leq  \sum_{i=1}^n\sum_{j=1}^M \sum_{k=1}^{\kappa_j^{(i)}} \left|\int_{t^{(i)}_{j,k}}\eta_\ell \right| \\
      &\leq  \sum_{i=1}^n\sum_{j=1}^M\kappa_j^{(i)}K_4\|\Theta^{(i)}_x [\eta_\ell]\|  \leq MK_5 \sum_{i=1}^n \sum_{j=1}^M \kappa_j^{(i)} e^{(\lambda_\ell+\varepsilon)i} \\
      &\leq K_6 \sum_{i=1}^n\mathrm{Vol}(\partial T\cdot B)\theta_{(i)_x}^{d-1} e^{(\lambda_\ell+\varepsilon)i},
    \end{split}
  \end{equation}
where we used (ii) of Lemma \ref{lem:packing} in the last inequality. By (\ref{eqn:Oseledets3}) and Oseledets theorem, for any $\delta>0$ there is a constant $K_7$ such that
\begin{equation}
  \label{eqn:Oseledets4}
  \theta_{(i)_x}^{d-1}\leq K_7 \mathrm{exp}\left(\left(\frac{1-d}{d}\lambda_1 + \delta \right)i\right)
  \end{equation}
for all $i>0$. Using this in (\ref{eqn:upBnd1}):
\begin{equation}
  \label{eqn:upBnd2}
    \begin{split}
      I_1 &\leq K_8 \sum_{i=1}^n\mathrm{Vol}(\partial T\cdot B)\mathrm{exp}\left(\left(\frac{1-d}{d}\lambda_1 + \delta \right)i+ (\lambda_\ell+\varepsilon)i\right) \\
      &\leq K_9 \mathrm{Vol}(\partial T\cdot B)\mathrm{exp}\left(\left(\frac{1-d}{d}\lambda_1 + \delta + \lambda_\ell+\varepsilon\right)n\right).
  \end{split}
\end{equation}
Using the bound for $\mathrm{Vol}(T\cdot B)$ given by Lemma \ref{lem:packing} and (\ref{eqn:Oseledets4}), we continue (\ref{eqn:upBnd2}):
\begin{equation}
  \label{eqn:upBnd3}
    \begin{split}
      I_1 &\leq  K_{10} \theta_{(n)_x}^{d-1}\mathrm{exp}\left(\left(\frac{1-d}{d}\lambda_1 + \delta + \lambda_\ell+\varepsilon\right)n\right) \\
      &\leq  K_{11} \mathrm{exp}\left(\left(\frac{d-1}{d}\lambda_1 + \delta \right)n\right) \mathrm{exp}\left(\left(\frac{1-d}{d}\lambda_1 + \delta + \lambda_\ell+\varepsilon\right)n\right) \\
      &=  K_{11}  \mathrm{exp}\left(\left( \lambda_\ell+\varepsilon+ 2\delta\right)n\right).
  \end{split}
\end{equation}
At this point we turn to $I_2$ in (\ref{eqn:twoInts}). The integral is over a neighborhood of the boundary. Thus  there exists a $C>0$ and a constant $K_{12}$ such that we have
$$I_2 \leq \left|\int_{\partial_C T\cdot B} \eta_\ell \right|\leq K_{12}C\mathrm{Vol}(\partial T\cdot B)\|\eta_\ell\|_\infty,$$
and, using (\ref{eqn:Oseledets3}), (\ref{eqn:returnTimes}), and (\ref{eqn:nTbounds}), for $\varepsilon>0$:
\begin{equation}
  \label{eqn:I2bound2}
  I_2 \leq K_{13}\mathrm{exp}\left( \left(\frac{d-1}{d}\lambda_1+\varepsilon\right)n\right).
  \end{equation}
Since by assumption $\eta_\ell$ represents a class in the rapidly expanding subspace, we have that $\lambda_\ell\geq\frac{d-1}{d}\lambda_1$ and therefore, comparing the bounds for $I_1$ and $I_2$, respectively in (\ref{eqn:upBnd3}) and (\ref{eqn:I2bound2}), the bound for $I_1$ dominates the bound for $I_2$, so there exists a $K_{14}>0$ such that
$$\left|\displaystyle \int_{T\cdot B}\eta_\ell \right| \leq K_{14} \mathrm{exp}\left(\left(\lambda_\ell + \varepsilon + 2\delta\right)n\right).$$
Finally, using (\ref{eqn:nTbounds}),
$$\frac{\log \left|\displaystyle \int_{T\cdot B}\eta_\ell \right| }{\log T} \leq \frac{\log K_{14} + (\lambda_\ell+\varepsilon+2\delta)n}{\log\theta_{min}^{m^-} +\log \theta_{(n)_x}}$$
which, through (\ref{eqn:returnTimes}), implies
$$\limsup_{T\rightarrow \infty}\frac{\log \left|\displaystyle \int_{T\cdot B}\eta_\ell \right| }{\log T} \leq \frac{d}{\lambda_1}(\lambda_\ell +\varepsilon+2\delta).$$
Since $\varepsilon, \delta$ are arbitrary, the result follows.
\end{proof}
\subsection{Lower Bound}
\begin{proposition}
  \label{prop:Low}
  Let $\mathcal{F} = \{\mathcal{F}_1,\dots, \mathcal{F}_N\}$ be a type H family, $\mu$ an ergodic, minimal $\sigma$-invariant probability measure on $\Sigma_N$, and $B\subset\mathbb{R}^d$ a compact subset with non-empty interior. For $\mu$-almost every $x$, every $\mathcal{T}\in \Omega_x$ and $\varepsilon>0$ there exists a compact subset $B_\varepsilon$ which is $\varepsilon$-close in the Hausdorff metric to $B$, a convergent sequence of vectors $\tau_k\in \mathbb{R}^d$ and a sequence $T_k\rightarrow \infty$ such that for any $\eta_\ell\in\Delta^d_\mathcal{T}$ representing a class in $E^+_\ell(x)$ of the type given by Proposition \ref{prop:generating} we have
  $$\limsup_{k\rightarrow \infty}\frac{\log \left| \displaystyle\int_{T_k(\tau_k+B^\varepsilon)} \eta_\ell  \right|}{\log T_k} \geq \nu_\ell,$$
    where $\nu_\ell$ is the $\ell^{th}$ normalized Lyapunov exponent of $\mu$.
\end{proposition}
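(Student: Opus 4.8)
The plan is to follow the scheme Forni used for translation flows (\cite{forni:deviation}; see also \cite[\S 5]{DHL:WindTree}): first produce a single supertile of $\mathcal{T}$ on which the chosen representative $\eta_\ell$ has an enormous integral, and then realize that supertile as the discrepancy between the ergodic integral over $T\cdot(\tau+B)$ and the one over $T\cdot(\tau+B_\varepsilon)$, where $B_\varepsilon$ is obtained from $B$ by excising a small hole shaped like a single prototile. Throughout I would fix a $\mu$-typical $x$ for which Theorem \ref{thm:oseledets}, Proposition \ref{prop:generating}, Lemma \ref{lem:cocycleForms} and the recurrence statement recorded below all hold, and a fixed $\mathcal{T}\in\Omega_x$; here $\eta_\ell$ represents a nonzero class, so $\lambda_\ell$ is its Lyapunov exponent.

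First I would use the renormalization cocycle to locate the ``bad'' supertile. By Theorem \ref{thm:oseledets}, $\frac1n\log\|\Theta_x^{(n)}[\eta_\ell]\|\to\lambda_\ell$, so $\|\Theta_x^{(n)}[\eta_\ell]\|=e^{(\lambda_\ell-\delta_n)n}$ with $\delta_n\to0$; by Lemma \ref{lem:cocycleForms} this norm equals $\max_{t\in\Phi_x^{(n)}(\mathcal{T})}\bigl|\int_{A_{(n)_x}t}\eta_\ell\bigr|$, and since each $A_{(n)_x}t$ is a level-$n$ supertile of $\mathcal{T}$, there is a level-$n$ supertile $S_n\subset\mathcal{T}$ with $\bigl|\int_{S_n}\eta_\ell\bigr|=e^{(\lambda_\ell-\delta_n)n}$. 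Each $S_n$ is, up to translation, one of the $M$ prototiles rescaled by $\theta_{(n)_x}^{-1}$, so after passing to an infinite set $\mathcal{N}_0\subset\mathbb{N}$ I may take this prototile to be a fixed $A_{j_0}$. Next, using that $\eta_\ell$ is $\mathcal{T}$-equivariant — so that $\int_S\eta_\ell$ over a level-$n$ supertile $S$ is determined, for large $n$, by the collared level-$n$ supertile containing $S$ — together with repetitivity of the tilings $\Phi_x^{(n)}(\mathcal{T})$ and Poincar\'e recurrence for $\sigma$ on $\Sigma_N$ (argued as in the proof of Proposition \ref{prop:ES}), I would pass to a further subsequence so that a maximizing $S_n$ can be chosen with position $v_n$ satisfying $|v_n|\le R_*\theta_{(n)_x}^{-1}$ for a constant $R_*$ independent of $n$.

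The bulk of the work is the geometric matching. I would fix $N_0=q_0+\rho\,A_{j_0}\subset\mathrm{int}(B)$ with $\rho>0$ small enough that $\mathrm{diam}(N_0)\le\varepsilon$ (possible as $B$ has non-empty interior), and set $B_\varepsilon^-:=B\setminus N_0$, which satisfies $d_H(B,B_\varepsilon^-)\le\varepsilon$. For $n\in\mathcal{N}_0$ I put $T_n:=\theta_{(n)_x}^{-1}/\rho$; then $T_n\cdot N_0$ is congruent to $\theta_{(n)_x}^{-1}\cdot A_{j_0}$, i.e.\ to a level-$n$ supertile of shape $A_{j_0}$, and the vector $\tau_n:=v_n/T_n-q_0$ (bounded uniformly in $n$) can be chosen so that $T_n\cdot(\tau_n+N_0)=S_n$. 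Then $T_n\cdot(\tau_n+B_\varepsilon^-)=T_n\cdot(\tau_n+B)\setminus S_n$, so $a_n:=\int_{T_n(\tau_n+B)}\eta_\ell$ and $c_n:=\int_{T_n(\tau_n+B_\varepsilon^-)}\eta_\ell$ obey $a_n-c_n=\int_{S_n}\eta_\ell$, whence $\max(|a_n|,|c_n|)\ge\frac12 e^{(\lambda_\ell-\delta_n)n}$. By the pigeonhole principle, one fixed choice $B_\varepsilon\in\{B,\,B_\varepsilon^-\}$ — both $\varepsilon$-close to $B$ in $d_H$ — will give $\bigl|\int_{T_n(\tau_n+B_\varepsilon)}\eta_\ell\bigr|\ge\frac12 e^{(\lambda_\ell-\delta_n)n}$ for $n$ in an infinite subset $\mathcal{N}_1$; since $\{\tau_n\}_{n\in\mathcal{N}_1}$ is bounded I would pass to a further subsequence $n_k$ with $\tau_{n_k}\to\tau_\infty$, and set $T_k:=T_{n_k}$, $\tau_k:=\tau_{n_k}$ (so $T_k\to\infty$, $\tau_k\to\tau_\infty$).

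Finally I would take logarithms. By (\ref{eqn:Oseledets3}), $\frac1n\log\theta_{(n)_x}^{-1}\to\lambda_1/d$, so $\log T_k=\log\theta_{(n_k)_x}^{-1}-\log\rho\sim\frac{\lambda_1}{d}n_k$, while $\log\bigl|\int_{T_k(\tau_k+B_\varepsilon)}\eta_\ell\bigr|\ge(\lambda_\ell-\delta_{n_k})n_k-\log 2$; dividing and letting $k\to\infty$ (using $\delta_{n_k}\to0$) gives $\limsup_k\frac{\log|\int_{T_k(\tau_k+B_\varepsilon)}\eta_\ell|}{\log T_k}\ge d\lambda_\ell/\lambda_1=\nu_\ell$, as required. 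The step I expect to be the main obstacle is exactly this geometric matching: for $d>1$ a rescaled copy of $B$ is in general incompatible with the supertile combinatorics of $\mathcal{T}$, so one cannot expect to detect the deviation on $T\cdot B$ itself, and the remedy of excising a single fixed prototile-shaped hole works at every scale with one and the same $B_\varepsilon$ only because level-$n$ supertiles of a given combinatorial type are precisely the $\theta_{(n)_x}^{-1}$-rescalings of that prototile. The remaining delicate point is the uniform recurrence needed to keep $\{\tau_k\}$ bounded (hence convergent along a subsequence), which I would obtain from Poincar\'e recurrence on $\Sigma_N$ as in the proof of Proposition \ref{prop:ES}.
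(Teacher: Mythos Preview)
Your approach is genuinely different from the paper's, and in one respect cleaner. The paper does not excise a hole from $B$; instead it takes $B_\varepsilon := T_*^{-1}\cdot\mathcal{O}^-_\mathcal{T}(T_*\cdot B)$ for a fixed large $T_*$, so that $T_*\cdot B_\varepsilon$ is a \emph{patch} of $\mathcal{T}$ containing at least one copy of every collared tile. Using Poincar\'e recurrence of both $x$ and the path $\bar e$, this fixed patch reappears (up to a bounded shift $\tau_k$) in each $\Phi_x^{(n_k)}(\mathcal{T})$, and the integral of $A_{(n_k)_x}^*\eta_\ell$ over it is then estimated from below by splitting the collared-tile types into those that contribute on the order of the norm and those that are negligible. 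Your difference trick---comparing $\int_{T_n(\tau_n+B)}\eta_\ell$ with $\int_{T_n(\tau_n+B_\varepsilon^-)}\eta_\ell$ so that the discrepancy is a single supertile integral---sidesteps the sign-cancellation issue that arises when one tries to bound from below a \emph{sum} of supertile integrals over several collared types, and the recurrence you need (a maximizing collared type occurring within a uniformly bounded window of the origin in $\Phi_x^{(n_k)}(\mathcal{T})$) follows from the same recurrence of $(x,\bar e)$ that the paper uses.

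The one genuine discrepancy is a quantifier issue. As stated, the proposition asserts: \emph{there exist} $B_\varepsilon$, $(\tau_k)$, $(T_k)$ such that \emph{for every} representative $\eta_\ell$ of the specified type the inequality holds. Your $B_\varepsilon$ depends on $\eta_\ell$ in two ways---the choice of prototile $A_{j_0}$ (hence the shape of the hole $N_0$) comes from the maximizing supertile for $\eta_\ell$, and the pigeonhole between $B$ and $B_\varepsilon^-$ is decided by which of $|a_n|,|c_n|$ is large infinitely often for that particular $\eta_\ell$. So you prove the weaker statement with the existential and universal quantifiers swapped. This weaker statement is exactly what is used in the proof of Theorem~\ref{thm:main} (where one applies the lower bound to a single $\eta_j$ determined by $f$), so your argument still closes the main result; but it does not establish the proposition as written, whereas the paper's construction of $B_\varepsilon$ as a rescaled full-collared-tile patch is manifestly independent of $\ell$.
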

\begin{proof}
  The set of points $x\in \Sigma_N$ for which $\mathcal{B}_x$ is minimal, satisfy Poincar\'e recurrence, Proposition \ref{prop:generating} and are Oseledets regular has full measure. Let $x$ be one such point, pick $\mathcal{T}\in\Omega_x$, and let $\bar{e}\in X_{\mathcal{B}_x}$ be such that $\Delta_x(\bar{e}) = \mathcal{T}$. Let $n_k\rightarrow \infty$ be a subsequence of times such that:
  \begin{itemize}
  \item $\mathcal{B}_x$ and $\mathcal{B}_{\sigma^{n_k}(x)}$ agree on levels indexed by $i$ with $|i|\leq k$;
    \item $\bar{e}\in \mathcal{B}_{x}$ and $\sigma^{n_k}(\bar{e})\in \mathcal{B}_{\sigma^{n_k}(x)}$ agree on all entries indexed by $i$ with $|i|\leq k$.
  \end{itemize}
That such subsequence exists follows from the fact that $x$ is Poincar\'e recurrent and that tiling spaces are compact.

Let $R_{\mathcal{F},1}$ denote the circumscribing radius of the prototiles $\{t_z\}$. That is, $R_{\mathcal{F},1}$ is the infimum of all $R$ such that for all $z\in\{1,\dots, M\}$ a ball of radius $R$ contains an isometric copy of the prototile $t_z$. By minimality, there exists a $R_{\mathcal{F},2}>0$ such that for any $\mathcal{T}\in\Omega_x$, $B_{R_{\mathcal{F},2}}(y)$ contains a copy of every collared tile for any $y\in\mathbb{R}^d$. Given $\varepsilon>0$ there exists an $T_\varepsilon\geq 0$ such that
\begin{itemize}
\item $T^{-1} \cdot \mathcal{O}_\mathcal{T}^-(T\cdot B)$ is $\varepsilon$-close to $B$ in the Hausdorff metric,
\item $ \mathcal{O}_\mathcal{T}^-(T \cdot B)$ contains a ball of radius $2\max \{\mathcal{R}_{\mathcal{F},1}, \mathcal{R}_{\mathcal{F},2}\}$
\end{itemize}
for all $T\geq T_\varepsilon$. Fix some $T_*\geq T_\varepsilon$ and define
\begin{equation}
  \label{eqn:Bepsilon}
  B_{\varepsilon}:= T^{-1}_*\cdot \mathcal{O}_\mathcal{T}^-(T_*\cdot B)\hspace{.7in}\mbox{ and }\hspace{.7in}\mathcal{P}_{x,\varepsilon, \mathcal{T}}:= T_*\cdot  B_{\varepsilon},
  \end{equation}
and note that $\mathcal{P}_{x,\varepsilon, \mathcal{T}}$ is a a patch of $\mathcal{T}$ which, by construction, contains a copy of every collared tile. Then there exists a $k_\varepsilon$, a finite set of paths $E_{\varepsilon,\mathcal{T}}\subset E_{\mathcal{V}_0,\mathcal{V}_{k_\varepsilon}}$ and a vector $\tau$ such that the patch $\mathcal{P}_{x,\varepsilon, \mathcal{T}}$ admits the decomposition
\begin{equation}
  \label{eqn:patch1}
  \mathcal{P}_{x,\varepsilon, \mathcal{T}} = \varphi_\tau\left(\bigcup_{\bar{e}'\in E_{\varepsilon,\mathcal{T}}} f_{\bar{e}|_{n_\varepsilon}}^{-1}\circ f_{\bar{e}'}(A_{s(\bar{e}')})\right),
  \end{equation}
where the vector $\tau$ is completely determined by the negative part of $\bar{e}$.  Note that by recurrence of $\Omega_x$ and the convergence $\sigma^{n_k}(\mathcal{T})\rightarrow \mathcal{T}$ for all $k$ large enough, there exists a vector $\tau_k\in\mathbb{R}^d$ such that $\tau_k+\mathcal{P}_{x,\varepsilon, \mathcal{T}}$ is a patch in $\Phi_x^{(n_k)}(\mathcal{T})$. By minimality/repetitivity there exists a compact set $\mathcal{K}\subset\mathbb{R}^d$ such that we can take all $\tau_k$ from $\mathcal{K}$, that is, $\tau_k\in\mathcal{K}$ for all $k$ large enough. By passing to a subsequence, we may assume that the sequence of vectors $\tau_k$ is convergent.

We now make an explicit decomposition of the patches $\tau_k+\mathcal{P}_{x,\varepsilon, \mathcal{T}} \subset\Phi_x^{(n_k)}(\mathcal{T})$. Since $x$ is Poincar\'e recurrent, for all $k$ large enough we have that $AP(\Omega_{\sigma^{n_k}(x)}) = AP(\Omega_x)$. Thus, for all $k$ large enough the set of collared tiles of $\Phi_x^{(n_k)}(\mathcal{T})$ is some fixed set $\{t^x_1,\dots, t_{\tilde{n}}^x\}$. We can decompose the patches $\tau_k + \mathcal{P}_{x,\varepsilon, \mathcal{T}}$ as
\begin{equation}
  \label{eqn:decomp2}
\tau_k + \mathcal{P}_{x,\varepsilon, \mathcal{T}} = \bigcup_{z=1}^{\tilde{n}}\bigcup_{y=1}^{s(z)} t_{z,y}^k
\end{equation}
as a patch in $\Phi_x^{(n_k)}(\mathcal{T})$, where, for each $k$, $\{t_{z,y}^k\}_{z,y}$ is a finite collection of tiles of $\Phi_x^{(n_k)}(\mathcal{T})$. This decomposition breaks down the set $\tau_k + \mathcal{P}_{x,\varepsilon, \mathcal{T}}$ as a finite union of $s(z)$ copies of each collared tile $t_z^x$. Since $\mathcal{P}_{x,\varepsilon, \mathcal{T}}$ contains a ball of radius $2\mathcal{R}_{\mathcal{F},2}$, it contains a copy of every prototile, so $s(z)\geq 1$ for all $z$.

  By Proposition \ref{prop:generating} we can choose the representative $\eta_\ell\in\Delta_\mathcal{T}^d$ of a particular form, namely a linear combination of forms of the form (\ref{eqn:generators}), where $\rho$ is a bump function whose support is small enough that each $\eta_j^i$ in (\ref{eqn:generators}) consists of a bump function supported entirely inside the tile being collared to give $P_j^i$.
For such a representative $\eta_\ell$ of a class in $E^+_\ell(x)$, we partition the set of indices $\{1,\dots ,\tilde{n}\}$ of the collared tiles into $I^+_\ell$ and $I^\circ_\ell$ as follows. An index $z\in I^+_\ell$ if and only if
  \begin{equation}
    \label{eqn:limsup1}
    \limsup_{k\rightarrow \infty}\frac{\left|\displaystyle\int_{t_{z,1}^k}A_{(n_k)_x}^* \eta_\ell   \right|}{\left\|\left[ A^*_{(n_k)_x}\eta_\ell\right]\right\|}>0,
  \end{equation}
  and is in $I^\circ_\ell$ if the limsup in (\ref{eqn:limsup1}) is zero. The tiles $t_{z,1}^k$ over which we integrate in (\ref{eqn:limsup1}) are the tiles from the decomposition (\ref{eqn:decomp2}).
  \begin{lemma}
    $I^+_\ell\neq \varnothing$.
  \end{lemma}

  \begin{proof}
    By Lemma \ref{lem:cocycleForms} we have
    $$\left\|\left[ A^*_{(n_k)_x}\eta_\ell\right]\right\| = \max_{t\in\Phi_x^{(n_k)}(\mathcal{T})}\left| \int_t A^*_{(n_k)_x}\eta_\ell \right|.$$
Thus $z\in I^+_\ell$ if and only if
\begin{equation}
\label{eqn:limsup2}
\limsup_{k\rightarrow\infty}\frac{\left|\displaystyle\int_{t_{z,1}^k}A_{(n_k)_x}^* \eta_\ell   \right|}{\displaystyle\max_{t\in\Phi_x^{(n_k)}(\mathcal{T})}\left| \displaystyle\int_t A^*_{(n_k)_x}\eta_\ell \right|}>0.
  \end{equation}
Note that the $\max$ over which the norm in Lemma \ref{lem:cocycleForms} is taken is not a $\sup$ because all the possible values of the integrals (under the hypotheses of Lemma \ref{lem:cocycleForms}) are given by integrating over all possible collared tiles of $\Phi_x^{(n_k)}(\mathcal{T})$. Thus the $\max$ is taken from all possible values given by the integrals over all possible collared tiles. Since for every $k$ the collection $\{t_{z,1}^k\}_z$ has at least one representative of each collared tile of $\Phi_x^{(n_k)}(\mathcal{T})$, then (\ref{eqn:limsup2}) holds for some $z$, so $I^+_\ell\neq 0$.
  \end{proof}
Note that by our choice of representative $\eta_\ell$ we have that for any $z\in \{1,\dots, \tilde{n}\}$ and any two $y,y'\in \{1,\dots, s(z)\}$ we have that
  $$    \limsup_{k\rightarrow \infty}\frac{\left|\displaystyle\int_{t_{z,y}^k}A_{(n_k)_x}^* \eta_\ell   \right|}{\left\|\left[ A^*_{(n_k)_x}\eta_\ell\right]\right\|} = \limsup_{k\rightarrow \infty}\frac{\left|\displaystyle\int_{t_{z,y'}^k}A_{(n_k)_x}^* \eta_\ell   \right|}{\left\|\left[ A^*_{(n_k)_x}\eta_\ell\right]\right\|}$$
since the integrals in the numerators only depend on the the collared type of a tile, which is the same for $t_{z,y}^k$ and $t_{z,y'}^k$.

Recall $s(z)$ denotes the number of copies of the collared tile $t_z^x$ found inside the set $\tau_k + \mathcal{P}_{x,\varepsilon, \mathcal{T}}$ defined in (\ref{eqn:decomp2}) (this is independent of $k$) and let $\kappa^\circ(B,x,\varepsilon)$ be the maximum of $\{s(z)\}$ for $z\in I^\circ_\ell$. Let $C$ be half of the smallest positive limsup in (\ref{eqn:limsup1}) for some $(z,y)\in I^+_\ell$. Since the limsup in (\ref{eqn:limsup1}) vanishes for $(z,y)\in I^\circ_\ell$, we have that for $k$ large enough,
  $$  \frac{\left|\displaystyle\int_{A_{(n_k)_x}t_{z,y}^k}\eta_\ell \right|}{\left\|\Theta_x^{(n_k)}[\eta_\ell]\right\|}\leq \frac{C|I^+_\ell|}{2|I^\circ_\ell|\kappa^\circ(B,x,\varepsilon)},\hspace{.2in}  \mbox{ whereas for $(z,y)\in I^+_\ell$ we have} \hspace{.35in} \frac{\left|\displaystyle\int_{A_{(n_k)_x}t_{z,y}^k}\eta_\ell \right|}{\left\|\Theta_x^{(n_k)}[\eta_\ell]\right\|}\geq C.$$
Now, by (\ref{eqn:decomp2}):
  \begin{equation}
    \label{eqn:lowBnd1}
\begin{split}
  \int_{A_{(n_k)_x}(\tau_k + \mathcal{P}_{x,\varepsilon, \mathcal{T}})} \eta_\ell &= \int_{\tau_k + \mathcal{P}_{x,\varepsilon, \mathcal{T}}} A_{(n_k)_x}^* \eta_\ell \\
  &= \sum_{z\in I^\circ_\ell} s(z) \int_{t_{z,1}^k}A_{(n_k)_x}^* \eta_\ell + \sum_{z\in I^+_\ell} s(z) \int_{t_{z,1}^k} A_{(n_k)_x}^* \eta_\ell.
  \end{split}
  \end{equation}
  Rearranging the terms in (\ref{eqn:lowBnd1}) and using the triangle inequality, for all $\ell$ large enough,
\begin{equation*}
    \label{eqn:lowBnd2}
\begin{split}
  \left| \int_{A_{(n_k)_x}(\tau_k + \mathcal{P}_{x,\varepsilon, \mathcal{T}})} \eta_\ell\right| &\geq  \left| \sum_{z\in I^+_\ell} s(z) \int_{t_{z,1}^k} A_{(n_k)_x}^* \eta_\ell \right| - \left| \sum_{z\in I^\circ_\ell} s(z) \int_{t_{z,1}^k}A_{(n_k)_x}^* \eta_\ell \right| \\
  &\geq C |I^+_\ell| \left\|\Theta^{(n_k)}_x [\eta_\ell]\right\| - \frac{C|I^+_\ell|}{2\kappa^\circ(B,x,\varepsilon)|I^\circ_\ell|} \kappa^\circ(B,x,\varepsilon)|I^\circ_\ell|\left\|\Theta^{(n_k)}_x [\eta_\ell]\right\| \\
  &= \frac{C|I^+_\ell|}{2}\left\|\Theta^{(n_k)}_x [\eta_\ell]\right\|.
  \end{split}
\end{equation*}
Note that by our choice of $B_\varepsilon$ in (\ref{eqn:Bepsilon}) we can write
$$A_{(n_k)_x}(\tau_k + \mathcal{P}_{x,\varepsilon, \mathcal{T}}) = A_{(n_k)_x}(T_*\cdot \mathrm{Id})(T_*^{-1}\tau_k + B_\varepsilon) = T_k\cdot (\bar{\tau}_k + B_\varepsilon),$$
where $\bar{\tau}_k = T^{-1}_*\tau_k$ and $T_k = T_*\theta_{(n_k)_x}^{-1}$. Finally, since $\eta_\ell$ represents a class in the Oseledets subspace $E^+_\ell(x)$, by (\ref{eqn:cocycles})-(\ref{eqn:returnTimes}),
\begin{equation*}
  \label{eqn:lowBound}
  \begin{split}
    \limsup_{k\rightarrow \infty}\frac{\log \left| \displaystyle\int_{T_k\cdot (\bar{\tau}_k + B_\varepsilon)} \eta_\ell  \right|}{\log T_k} &\geq \limsup_{k\rightarrow \infty}\frac{\log\left(\frac{C|I^+_\ell|}{2} \left\|\Theta_{x}^{(n_k)}[\eta_\ell]\right\|\right)}{\log T_{k}} \\
    &= \limsup_{k\rightarrow \infty}\frac{n_k}{\log T_{k}}\frac{\log \left\|\Theta_{x}^{(n_k)}[\eta_\ell]\right\|}{n_k}= d\frac{\lambda_\ell}{\lambda_1}=\nu_\ell.
  \end{split}
\end{equation*}

%
\end{proof}
\subsection{Proof of Theorem \ref{thm:main}}
  Let $\mathcal{F} = \{\mathcal{F}_1,\dots, \mathcal{F}_N\}$ be a type H family, $\mu$ a minimal $\sigma$-invariant ergodic probability measure on $\Sigma_N$. For an Oseledets regular $x\in\Sigma_N$ for the renormalization cocycle, and any $\mathcal{T}\in\Omega_x$ we pick a basis $\{[\eta_{1}],\dots, [\eta_{r_\mu}]\}$, where each class $[\eta_i]$ spans the Oseledets subspace $E_i(x)$ associated to the Lyapunov exponent $\lambda_i$ and is represented by the form $\eta_{i}\in \Delta_\mathcal{T}^d$ of the type given by Proposition \ref{prop:generating}, and we have ordered the spectrum such that $\lambda_1\geq \lambda_2\geq \cdots \geq \lambda_{r_\mu}$. Given $f\in C^\infty_{tlc}(\Omega_x)$, let $\bar{f} = i_\mathcal{T}f\in \Delta_\mathcal{T}^0$, which allows us to write it as
$$\bar{f} = \sum_{i=1}^{r_\mu}  \alpha_{i}(f) g_{i} + e_f,$$
since one initially has
$$\star \bar{f}  = \sum_{i=1}^{r_\mu}  \alpha_{i}(f) \eta_{i} + d\omega_f,$$
where $g_{i} = \star \eta_{i}$,  $e_f = d\omega_f/(\star 1)$ and $\alpha_{i}(f)$ are the components of the class $[\star \bar{f}]$ in the Oseledets space $E_{i}(x)$. Note that for any good Lipschitz domain $B$ one has
$$\left| \int_{T\cdot B}d\omega_f\right| = \left| \int_{\partial (T\cdot B)}\omega_f\right| \leq \|\omega_f\|_{\infty}\mathrm{Vol}_{d-1}(\partial T\cdot B)\leq K_f T^{d-1}$$
for some $K_f>0$. As a result, the contributions to the ergodic integral of $f$ will primarily come from the forms $\eta_i$ representing classes in the rapidly expanding subspace.

Define the distributions $\mathcal{D}_{i}$ as $\mathcal{D}_{i} = \alpha_{i}$ and denote by $\rho_\mu = \mathrm{dim}\,E^+_x$ the dimension of the rapidly expanding subspace, which is constant $\mu$-almost everywhere. Let $ f\in\Delta_\mathcal{T}^d $ and suppose $\alpha_i(f) = 0$ for all $i=1,\dots, j-1<\rho_\mu$ but $\alpha_j(f)\neq 0$. Then the decomposition of $\bar{f}$ reads
$$\bar{f} = \sum_{i=j}^{r_\mu}  \alpha_{i}(f) g_{i} + e_f.$$
For $B$ a good Lipschitz domain, Proposition \ref{prop:upBnd} gives
$$\limsup_{T\rightarrow \infty}\frac{\log \left|\displaystyle \int_{T\cdot B} f\circ\varphi_t(\mathcal{T})\, dt \right|}{\log T} = \limsup_{T\rightarrow \infty}\frac{\log \left|\displaystyle \int_{T\cdot B}\star \bar{f} \right|}{\log T} = \limsup_{T\rightarrow \infty}\frac{\log \left|\displaystyle \int_{T\cdot B}\eta_j \right|}{\log T} \leq \nu_j.$$
For any $\varepsilon>0$, let $B_\varepsilon$ be the good Lipschitz domain which is given by the proof of Proposition \ref{prop:Low}, along with the converging sequence of vectors $\{\tau_k\}$ and times $T_k\rightarrow \infty$. Proposition \ref{prop:Low} then gives
$$\limsup_{k\rightarrow \infty}\frac{\log \left|\displaystyle \int_{T_k\cdot(\tau_k+ B_\varepsilon)} f\circ\varphi_t(\mathcal{T})\, dt \right|}{\log T_k} = \limsup_{k\rightarrow \infty}\frac{\log \left|\displaystyle \int_{T_k\cdot(\tau_k+ B_\varepsilon)}\star \bar{f} \right|}{\log T_k} = \limsup_{k\rightarrow \infty}\frac{\log \left|\displaystyle \int_{T_k\cdot(\tau_k+ B_\varepsilon)}\eta_j \right|}{\log T_k} \geq \nu_j.$$
If $\alpha_i(f)=0$ for all $i=1,\dots, \rho_\mu$, then the boundary has the dominant effect. Indeed, in the decomposition (\ref{eqn:twoInts}) used for the upper bound, we showed how the growth of $I_1$ is controlled by the Lyapunov exponent in (\ref{eqn:upBnd3}) whereas the growth of $I_2$ is bounded by the growth of volume of $\partial T\cdot B$. Thus, it follows from (\ref{eqn:nTbounds}) and (\ref{eqn:I2bound2}) that
$$\limsup_{T\rightarrow \infty}\frac{\log \left|\displaystyle \int_{T\cdot B} f\circ\varphi_t(\mathcal{T})\, dt \right|}{\log T} = \limsup_{T\rightarrow \infty}\frac{\log \left|\displaystyle \int_{T\cdot B}\star \bar{f} \right|}{\log T} = \limsup_{T\rightarrow \infty}\frac{\log \left|\displaystyle \int_{T\cdot B}\eta_j \right|}{\log T} \leq d-1.$$

\bibliographystyle{amsalpha}
\bibliography{biblio}

\end{document}